\theoremstyle{plain} 
\newtheorem{thm}{Theorem}[section] 
\newtheorem{theorem}[thm]{Theorem} 
\newtheorem{corollary}[thm]{Corollary} 
\newtheorem{lemma}[thm]{Lemma} 
\newtheorem{proposition}[thm]{Proposition} 
\theoremstyle{remark}
\theoremstyle{definition} 
\newtheorem{definition}[thm]{Definition} 
\def\de{{\delta}}
\def\Om{{\Omega}}
\def\la{{\lambda}}
\def\Si{{\Sigma}}
\def\epsilon{{\varepsilon}}
\def\ep{{\varepsilon}}
\def\phi{{\varphi}}
\DeclareMathAlphabet{\doba}{U}{msb}{m}{n} 
\gdef\mC{\doba{C}}
\gdef\mN{\doba{N}}
\gdef\mR{\doba{R}}
\def\vol{{\mathop{\rm vol}}}
\let\scal\Scal
\newcommand{\definedas}{\mathrel{\raise.095ex\hbox{\rm :}\mkern-5.2mu=}}
\newcounter{mnotecount}[section]
\def\dist{\mathrm{d}}
\def\supp{\mathrm{supp}}
\def\diff{d}
\begin{document}

\title{About the mass of certain second order elliptic operators}
\author{Andreas Hermann}
\address{Andreas Hermann, LMPT \\
  Universit\'e de Tours,\\
Parc de Grandmont \\
   37200 Tours \\
   France} 
\email{andreas.hermann@lmpt.univ-tours.fr} 

\author{Emmanuel Humbert}
\address{Emmanuel Humbert, LMPT \\
Universit\'e de Tours,\\
Parc de Grandmont \\
   37200 Tours \\
   France} 
\email{emmanuel.humbert@lmpt.univ-tours.fr}

\begin{abstract}
Let $(M,g)$ be a closed Riemannian manifold of 
dimension $n \geq 3$ and let 
$f\in C^{\infty}(M)$, such that the 
operator $P_f:= \Delta_g+f$ is positive. If $g$ is flat near some point $p$ and $f$ vanishes around $p$, we can define the mass of $P_f$ as the constant term in the expansion of the Green function of $P_f$ at $p$. In this paper, we establish many results on the mass of such operators. In particular, if $f:= \frac{n-2}{4(n-1)} \scal_g$, i.e. if $P_f$ is the Yamabe operator, we show the following result: assume that there exists a closed simply connected non-spin manifold $M$ such that the mass is non-negative for every metric $g$ as above on $M$, then the mass is non-negative for every such metric on every closed manifold of the same dimension as $M$.   
\end{abstract}

\maketitle

\tableofcontents


\section{Introduction} 

Let $(M,g)$ be a closed Riemannian manifold of dimension $n \geq 3$, let $p\in M$ and assume that 
$g$ is flat on an  open neighborhood $U$ of $p$. 
Let $f\in C^{\infty}(M)$ such that $f\equiv 0$ 
on $U$.  
Then, a {\em Green function} of 
$P_f:=\Delta_g+f$ at $p$ is a function 
$G_f \in L^1(M) \cap C^{\infty}( M \setminus\{p\})$ 
such that in the sense of distributions 
\begin{equation} 
 P_f G_f = \delta_p
\end{equation}
where $\delta_p$ is the Dirac distribution at $p$. 
It is well known that
 
\begin{proposition} 
Assume that all eigenvalues of 
the operator $P_f$ are positive. 
Then, there exists a unique Green function $G_f$ 
for $P_f$ at $p$. 
Moreover, $G_f$ is strictly positive on 
$M\setminus\{p\}$ and 
has the following expansion at $p$:
\begin{equation} 
G_f = \frac{1}{(n-2)\omega_{n-1} r^{n-2}} + m_f + o(1)
\end{equation}
where $r:=\dist_g(p,\cdot)$ is the distance function to 
$p$, where $\omega_{n-1}$ is the volume of the 
standard $(n-1)$-sphere and where $m_f$ is a number 
called the {\em mass of $P_f$ at the point $p$}. 
\end{proposition}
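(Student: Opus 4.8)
The plan is to construct $G_f$ from a local parametrix built out of the Euclidean fundamental solution, and then to deduce uniqueness, positivity and the expansion from the construction and from the positivity of the spectrum. First I would pick a cutoff $\eta\in C^\infty(M)$ supported in $U$ with $\eta\equiv 1$ on a smaller neighborhood of $p$, and set $G_0\definedas \eta\,\big((n-2)\omega_{n-1}r^{n-2}\big)^{-1}$. Since $g$ is flat on $U$, in the linear coordinates centered at $p$ induced by $g$ the function $r$ is the Euclidean norm and $\Delta_g$ is the Euclidean Laplacian, so $\big((n-2)\omega_{n-1}r^{n-2}\big)^{-1}$ is the Euclidean fundamental solution; as moreover $f\equiv 0$ on $U$, this yields $P_fG_0=\delta_p$ in a neighborhood of $p$. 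Away from $p$ the function $G_0$ is smooth, so $w\definedas P_fG_0-\delta_p$ is a smooth function on all of $M$. Since all eigenvalues of the self-adjoint operator $P_f$ are positive, $P_f\colon C^\infty(M)\to C^\infty(M)$ is bijective, so there is a unique $v\in C^\infty(M)$ with $P_fv=w$; then $G_f\definedas G_0-v$ satisfies $P_fG_f=\delta_p$, lies in $C^\infty(M\setminus\{p\})$, and belongs to $L^1(M)$ because $r^{-(n-2)}$ is integrable near $p$ in dimension $n\ge 3$. This gives existence.

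Next I would treat uniqueness and the expansion, both of which are short. If $G_f$ and $\tilde G_f$ are two Green functions, then $h\definedas G_f-\tilde G_f\in L^1(M)$ satisfies $P_fh=0$ in the distributional sense on all of $M$; elliptic regularity upgrades $h$ to an element of $C^\infty(M)$, and since $0$ is not an eigenvalue of $P_f$ we get $h\equiv 0$. For the expansion, on the neighborhood of $p$ where $\eta\equiv 1$ one has $G_f=\big((n-2)\omega_{n-1}r^{n-2}\big)^{-1}-v$ with $v\in C^\infty(M)$, so $v=v(p)+O(r)$ as $r\to 0$; hence the claimed expansion holds with $m_f=-v(p)$, with a remainder that is in fact $O(r)$.

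The remaining point, strict positivity of $G_f$ on $M\setminus\{p\}$, I would do in two steps. First, $G_f\ge 0$: near $p$ we have $G_f\to+\infty$, so $\Omega\definedas\{G_f<0\}$ is an open set whose closure avoids $p$, on which $P_fG_f=0$ and on whose boundary $G_f$ vanishes; then the restriction of $G_f$ to $\Omega$ lies in $H^1_0(\Omega)$ and would be an eigenfunction of $P_f$ with Dirichlet boundary conditions on $\Omega$ for the eigenvalue $0$, contradicting the fact that the bottom of the Dirichlet spectrum of $P_f$ on $\Omega$ is at least the smallest eigenvalue of $P_f$ on $M$, which is positive; hence $\Omega=\emptyset$. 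Second, since $G_f\ge 0$ solves $\Delta_gG_f+fG_f=0$ on $M\setminus\{p\}$, the strong maximum principle — valid for $\Delta_g+f$ regardless of the sign of $f$, since a zero of a non-negative solution is an interior minimum at which the zeroth-order term is harmless — shows that if $G_f$ vanished at any point of $M\setminus\{p\}$ then $G_f$ would vanish on a non-empty open set, hence on all of $M$ by connectedness, contradicting the blow-up at $p$. I expect this positivity step to be the delicate part, principally in justifying rigorously that $G_f|_\Omega\in H^1_0(\Omega)$ for the possibly irregular set $\Omega$ and that the Dirichlet-eigenvalue comparison applies; an alternative that sidesteps this is to note that the heat semigroup $e^{-tP_f}$ is positivity preserving (maximum principle for $\pa_t+P_f$), so that $P_f^{-1}=\int_0^\infty e^{-tP_f}\,dt$ is positivity improving and $G_f=P_f^{-1}\delta_p$ is therefore strictly positive away from $p$.
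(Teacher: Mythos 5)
Your construction is essentially the paper's: the paper likewise builds $G_f$ as $\eta r^{2-n}$ minus $P_f^{-1}$ applied to the smooth error $F_{\eta}=\Delta_g(\eta r^{2-n})$, proves uniqueness by elliptic regularity plus invertibility of $P_f$, and reads off the expansion (with $m_f$ the value of the smooth correction at $p$) exactly as you do. The one part the paper does not prove is strict positivity, which it calls classical and omits; your supplied argument is sound, and the $H^1_0(\Omega)$ issue you flag can be sidestepped by testing $P_fG_f=0$ against $\min(G_f,0)$ extended by zero, which lies in $H^{1,2}(M)$ and is supported away from $p$, or by your heat-semigroup alternative.
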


Considering the importance of this proposition for 
this paper, we give the proof in Section 
\ref{prelimin}.
These objects play a crucial role in many problems 
of geometric analysis in which blowing-up 
sequences of functions behave like Green function. 
The most famous one is maybe the Yamabe problem 
which consists in finding a metric with constant 
scalar curvature in a given conformal class. 
After Yamabe, Trudinger and Aubin had found 
a solution to this problem in some special cases, 
the remaining cases were solved by Schoen
in 1984 with a test function argument in which he 
used the Green function of the 
{\em conformal Laplacian} or {\em Yamabe operator} 
\begin{displaymath}
L_g:= \Delta_g + \frac{n-2}{4(n-1)} \scal_g.
\end{displaymath}
We give more information on the operator $L_g$ in Paragraph \ref{yamabeop}. With the notation above, $L_g= P_{\frac{n-2}{4(n-1)} \scal_g}$. Schoen could show that the positivity of  the number $m_{\frac{n-2}{4(n-1)} \scal_g}$ allows to solve the Yamabe problem. 
To prove this last step, he showed that $m_{\frac{n-2}{4(n-1)} \scal_g}$ can be interpreted as the ADM mass of an asymptotically flat manifold, which is regarded as the energy of an isolated system in general relativity and which can be proved to be positive in this context. 
Even if this interpretation is really specific to $m_{\frac{n-2}{4(n-1)} \scal_g}$, the number $m_f$ for a more general $f$ is now called {\em mass of the operator $P_f$}. For more information on the Yamabe problem, we refer the reader for instance to \cite{lee.parker:87}. 

\noindent At a first glance, we could think from the definition that the mass $m_f$ only depends on the local geometry around $p$. Unfortunately, this is not true which makes its study very difficult. 
In particular, the question of whether $m_{\frac{n-2}{4(n-1)} \scal_g} \geq 0$ with equality if and only $(M,g)$ is conformally equivalent to the standard sphere is still open in full generality. It is proven only in some particular cases, including the context of Yamabe problem (i.e. when $(M,g)$ is locally conformally flat, see \cite{schoen.yau:88}) and the case of spin manifolds, solved by Witten in \cite{witten:81}.  \\

\noindent The first result of this paper is Theorem \ref{maintheorem} in which we show that $- m_f$ can be expressed as the minimum of a functional. Note that Hebey and Vaugon \cite{hebey.vaugon:91} have already proved a variational characterization of the mass $m_{\frac{n-2}{4(n-1)} \scal_g}$ but their approach is different giving rise to different applications.  
We then exhibit four short applications of Theorem \ref{maintheorem}: 
\begin{itemize} 
\item We first give an alternative proof of the positive mass theorem on spin manifolds. This proof is not simpler than the one of Ammann-Humbert \cite{ammann.humbert:05} but has the advantage to enlighten the ingredients which make the proof work. 
\item We prove in a very simple way a generalization of a result of Beig and O'Murchadha who proved in \cite{beig.omurchada:97} that near a metric of zero Yamabe constant, the mass $ m_{\frac{n-2}{4(n-1)} \scal_g}$ is arbitrarily large. 
\item We prove that on every manifold, we can find many non-negative functions~$f$ for which $m_f$ 
is negative.
\item We prove that the positivity of   
$m_{\frac{n-2}{4(n-1)} \scal_g}$ is preserved by surgery (see Section \ref{section_surgery_positivity} for a precise statement).  \end{itemize}
 
\noindent These facts could also be proven directly but  Theorem \ref{maintheorem} is nevertheless interesting for many reasons: 
\begin{itemize}
\item The variational characterization is really easy to manipulate and helps a lot to simplify the proofs. For instance, the mass-to-infinity Theorem \ref{massinfty} becomes almost obvious with this approach.
\item Theorem \ref{maintheorem} makes it easy to have a good intuition without any computation of what is true or not, as can be seen for example in Section 
\ref{section_surgery_positivity} about the preservation of the positivity of mass by surgery. 
\item Theorem \ref{maintheorem} clarifies the situation a lot: this is particularly true for the proof of the positive mass theorem on spin manifolds (see Section \ref{section_pmt_spin}).
\end{itemize}

After these applications we prove that also the 
negativity of $m_{\frac{n-2}{4(n-1)}\scal_g}$ is 
preserved by surgery (see Section 
\ref{section_surgery_preservation} for a precise 
statement). 
The proof is more difficult than the proof of the 
preservation of the positivity of 
$m_{\frac{n-2}{4(n-1)}\scal_g}$ and uses 
Theorem \ref{maintheorem} together with some 
techniques developed in the article 
\cite{ammann.dahl.humbert:13}. \\

\noindent As explained above, the question of whether $m_{\frac{n-2}{4(n-1)} \scal_g} \geq 0$ with equality if and only if $(M,g)$ is conformally equivalent to the standard sphere is still open. It is known as the {\em positive mass conjecture (weak version)} and is a particular case of the standard {\em positive mass conjecture} which says that the ADM mass of an asymptotically flat manifold with non-negative and integrable scalar curvature must be non-negative and vanishes if and only if the manifold is $\mR^n$ equipped with the flat metric. It turns out that both versions of the positive  mass conjecture are actually equivalent: see Proposition 4.1 in \cite{schoen:89} or Section 5 in \cite{lohkamp:99} (this could also be proved using Theorem \ref{maintheorem} but the proof is not really simpler and not instructive so we omit it in this paper). The positive mass conjecture is proved when $n \leq 7$ by Schoen and Yau \cite{schoen.yau:79} or when $(M,g)$ is spin by Witten \cite{witten:81}. 
More recently Lohkamp has announced a complete proof 
in \cite{lohkamp:06}. 
Note that the conjecture has been proved by Schoen and Yau \cite{schoen.yau:88} under the assumption that the manifold is conformally flat leading to the complete solution of the Yamabe problem.  \\

\noindent  Now, let $M$ be a closed manifold. We say that PMT (for Positive Mass Theorem) is true on $M$ if for every point $p \in M$ and for every metric $g$ on $M$ which is flat around $p$ and for which $L_g$ is a positive operator we have $m_{\frac{n-2}{4(n-1)} \scal_g} \geq 0$. 
Using that the negativity of 
$m_{\frac{n-2}{4(n-1)}\scal_g}$ is preserved by 
surgery we obtain the second main result of this 
paper which is the following:

\begin{theorem} 
\label{main2}
 Assume that PMT is true on a closed simply connected non-spin manifold of dimension $n\geq 5$, then PMT is true on all closed manifolds of dimension $n$.  
\end{theorem}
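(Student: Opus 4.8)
The plan is to use surgery theory to connect an arbitrary closed manifold $M^n$ to the given simply connected non-spin manifold $N^n$ on which PMT is assumed to hold, and then propagate the validity of PMT backwards along the surgeries using the (contrapositive of the) fact that negativity of $m_{\frac{n-2}{4(n-1)}\scal_g}$ is preserved by surgery. Concretely, I would argue by contradiction: suppose PMT fails on some closed manifold $M^n$, so there is a metric $g$ on $M$, flat near a point $p$, with $L_g>0$ and $m_{\frac{n-2}{4(n-1)}\scal_g}<0$. I want to transport this counterexample onto $N$, which would contradict the hypothesis. The key input is that a surgery of dimension $0\leq k\leq n-3$ (i.e.\ along an embedded $S^k$ with trivial normal bundle) which turns $M$ into $M'$ also allows one to produce, from a metric on $M$ with negative Yamabe mass, a metric on $M'$ with negative Yamabe mass; this is the ``preservation of negativity under surgery'' statement referenced in the excerpt.

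The main structural step is the bordism-theoretic reduction. Since $N$ is simply connected and non-spin, its (oriented) bordism class lies in $\Omega^{SO}_n$, and the relevant statement from surgery/cobordism theory (in the spirit of Gromov--Lawson, Schoen--Yau, and the systematic treatment in Ammann--Dahl--Humbert) is that \emph{any} closed $n$-manifold $M$ ($n\geq 5$) can be connected to $N$ by a sequence of surgeries of dimensions in the admissible range $\{0,1,\dots,n-3\}$ — possibly after first passing to $M\#(\text{something})$ or using that $N$ non-spin forces the relevant bordism groups to behave well. More precisely: the simply connected surgery exact sequence / Kreck--Stolz type arguments show that in dimension $n\geq 5$, if one is allowed surgeries up to codimension $3$, then from any $M$ one can reach a fixed representative in each bordism class, and the class of $N$ (being non-spin and simply connected) is ``large enough'' — via connected sums with $\mathbb{CP}^2$-like blocks and handle cancellation — that every $M^n$ surgers to $N^n$ within that class, or at least to some $M'$ from which the counterexample transports to $N$. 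I would cite \cite{ammann.dahl.humbert:13} for the surgery formalism here and spell out the admissible-dimension bookkeeping (surgeries of dimension $\leq n-3$ are exactly what the mass-surgery theorem tolerates, and these generate all of simply connected bordism together with $0$- and $1$-surgeries that fix the bordism class and kill $\pi_1$).

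Carrying out the contradiction: start with the counterexample $(M,g)$. Using $0$-surgeries (connected sums) and $1$-surgeries I first reduce to a simply connected manifold $\widetilde M$ that is bordant to $N$ (both being non-spin — note connected-summing with $M$ does not create a spin obstruction once we also connect-sum with a non-spin piece, and the point is that $N$ is non-spin, which pins down the $w_2$ component), still carrying a metric with $L>0$ and negative Yamabe mass by the surgery-preservation theorem applied in the negativity direction. Then, since $\widetilde M$ and $N$ are simply connected, non-spin, of the same dimension $n\geq 5$ and in the same oriented bordism class, they are related by a sequence of surgeries of dimension between $2$ and $n-3$ (simply connected surgery below the middle dimension on the trace of a bordism, pushed through from both ends); applying the negativity-preservation theorem once more along this sequence yields a metric on $N$, flat near a point, with positive Yamabe operator and \emph{negative} mass $m_{\frac{n-2}{4(n-1)}\scal}<0$ — contradicting the assumption that PMT holds on $N$.

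The hard part will be the surgery/bordism bookkeeping: one must verify that \emph{every} closed $n$-manifold can be joined to the fixed non-spin simply connected $N$ using only surgeries in the admissible range $0\leq k\leq n-3$ (so that the mass-negativity-surgery theorem applies at each step), being careful about (i) making things simply connected via $0$- and $1$-surgeries without leaving the range, (ii) controlling the characteristic classes so that one lands in the non-spin bordism class of $N$ rather than a spin one (this is where the hypothesis that $N$ is non-spin is essential, and where non-simply-connected or spin targets would fail), and (iii) handling the low-dimensional exceptional cases — which is exactly why $n\geq 5$ is assumed, so that Whitney-type embeddings and the simply connected surgery machinery are available. Everything analytic (the behavior of the mass under a single surgery) is quoted from the earlier section; the content of this proof is purely the topological reduction, and I expect that to occupy most of the write-up, following closely the constructions of \cite{ammann.dahl.humbert:13}.
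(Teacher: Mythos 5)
Your mechanism is the right one --- push a hypothetical negative-mass metric through surgeries of dimension $\leq n-3$ using the preservation theorem, with the Gromov--Lawson/Smale cobordism argument supplying the surgeries --- but the topological reduction your contradiction rests on has a genuine gap. Every surgery's trace is a cobordism, so surgeries never change the oriented bordism class: the simply connected $\widetilde M$ you produce from $M$ by $0$- and $1$-surgeries is bordant to $M$, not to $N$, and an arbitrary closed $M$ is in general \emph{not} bordant to the fixed manifold $N$ (already $\Omega^{SO}_5\cong\mZ/2\mZ\neq 0$, and $\Omega^{SO}_8\cong\mZ^2$). Hence the assertion that ``every $M^n$ surgers to $N^n$'' is false, and your hedges (``after passing to $M\#(\textrm{something})$'', ``or at least to some $M'$'') are exactly the missing argument: if you modify $M$ you must check that the counterexample survives the modification, and if you instead aim at a modification of $N$ you must first prove that the modified manifold still satisfies PMT, which is not free --- it is half of the actual proof. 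You also never address non-orientable $M$, for which the oriented-cobordism lemma does not even apply.

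The paper resolves both issues by symmetrizing instead of aiming at $N$ itself. First it shows $N\#(-N)$ satisfies PMT: $N\#N\#(-N)$ is oriented cobordant to $N$, so by Lemma \ref{lemma_orient_cobord} it is connected to $N$ by surgeries of dimension $\leq n-3$; reading these backwards, $N\#N\#(-N)$ is obtained from $N$ by surgeries of dimension $\ell\in\{2,\dots,n-1\}$, which preserve PMT (Lemma \ref{pmt_a}, via Theorem \ref{theorem_preservation_mass} applied to the undoing surgery), and the connected-sum Lemma \ref{pmt_conn_sum} then strips off one $N$ summand. Next, for an orientable $M$ violating PMT, $M\#(-M)$ still violates PMT, is null-cobordant, hence cobordant to the simply connected non-spin manifold $N\#(-N)$, and Lemma \ref{lemma_orient_cobord} together with Theorem \ref{theorem_preservation_mass} transports the negative mass there, a contradiction; the non-orientable case is handled through the oriented double cover, where the Green function lifts to a sum of two Green functions and the mass at the base point is then strictly positive. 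Your sketch could in principle be repaired along your own lines by connect-summing $M$ with a positive-Yamabe representative of the class $[N]-[M]$ (such a representative exists, e.g.\ simply connected non-spin plus Gromov--Lawson), but this needs exactly the auxiliary facts your write-up neither states nor proves (failure of PMT passes to connected sums; existence of psc representatives in every class), and it still leaves the non-orientable case untreated.
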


\noindent Note that using for instance Proposition 4.1 in \cite{schoen:89} or Section 5 in \cite{lohkamp:99} 
one can conclude from the assumption of this theorem 
that every asymptotically flat Riemannian manifold 
of dimension $n$ with non-negative and integrable 
scalar curvature has non-negative ADM mass. \\


\noindent This theorem should help a lot to prove the positive mass conjecture. Indeed, it reduces the problem to finding a non-spin simply connected manifold $M$ on which PMT is true. For instance, $\mC P^{2m}$ or $\mC P^{2m}\times S^k$ with $k\geq 2$  could be a good candidate to provide such an example by using its particular structure. We did not succeed until now but let us explain how some structures could help a lot to prove that PMT is true on a manifold. First, it is not difficult to find a simply connected manifold for which PMT is true: it suffices to choose a manifold which is spin (the sphere for instance). But we can also easily construct  a non-spin manifold for which PMT is true (unfortunately, it is not simply connected): 

\begin{proposition}
 Let $n \geq 5$, $n\equiv 1 \bmod 4$. Then, the projective space $\mR P^n$ satisfies PMT. 
\end{proposition}

\noindent The proof of this proposition is really simple and is given is Section \ref{section_pmt_non_spin}. \\

\noindent The paper is organized as follows: 
\begin{itemize}
 \item In Section \ref{prelimin}, we give some general preliminaries which will be used in the whole text;
\item In Section \ref{variational}, we give the statement of Theorem \ref{maintheorem} whose goal is to establish the variational characterization of the mass;
\item Sections \ref{section_proof_main_thm} and \ref{proof'} are devoted to the proof of Theorem \ref{maintheorem};
\item In Section \ref{applications}, we give several applications of Theorem \ref{maintheorem};
\item In Section \ref{section_surgery_preservation}, we establish a surgery formula for the mass which will be the main ingredient in the proof of Theorem \ref{main2}; 
\item In Section \ref{section_pmt_non_spin}, we show how the results of Section \ref{section_surgery_preservation} can be applied to prove Theorem \ref{main2}.
\end{itemize}

\noindent {\bf Acknowledgements: }
The authors would like to thank Bernd Ammann and 
Mattias Dahl 
for many enlightening discussions on the subject. 
A. Hermann is supported by the DFG research grant 
HE 6908/1-1. 
E. Humbert is partially supported by ANR-10-BLAN 0105 and by ANR-12-BS01-012-01.

\section{Preliminaries} 
\label{prelimin}

In these sections, we introduce all the objects and  the notation which will be needed in the paper and we give some additional information on the context of the problem. 

\subsection{Notation} 

All manifolds are assumed to be connected and 
without boundary unless otherwise stated. 
We denote by $\xi^n$ the Euclidean metric on $\mR^n$ 
and by $\sigma^n$ the standard metric of constant 
sectional curvature $1$ on $S^n$. 
For any Riemannian manifold $(M,g)$ and for $p\in M$ 
and $r>0$ we denote by $B(p,r)$ or by 
$B^g(p,r)$ the open ball of radius $r$ centered 
at $p$. 
For a subset $N$ of $M$ we denote by 
$\vol(N)$ or $\vol^g(N)$ 
the volume of $N$ with respect to $g$ 
and by $\dist_g(x,N)$ the distance of $x$ to $N$. 
The scalar curvature of any Riemannian metric $g$ 
will be denoted by $\scal_g$. 
We will use the abbreviation
\begin{displaymath}
\int_{M\setminus\{p\}}
:=\lim_{\varepsilon\to0}
\int_{M\setminus B(p,\varepsilon)}.
\end{displaymath}
For any Riemannian manifold $(M,g)$ and for any 
$q\in[1,\infty]$ 
we denote by $L^q(M)$ the space of all measurable 
functions on $M$ with finite $L^q$-norm. 
The Sobolev space $H^{1,2}(M)$ is the space of all 
functions in $L^2(M)$ whose distributional 
derivative exists and is in $L^2(M)$.

\subsection{A cut-off formula}
We state a formula which is used several times 
in the article (see also Appendix A.3 in 
\cite{ammann.dahl.humbert:13}). 
Let $u$ and $\chi$ be smooth functions on a 
Riemannian manifold $(M,g)$ and assume that 
$\chi$ has compact support. 
Then we have 
\begin{align}
\nonumber 
\int_{M} |\diff(\chi u)|^2\,dv^g &=
\int_{M} |u \diff\chi+\chi \diff u|^2\,dv^g\\
\nonumber 
&=\int_{M}
(u^2|\diff\chi|^2
+g( \chi^2 \diff u,\diff u)
+g(2u\chi \diff\chi,\diff u))\,dv^g\\
\nonumber 
&=\int_{M}
(u^2|\diff\chi|^2
+g(\chi^2 \diff u, \diff u)
+g(u\diff(\chi^2),\diff u))\,dv^g\\
\nonumber 
&=\int_{M}
(u^2|\diff\chi|^2
+g(\diff(\chi^2u),\diff u))\,dv^g\\
\label{cut_off_formula}
&=\int_{M}
(u^2|\diff\chi|^2+\chi^2 u\Delta_g u)\,dv^g.
\end{align}

\subsection{Properties of the Green function} 

Let $(M,g)$ be a closed Riemannian manifold of 
dimension $n\geq 3$. 
Let $f\in C^{\infty}(M)$ and assume that the 
operator $P_f:=\Delta_g+f$ acting on 
$C^{\infty}(M)$ has only positive eigenvalues. 
Fix $p\in M$. 
A function 
$G_f\in L^1(M) \cap C^{\infty}( M\setminus\{p\})$ 
is called a Green function for $P_f$ at $p$ 
if for all $u\in C^{\infty}(M)$ we have 
\begin{displaymath}
\int_{M\setminus\{p\}} G_f P_f u\, dv^g = u(p). 
\end{displaymath}
In our article we use the following properties of 
the Green function which are well known. 

\begin{proposition}
\label{green_function}
Assume that $P_f$ is a positive operator. 
Then the following holds. 
\begin{enumerate}[1.]
\item At every point $p\in M$ there exists a unique 
Green function $G_f$ for $P_f$. 
Moreover $G_f$ is strictly positive on 
$M\setminus\{p\}$.
\item Let $p\in M$ and assume that 
there exists an open neighborhood $U$ of $p$ such 
that $g$ is flat on $U$ and $f\equiv 0$ on $U$. 
Then the function $G_f$ has the following 
expansion as $x\to p$
\begin{equation}
\label{expansion} 
G_f(x)=\frac{1}{(n-2)\omega_{n-1}r^{n-2}}
+m_f+o(1),
\end{equation}
where $r:=\dist_g(p,\cdot)$ is the distance function 
to $p$, $\omega_{n-1}$ is the volume of the standard 
$(n-1)$-sphere and $m_f$ is a real number 
called the mass of $P_f$ at $p$. 
\end{enumerate} 
\end{proposition}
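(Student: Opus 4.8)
\noindent\emph{Proof strategy.} The plan is to establish the three assertions of Proposition~\ref{green_function} — existence and uniqueness of $G_f$, its strict positivity, and the expansion \eqref{expansion} — in that order, the expansion being the substantive point.

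For \emph{existence} I would use a parametrix-plus-correction argument, and I describe the construction when $g$ is flat near $p$ (the general case of part~1 is analogous, using geodesic normal coordinates and a finite-order local parametrix). Choose a cut-off $\eta\colon M\to[0,1]$ supported in a small flat ball around $p$ and equal to $1$ near $p$, and in the Euclidean coordinates on that ball set $\Gamma:=\frac{1}{(n-2)\omega_{n-1}r^{n-2}}$, so that $\Delta_g\Gamma=\delta_p$ there in the distributional sense. Then $u_0:=\eta\Gamma\in L^1(M)\cap C^\infty(M\setminus\{p\})$ and $P_fu_0=\delta_p+w$ with $w\in C^\infty(M)$, since the error terms involve only derivatives of $\eta$ (supported where $\Gamma$ is smooth) and $f\eta\Gamma$ (which vanishes near $p$). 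As $P_f$ has trivial kernel it is an isomorphism $H^{2,2}(M)\to L^2(M)$, so there is $v\in H^{2,2}(M)$, smooth by elliptic regularity, with $P_fv=-w$; then $G_f:=u_0+v$ is a Green function, and $G_f\to+\infty$ as $x\to p$ since $u_0$ does and $v$ is bounded. \emph{Uniqueness} follows at once: the difference of two Green functions lies in $L^1(M)$ and is annihilated by $P_f$, hence is smooth by hypoellipticity and therefore zero, as $\ker P_f=0$.

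For \emph{strict positivity} I would combine the variational positivity of $P_f$ with the maximum principle. Since $G_f>0$ near $p$, the function $u:=\max(-G_f,0)$ lies in $H^{1,2}(M)$ and is supported away from $p$. Testing the equation $\Delta_gG_f+fG_f=0$ (valid on $M\setminus\{p\}$) against $u$, and using that $\diff u=-\diff G_f$ where $G_f<0$ and $\diff u=0$ elsewhere, yields $\int_M(|\diff u|^2+fu^2)\,dv^g=0$. If $u\not\equiv0$ this contradicts $\int_M(|\diff u|^2+fu^2)\,dv^g\ge\lambda_1\int_Mu^2\,dv^g>0$, where $\lambda_1>0$ is the lowest eigenvalue of $P_f$; hence $G_f\ge0$ on $M\setminus\{p\}$. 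As $(\Delta_g+f+c)G_f=cG_f\ge0$ for $c>\sup_M|f|$, and $G_f\not\equiv0$, the strong maximum principle (whose zeroth-order coefficient $f+c$ now has the right sign) forces $G_f>0$ on $M\setminus\{p\}$.

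For the \emph{expansion}, assume $g$ is flat on $U\ni p$ and $f\equiv0$ on $U$, and identify $U$ with a Euclidean ball $B(0,\rho)$. On $U\setminus\{p\}$ one has $P_fG_f=\Delta_{\xi^n}G_f=0$, so $G_f$ is a positive harmonic function on the punctured ball. By B\^ocher's theorem on harmonic functions with an isolated singularity, $G_f=a\,r^{2-n}+h$ on $B(0,\rho)$ with $a\ge0$ and $h$ harmonic, hence smooth, on the full ball. To pin down $a$ I use the distributional identity $\Delta_{\xi^n}G_f=\delta_p$ near $p$ together with $\Delta_{\xi^n}(r^{2-n})=(n-2)\omega_{n-1}\delta_0$ and $\Delta_{\xi^n}h=0$, forcing $a=\frac{1}{(n-2)\omega_{n-1}}$. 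Since $h(x)=h(0)+O(r)$ as $x\to p$, setting $m_f:=h(0)$ gives \eqref{expansion}. I expect this last step to be the main obstacle: quoting B\^ocher's theorem and fixing the normalization by the distributional equation is the clean route, but an elementary self-contained argument instead expands $G_f-\frac{1}{(n-2)\omega_{n-1}r^{n-2}}$ in spherical harmonics on the punctured ball and discards the surviving singular modes — integrability in $L^1$ kills the modes of degree $\ge2$, and positivity of $G_f$ kills the dipole mode — which is elementary but more laborious.
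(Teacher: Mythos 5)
Your proof is correct, and its core coincides with the paper's: the paper also builds $G_f$ by a parametrix-plus-correction, taking $\eta$ equal to the constant $\frac{1}{(n-2)\omega_{n-1}}$ near $p$, setting $F_{\eta}=\Delta_g(\eta r^{2-n})$ (smooth, since $\eta$ is constant near $p$ and $g$ is flat there) and defining $G_f=\eta r^{2-n}-P_f^{-1}F_{\eta}$; your $u_0+v$ is the same construction, and your uniqueness argument (hypoellipticity of $P_f$ plus invertibility) is literally the paper's. The differences lie at the two ends. For part 1 the paper omits existence, uniqueness and positivity as classical, whereas you supply a positivity proof — testing the equation on $M\setminus\{p\}$ against $\max(-G_f,0)$, using the variational bound $\int_M(|\diff u|^2+fu^2)\,dv^g\geq\lambda_1\int_M u^2\,dv^g$, then the strong maximum principle after adding $c>\sup_M|f|$ to make the zero-order coefficient nonnegative; this is correct (note $M\setminus\{p\}$ is connected, and $G_f>0$ near $p$ by the construction, so the test function is admissible). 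For part 2 the paper simply reads the expansion off the construction: the correction term is smooth, so $m_f$ is its value at $p$. You instead re-derive the expansion via B\^ocher's theorem together with $\Delta_{\xi^n}(r^{2-n})=(n-2)\omega_{n-1}\delta_0$; this is correct and has the small merit of applying to an arbitrary Green function without invoking uniqueness, but it is redundant given your own construction, since $G_f=\Gamma+v$ with $v$ smooth near $p$ already yields \eqref{expansion} with $m_f=v(p)$ — which is exactly the paper's one-line conclusion.
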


\begin{proof}
1.: The proof is classical and we omit it here.\\
2.: Let $\eta$ and $F_{\eta}$ be as in Section 
\ref{variational}. 
Since $P_f$ has only positive eigenvalues, 
$P_f$ is 
invertible on $C^{\infty}(M)$. 
Let $v:= P_f^{-1}(F_{\eta})$. 
The function $G_f := \eta r^{2-n} - v$ is smooth 
on $M \setminus \{ p \}$, is in $L^1(M)$ and 
satisfies $P_f G_f= 0$ on $M \setminus \{ p\}$. 
Moreover, near $p$, 
\begin{displaymath}
G_f(x) = \frac{1}{(n-2) \omega_{n-1} r^{n-2}}
+v(x)
\end{displaymath}
where $P_f v =  \Delta_g v= 0$. 
Since the manifold is flat around $p$ and thus 
locally isometric to a neighborhood of $0$ 
in $\mR^n$ and since the Green function for the 
Laplacian on $\mR^n$ at $0$ is 
$\frac{1}{(n-2) \omega_{n-1} r^{n-2}}$, we get 
that $P_f v = \delta_p$ and thus $G_f$ is a 
Green function for $P_f$. 
This proves the existence.

If now $G$ and $G'$ are Green functions 
for $P_f$ then $P_f(G-G') = 0$ in the sense of 
distributions. 
By standard regularity theorems, $G-G'$ is smooth and 
hence, by invertibility of $P_f$ we obtain $G=G'$. 
\end{proof}

\subsection{The Yamabe operator} 
\label{yamabeop}
Let $(M,g)$ be a closed Riemannian manifold of 
dimension $n \geq 3$. 
We define $f:=\frac{n-2}{4(n-1)}\scal_g$ and 
denote the operator $P_f$ by 
\begin{displaymath}
L_g:= \Delta_g + \frac{n-2}{4(n-1)} \scal_g.
\end{displaymath}
This operator is called the 
{\em conformal Laplacian} or {\em  Yamabe operator}. 
If the metric $g$ is flat on an open neighborhood 
of a point $p\in M$, we will denote the mass 
of $L_g$ at $p$ by $m(M,g)$. 
There are several reasons why this operator is very 
important. 
First it played a crucial role in the solution of 
the Yamabe problem, which is a famous problem 
in conformal geometry. 
For more information on the subject, the reader may 
refer to \cite{aubin:98, hebey:97, lee.parker:87}. 
Furthermore the mass of the operator $L_g$ can be 
interpreted as the ADM mass of an asymptotically 
flat Riemannian manifold, which is an important 
quantity measuring the total energy of an isolated 
gravitational system in general relativity 
(see \cite{schoen:84}). 

In this article we will use several properties of 
the operator $L_g$. 
First it transforms nicely under 
conformal changes of the metric. 
Namely, if $g'= u^{\frac{4}{n-2}} g$ are two 
conformally related metrics, where $u$ is 
a smooth positive function on $M$, then 
for all $\varphi\in C^{\infty}(M)$ we have 
\begin{equation}
\label{L_g_conform}
L_{g'}(u^{-1}\varphi)=u^{-\frac{n+2}{n-2}}
L_g(\varphi)
\end{equation}
(see e.\,g.\,\cite{lee.parker:87}, p.\,43). 
Using this formula with $\varphi=u$ we obtain 
the equation 
\begin{equation} 
\label{scalgg'}
L_{g}(u)
= \frac{n-2}{4(n-1)}\scal_{g'} u^{\frac{n+2}{n-2}},
\end{equation}
which gives a relation between the scalar curvatures 
of $g$ and $g'$. 
Next we define 
\begin{displaymath}
Y(M,g) := 
\inf\Big\{
\frac{ \int_M u L_g u\, dv^{g}}
{(\int_M |u|^p\, dv^g)^{2/p}}
\Big|\,
u\in C^{\infty}(M),\,u\not\equiv 0
\Big\},
\end{displaymath}
where $p:=\frac{2n}{n-2}$. 
This number is a conformal invariant called the 
{\em Yamabe constant} of $(M,g)$. 
The operator $L_g$ is positive (i.e. has only 
positive eigenvalues) if and only if $Y(M,g)$ is 
positive. 

If $g'=u^{\frac{4}{n-2}}g$ and 
if $g$ and $g'$ are both flat in an 
open neighborhood of a point $p\in M$ and if 
$G$ and $G'$ denote the Green functions of 
$L_g$ and $L_{g'}$ respectively, we have for 
all $x\in M\setminus\{p\}$
\begin{displaymath}
G'(x)=u(p)^{-1}u(x)^{-1}G(x)
\end{displaymath}
(see e.g.\cite{lee.parker:87}, p.\,63). 
If we write down the expansions of $G$ and $G'$ 
given by Proposition \ref{green_function} and use 
that $u$ is constant on an open neighborhood of $p$, 
it follows that $m(M,g)$ and $m(M,g')$ have the 
same sign 
(see also \cite{schoen.yau:88} or \cite{hebey:97}, p.\,277).
\section{A variational characterization of the mass}  
\label{variational} 

We keep the same notation as above and fix a function 
$f$ such that the operator $P_f$ is positive. 
Then the Green function $G_f$ of $P_f$ at $p$ 
and the associated mass $m_f$ are well defined.  
Let $\delta>0$ such that the ball $B(p,\delta)$ 
around $p$ of radius $\delta$ is contained in $U$ 
and let $\eta$ be a smooth function on $M$ 
such that $\eta\equiv\frac{1}{(n-2)\omega_{n-1}}$ 
on $B(p,\delta)$ and $\supp(\eta)\subset U$, 
where $\omega_{n-1}$ denotes the volume of $S^{n-1}$ 
with the standard metric. The function $F_{\eta}$: 
$M\to\mR$ defined by 
\begin{displaymath}
F_{\eta}(x)=
\left\{\begin{array}{ll}
\Delta_g(\eta r^{2-n})(x),&x\neq p\\
0, & x=p
\end{array}\right.
\end{displaymath}
is smooth on $M$. 
For every $u\in C^{\infty}(M)$ we define 
\begin{displaymath}
I_f(u):=\int_{M\setminus\{p\}}
(\eta r^{2-n}+u)P_f(\eta r^{2-n}+u)\,dv^g,
\end{displaymath}
and 
\begin{displaymath}
J_f(u):=\int_{M\setminus\{p\}}
\eta r^{2-n}F_{\eta}\,dv^g
+2\int_{M}
uF_{\eta}\,dv^g
+\int_{M}uP_f u\,dv^g.
\end{displaymath}
We also define 
\begin{align*}
\nu&:=\inf\{I_f(u)|\,
u \in C^{\infty}(M),\, u(p)= 0\},\\
\mu&:=\inf\{J_f(u)|\,u\in C^{\infty}(M)\}.
\end{align*}
Let us remark the following fact: if $\eta'$ is 
another smooth function with the same properties 
as $\eta$, one can construct in a similar way:
\begin{displaymath}
I'_f (u) :=\int_{M\setminus\{p\}}
(\eta' r^{2-n}+u)P_f(\eta' r^{2-n}+u)\,dv^g.
\end{displaymath} 
Note that, for all $u$,
\begin{displaymath}
I_f  ( u) = I'_f(u - \eta' r^{2-n} + \eta r^{2-n})
\end{displaymath}  
and that 
$u-\eta'r^{2-n}+\eta r^{2-n}$ has a smooth 
extension to all of $M$. 
As a consequence, the number $\nu$ does not depend 
on the choice of $\eta$.

The following theorem is the main result 
of this article. 

\begin{theorem} \label{maintheorem} 
We have $\nu= \mu= -m_f= J_f(G_f-\eta r^{2-n})$.  
\end{theorem}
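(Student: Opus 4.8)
The plan is to prove the chain of equalities $\nu = \mu = -m_f = J_f(G_f - \eta r^{2-n})$ by first relating $I_f$ and $J_f$ via an algebraic identity, then minimizing explicitly, and finally identifying the minimizer with the Green function.

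\textbf{Step 1: Relating $I_f$ and $J_f$.} For $u \in C^\infty(M)$ I would expand
\begin{displaymath}
I_f(u) = \int_{M\setminus\{p\}}(\eta r^{2-n} + u)P_f(\eta r^{2-n}+u)\,dv^g
\end{displaymath}
using bilinearity. Since $P_f(\eta r^{2-n}) = \Delta_g(\eta r^{2-n}) + f\eta r^{2-n}$ and $f \equiv 0$ near $p$ (so $f \eta r^{2-n} = 0$), we have $P_f(\eta r^{2-n}) = F_\eta$ as a smooth function on all of $M$. The cross terms give $\int u F_\eta\,dv^g$; here one must be slightly careful near $p$, but since $u$ is bounded and $F_\eta$ is smooth (in particular integrable, indeed in $L^2$), and $\eta r^{2-n} \in L^1$, all integrals over $M\setminus\{p\}$ converge, and by self-adjointness of $P_f$ (valid because $\eta r^{2-n} + u$ is only singular like $r^{2-n}$, which is integrable, and integration by parts over $M\setminus B(p,\varepsilon)$ produces a boundary term that vanishes as $\varepsilon \to 0$) the two cross terms $\int (\eta r^{2-n})P_f u$ and $\int u P_f(\eta r^{2-n})$ are equal, each to $\int_M u F_\eta\,dv^g$. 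This yields exactly
\begin{displaymath}
I_f(u) = J_f(u).
\end{displaymath}
In particular the functionals agree, but $J_f$ is manifestly defined for \emph{all} $u \in C^\infty(M)$ without the constraint $u(p) = 0$. Comparing the infima: $\nu$ is the infimum of $I_f = J_f$ over $\{u(p) = 0\}$, while $\mu$ is the infimum over all of $C^\infty(M)$; a priori $\mu \le \nu$.

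\textbf{Step 2: Computing $\mu$ and its minimizer.} The functional $J_f$ is a quadratic functional in $u$: a constant term $c_0 := \int_{M\setminus\{p\}}\eta r^{2-n}F_\eta\,dv^g$, a linear term $2\int_M u F_\eta\,dv^g$, and a positive-definite quadratic term $\int_M u P_f u\,dv^g$ (positive-definite because $P_f$ has only positive eigenvalues). By standard Hilbert-space arguments (or completing the square), $J_f$ attains its minimum at the unique critical point $u_0$ solving $P_f u_0 = -F_\eta$, i.e. $u_0 = -P_f^{-1}(F_\eta) = -v$ in the notation of the proof of Proposition \ref{green_function}. At this minimizer, the standard formula for the minimum of a quadratic form gives
\begin{displaymath}
\mu = J_f(u_0) = c_0 + \int_M u_0 F_\eta\,dv^g = c_0 - \int_M v F_\eta\,dv^g.
\end{displaymath}
Now recall from Proposition \ref{green_function} that $G_f = \eta r^{2-n} - v$, so $u_0 = -v = G_f - \eta r^{2-n}$, which is the claimed minimizer; thus $\mu = J_f(G_f - \eta r^{2-n})$.

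\textbf{Step 3: Identifying $\mu = -m_f$.} This is the step I expect to be the main obstacle, since it is where the actual analytic content lies. I would evaluate $J_f(u_0)$ directly. Using $I_f = J_f$ and $u_0 = G_f - \eta r^{2-n}$,
\begin{displaymath}
\mu = I_f(G_f - \eta r^{2-n}) = \int_{M\setminus\{p\}} G_f\, P_f G_f\,dv^g.
\end{displaymath}
Since $P_f G_f = \delta_p$ in the sense of distributions and $P_f G_f = 0$ on $M\setminus\{p\}$, I would compute $\int_{M\setminus B(p,\varepsilon)} G_f P_f G_f\,dv^g$ by Green's formula, which reduces to a boundary integral over $\partial B(p,\varepsilon)$ involving $G_f$ and $\partial_r G_f$. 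Plugging in the expansion $G_f = \frac{1}{(n-2)\omega_{n-1}r^{n-2}} + m_f + o(1)$ and the corresponding expansion of $\partial_r G_f = -\frac{1}{\omega_{n-1}r^{n-1}} + o(r^{1-n})$ (which follows by differentiating; one can justify this since $v$ is smooth near $p$), and using $\vol(\partial B(p,\varepsilon)) = \omega_{n-1}\varepsilon^{n-1}$, the leading $\varepsilon^{2-2n}$ terms cancel by the structure of the singularity and the surviving finite limit as $\varepsilon \to 0$ is $-m_f$. (The sign: the unit outward normal of $M\setminus B(p,\varepsilon)$ points toward $p$, i.e. $-\partial_r$, which is the source of the minus sign matching $-m_f$.) This gives $\mu = -m_f$.

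\textbf{Step 4: Closing the loop with $\nu$.} It remains to show $\nu \le \mu$ (we already have $\mu \le \nu$). For this I would note that adding a constant $a$ to $u$ changes $J_f(u)$, but more usefully: given the unconstrained minimizer $u_0$, consider $\tilde u := u_0 - u_0(p)$, which satisfies $\tilde u(p) = 0$, so $\nu \le I_f(\tilde u) = J_f(\tilde u)$. Then I would show $J_f(\tilde u) = J_f(u_0)$, i.e. that $J_f$ is invariant under adding constants. This invariance should follow because $\int_M F_\eta\,dv^g = \int_M P_f(\eta r^{2-n})\,dv^g$ — wait, this need not vanish. Instead, the cleaner route is: $\nu$ does not depend on the choice of $\eta$ (as the paper already observes), and the constraint $u(p)=0$ in the definition of $\nu$ can be absorbed. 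Concretely, for any $u$ with $u(p)=0$, $\eta r^{2-n} + u$ ranges over all functions with the prescribed singularity and value-pattern; the key point is that the admissible competitors for $\nu$, namely $w = \eta r^{2-n} + u$ with $u(p) = 0$, include $G_f$ itself (since $G_f - \eta r^{2-n} = -v$ need not vanish at $p$...). So in fact the right argument is: $I_f(u) = \int G_f P_f G_f + \int (u+v) P_f(u+v) = -m_f + \int_M (u+v)P_f(u+v)\,dv^g$ after completing the square (using $P_f G_f \perp$ smooth functions appropriately), and the quadratic term $\int_M (u+v)P_f(u+v)\,dv^g \ge 0$ vanishes iff $u = -v = u_0$; thus $\nu = \inf\{I_f(u) : u(p)=0\} \ge -m_f$, with equality approached (or attained) — and since $u_0(p)$ may be nonzero, one approaches the infimum by $u_0 - u_0(p) + $ suitable corrections, or simply observes the infimum over the constrained set still equals $-m_f$ because the quadratic term can be made arbitrarily small while respecting $u(p)=0$ (take $u$ close to $u_0$ in $H^{1,2}$ but with $u(p) = 0$, using that point evaluation can be perturbed freely). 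This last density point is the second place requiring care. Combining all steps yields $\nu = \mu = -m_f = J_f(G_f - \eta r^{2-n})$.
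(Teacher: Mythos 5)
Your overall architecture (relate $I_f$ to $J_f$, minimize the quadratic functional $J_f$, identify the minimizer with $G_f-\eta r^{2-n}$, then handle the constraint $u(p)=0$ by approximation) matches the paper's, but there is a genuine error at the crucial step. The identity $I_f(u)=J_f(u)$ claimed in your Step 1 is false: when you integrate by parts over $M\setminus B(p,\varepsilon)$ to compare $\int \eta r^{2-n}\Delta_g u$ with $\int u\,\Delta_g(\eta r^{2-n})$, the boundary term $\int_{\partial B(p,\varepsilon)} u\,\partial_{\nu}(\eta r^{2-n})\,ds^g$ does \emph{not} vanish; since $\partial_r(\eta r^{2-n})\sim -\tfrac{1}{\omega_{n-1}}r^{1-n}$ and $\vol(\partial B(p,\varepsilon))=\omega_{n-1}\varepsilon^{n-1}$, it converges to $-u(p)$. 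The correct relation is $I_f(u)=J_f(u)+u(p)$ (the paper's Lemma \ref{lemma_Ig2}), and this extra $u(p)$ is precisely the mechanism by which the mass enters: the minimizer is $u_0=G_f-\eta r^{2-n}$ with $u_0(p)=m_f$, and then $\mu=J_f(u_0)=I_f(u_0)-m_f=-m_f$ because $\int_{M\setminus\{p\}}G_fP_fG_f\,dv^g=0$ (the integrand vanishes pointwise, as $G_f$ solves the homogeneous equation away from $p$).

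Your Step 3 compounds the error: you assert that $\int_{M\setminus\{p\}}G_fP_fG_f\,dv^g$ equals $-m_f$ after a boundary computation, but no Green's formula is needed for this integral — it is identically $0$ for every $\varepsilon$, so no finite part $-m_f$ can survive. Likewise, the decomposition in Step 4, $I_f(u)=-m_f+\int_M(u+v)P_f(u+v)\,dv^g$ for $u(p)=0$, happens to be correct, but not for the reason you give: the $-m_f$ comes from the asymmetric cross term $\int_{M\setminus\{p\}}G_f\,P_f(u+v)\,dv^g=(u+v)(p)=v(p)=-m_f$, i.e.\ again from the distributional identity $P_fG_f=\delta_p$, not from $\int G_fP_fG_f$ (the cross terms are not equal, for the same boundary-term reason as in Step 1). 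Your Step 2 (completing the square, $P_fu_0=-F_\eta$, $u_0=G_f-\eta r^{2-n}$) is fine and even a bit slicker than the paper's direct-method argument, and the density idea at the end of Step 4 (approximate $u_0$ by functions vanishing at $p$, using that a point has zero $H^{1,2}$-capacity when $n\geq3$) is the right one — the paper implements it with explicit cutoffs $\chi_s$ satisfying $|\diff\chi_s|\leq 2/s$ and the cut-off formula — but as written the proof does not establish $\mu=-m_f$, so the chain of equalities is not proved.
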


\noindent The proof is obtained in several steps and is done 
in Section \ref{section_proof_main_thm}.

\section{Proof of Theorem \ref{maintheorem}} 
\label{section_proof_main_thm}

The proof of Theorem \ref{maintheorem} proceeds in several steps. The general idea is to show that $\nu$ and $\mu$ are equal and that $\mu$  is attained by exactly one smooth function $u$ which is such that 
\begin{displaymath}
G_f = \eta r^{2-n} + u.
\end{displaymath} 
These facts will be established in the following 
lemmas. 
First we relate the functionals $I_f$ and $J_f$. 

\begin{lemma}
\label{lemma_Ig2}
For all $u\in C^{\infty}(M)$ we have 
\begin{displaymath}
I_f (u)=  J_f(u) + u(p).
\end{displaymath}
\end{lemma}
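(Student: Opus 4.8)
The plan is to compute $I_f(u)$ directly by expanding the integrand $(\eta r^{2-n}+u)P_f(\eta r^{2-n}+u)$ into four pieces and comparing term by term with the three pieces of $J_f(u)$, the discrepancy being exactly $u(p)$. First I would write
\begin{displaymath}
I_f(u)=\int_{M\setminus\{p\}}\eta r^{2-n}P_f(\eta r^{2-n})\,dv^g
+\int_{M\setminus\{p\}}\eta r^{2-n}P_f u\,dv^g
+\int_{M\setminus\{p\}}u\,P_f(\eta r^{2-n})\,dv^g
+\int_{M}u\,P_f u\,dv^g,
\end{displaymath}
using that $u$ is smooth on all of $M$ so that the last integral has no issue at $p$. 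On $U$ we have $f\equiv 0$, and $\eta r^{2-n}$ is smooth away from $p$, so $P_f(\eta r^{2-n})=\Delta_g(\eta r^{2-n})=F_\eta$ on $M\setminus\{p\}$ (here one uses that $\eta r^{2-n}$ is supported in $U$, where $f$ vanishes, so the zeroth order term contributes nothing). Hence the first term is $\int_{M\setminus\{p\}}\eta r^{2-n}F_\eta\,dv^g$, which matches the first term of $J_f$.

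The remaining work is to identify the two cross terms with $2\int_M uF_\eta\,dv^g + (\text{correction})$. The term $\int_{M\setminus\{p\}}u\,P_f(\eta r^{2-n})\,dv^g=\int_{M\setminus\{p\}}u F_\eta\,dv^g=\int_M uF_\eta\,dv^g$ since $F_\eta$ is smooth on $M$ (in particular integrable, so the deleted point is irrelevant). For the other cross term $\int_{M\setminus\{p\}}\eta r^{2-n}P_f u\,dv^g$, the idea is that $\eta r^{2-n}$ behaves like $\frac{1}{(n-2)\omega_{n-1}}$ times the Green function of the Laplacian near $p$, so integrating against $P_f u$ picks up a boundary contribution equal to $u(p)$. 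Concretely I would apply Green's formula on $M\setminus B(p,\varepsilon)$: since $\Delta_g(\eta r^{2-n})=F_\eta$ there and $P_f u=\Delta_g u + fu$,
\begin{displaymath}
\int_{M\setminus B(p,\varepsilon)}\eta r^{2-n}P_f u\,dv^g
=\int_{M\setminus B(p,\varepsilon)}\big(F_\eta u + f u\,\eta r^{2-n}\big)\,dv^g
+\int_{\partial B(p,\varepsilon)}\!\Big(\eta r^{2-n}\pa_\nu u - u\,\pa_\nu(\eta r^{2-n})\Big)\,ds^g,
\end{displaymath}
with $\nu$ the outward normal of $M\setminus B(p,\varepsilon)$. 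On $U$ the term $fu\,\eta r^{2-n}$ vanishes, and near $p$ one has $\eta r^{2-n}=\frac{r^{2-n}}{(n-2)\omega_{n-1}}$, so the boundary integral is the classical one computing the Dirac mass: as $\varepsilon\to0$ it converges to $u(p)$ (the $\eta r^{2-n}\pa_\nu u$ piece is $O(\varepsilon)$, and $-u\,\pa_\nu(\eta r^{2-n})$ over the sphere of radius $\varepsilon$ tends to $u(p)$). Letting $\varepsilon\to0$ gives $\int_{M\setminus\{p\}}\eta r^{2-n}P_f u\,dv^g=\int_M uF_\eta\,dv^g+u(p)$.

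Adding the four contributions: the first gives $\int_{M\setminus\{p\}}\eta r^{2-n}F_\eta\,dv^g$, the two cross terms together give $2\int_M uF_\eta\,dv^g+u(p)$, and the last gives $\int_M uP_fu\,dv^g$; the sum is exactly $J_f(u)+u(p)$, as claimed. The main obstacle is the careful treatment of the singular integral near $p$ — justifying that the boundary term produces precisely $u(p)$ and that all deleted-point limits exist — which is really just the standard Green's identity computation for the fundamental solution of $\Delta$ on $\mR^n$, transported via the local flatness around $p$; everything else is bookkeeping. An alternative, slightly slicker route is to note that $I_f(u)-J_f(u)$ is, by the displayed algebra, an expression depending on $u$ only through the pairing of $P_f u$ (or equivalently $\Delta_g u$ near $p$) against the fundamental solution, hence equals $u(p)$ by the defining property of the Green function of $\Delta_g$ at $p$ combined with the cut-off formula \eqref{cut_off_formula}; I would still present the direct Green's-formula computation since it is self-contained.
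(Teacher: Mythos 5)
Your proposal is correct and follows essentially the same route as the paper: expand $I_f(u)$ into four terms, use $f\equiv 0$ on $\supp(\eta)$ to get $P_f(\eta r^{2-n})=\Delta_g(\eta r^{2-n})=F_\eta$, and apply Green's second identity on $M\setminus B(p,\varepsilon)$ so that the boundary term produces exactly $u(p)$ as $\varepsilon\to 0$. One small caveat: with the paper's (positive) Laplacian, your displayed boundary term $\eta r^{2-n}\partial_\nu u-u\,\partial_\nu(\eta r^{2-n})$ and the limit claim $-u\,\partial_\nu(\eta r^{2-n})\to u(p)$ are both correct only if $\nu$ is the unit normal pointing away from $p$ (i.e.\ into $M\setminus B(p,\varepsilon)$, as in the paper), not the outward normal of the exterior domain as you named it; as written the two sign slips cancel, so the conclusion is unaffected.
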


\begin{proof}
Using that $f\equiv 0$ on $\supp(\eta)$ 
we calculate 
\begin{align*}
I_f(u)&=
\int_{M\setminus\{p\}}
\eta r^{2-n}\Delta_g(\eta r^{2-n})\,dv^g
+\int_{M\setminus\{p\}}
u\Delta_g(\eta r^{2-n})\,dv^g\\
&{}+\int_{M\setminus\{p\}}
\eta r^{2-n}\Delta_gu\,dv^g
+\int_{M}uP_fu\,dv^g.
\end{align*}
Let $\varepsilon>0$ and let $\nu$ be the 
unit normal vector field on 
$\partial B(p,\varepsilon)$ pointing into 
$M\setminus B(p,\varepsilon)$. 
Integrating by parts,  we have 
\begin{align*}
&\int_{M\setminus B(p,\varepsilon)}
\eta r^{2-n}\Delta_gu\,dv^g
-\int_{M\setminus B(p,\varepsilon)}
u\Delta_g(\eta r^{2-n})\,dv^g\\
&=\int_{\partial B(p,\varepsilon)}
\eta r^{2-n}\partial_{\nu}u\,ds^g
-\int_{\partial B(p,\varepsilon)}
u\partial_{\nu}(\eta r^{2-n})\,ds^g.
\end{align*}
As $\varepsilon\to 0$, the first term on the 
right hand side tends to $0$ and the second 
integral on the right hand side tends to $-u(p)$. 
The assertion follows. 
\end{proof}

\begin{lemma} \label{existu}
We have $\mu>-\infty$ and $\nu>-\infty$. 
Furthermore there exists a unique function 
$u\in C^{\infty}(M)$ such that $\mu=J_f(u)$. 
\end{lemma}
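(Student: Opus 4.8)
The functional $J_f$ is, by inspection, a quadratic-plus-linear functional on $C^\infty(M)$: the first term is a constant, the second is linear in $u$, and the third is the quadratic form $u \mapsto \int_M u P_f u\, dv^g$ associated to the positive operator $P_f$. The plan is to exploit positivity of $P_f$ to show this quadratic form is coercive on $H^{1,2}(M)$, deduce that $J_f$ is bounded below and attains its infimum on $H^{1,2}(M)$, and then recover smoothness and uniqueness of the minimizer from elliptic regularity and invertibility of $P_f$. Boundedness of $\nu$ from below will follow from that of $\mu$ via the identity $I_f(u) = J_f(u) + u(p)$ of Lemma \ref{lemma_Ig2}: if $u \in C^\infty(M)$ with $u(p) = 0$, then $I_f(u) = J_f(u) \geq \mu$, hence $\nu \geq \mu > -\infty$.

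\textbf{Step 1: coercivity.} Since $P_f$ is positive, its lowest eigenvalue $\lambda_1 > 0$, so $\int_M u P_f u\, dv^g \geq \lambda_1 \int_M u^2\, dv^g$ for all $u$. Combined with $\int_M u P_f u\, dv^g = \int_M (|\diff u|^2 + f u^2)\, dv^g \geq \int_M |\diff u|^2\, dv^g - \|f\|_{L^\infty} \int_M u^2\, dv^g$, a standard interpolation gives a constant $c > 0$ with $\int_M u P_f u\, dv^g \geq c\, \|u\|_{H^{1,2}(M)}^2$. Then for $u \in C^\infty(M)$,
\begin{displaymath}
J_f(u) \geq c\,\|u\|_{H^{1,2}}^2 - 2\|F_\eta\|_{L^2}\,\|u\|_{L^2} + \text{const},
\end{displaymath}
which is bounded below (complete the square), proving $\mu > -\infty$.

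\textbf{Step 2: existence and regularity of the minimizer.} Take a minimizing sequence $(u_k) \subset C^\infty(M)$ for $J_f$. By the coercivity estimate it is bounded in $H^{1,2}(M)$, hence has a subsequence converging weakly in $H^{1,2}$ and strongly in $L^2$ (Rellich) to some $u \in H^{1,2}(M)$; by weak lower semicontinuity of the quadratic form and strong $L^2$-convergence of the linear term, $J_f(u) \leq \mu$, so $J_f(u) = \mu$. The Euler--Lagrange equation is $P_f u = -F_\eta$ weakly; since $F_\eta \in C^\infty(M)$ and $P_f$ is elliptic, elliptic regularity gives $u \in C^\infty(M)$, and the density of $C^\infty(M)$ in $H^{1,2}(M)$ ensures $J_f(u) = \mu = \inf_{C^\infty(M)} J_f$. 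Uniqueness: if $u, u'$ both minimize, both solve $P_f u = -F_\eta$, so $P_f(u - u') = 0$; by invertibility of $P_f$ on $C^\infty(M)$ (which is where positivity is used again), $u = u'$.

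\textbf{Main obstacle.} The only genuinely delicate point is the regularity bootstrap: one must be careful that the Euler--Lagrange equation really is $P_f u = -F_\eta$ in the distributional sense (this uses that $F_\eta$ is smooth on all of $M$, including at $p$, as noted in Section \ref{variational}), and then invoke interior elliptic regularity on the closed manifold. Everything else is the routine direct method in the calculus of variations, made to work by the positivity hypothesis on $P_f$.
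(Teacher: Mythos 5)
Your proposal is correct and follows essentially the same route as the paper: the direct method with positivity of $P_f$, Rellich compactness, weak lower semicontinuity, the Euler--Lagrange equation $P_fu=-F_\eta$ with elliptic regularity, invertibility of $P_f$ for uniqueness, and Lemma \ref{lemma_Ig2} to get $\nu\geq\mu$. The only (harmless) difference is cosmetic: you establish an explicit coercivity bound $\int_M uP_fu\,dv^g\geq c\|u\|_{H^{1,2}}^2$ by interpolating the eigenvalue bound with the identity $\int_M uP_fu\,dv^g=\int_M(|\diff u|^2+fu^2)\,dv^g$ and then complete the square, whereas the paper reaches the same $L^2$- and $H^{1,2}$-bounds by a contradiction argument along (minimizing) sequences.
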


\begin{proof}
Assume that there exists a sequence 
$(u_k)_{k\in\mN}$ in $C^{\infty}(M)$ such that 
$J_f(u_k)\to-\infty$ as $k\to\infty$. 
Since $P_f$ is a positive operator, there exists 
$A>0$ such that 
for all $k\in\mN$ we have 
\begin{displaymath}
\int_M u_kP_fu_k\,dv^g\geq A\|u_k\|_{L^2(M)}^2\geq 0.
\end{displaymath} 
From our assumption and the definition 
of $J_f$ it follows 
that $\int_{M}u_kF_{\eta}\,dv^g\to-\infty$ 
as $k\to\infty$. 
On the other hand 
with H\"older's inequality we have 
for all $k\in\mN$ 
\begin{displaymath}
\Big|\int_{M}u_kF_{\eta}\,dv^g\Big|
\leq
\|F_{\eta}\|_{L^2(M)}\,
\|u_k\|_{L^2(M)}.
\end{displaymath}
Thus we have $\|u_k\|_{L^2(M)}\to\infty$ 
as $k\to\infty$ and thus 
\begin{displaymath}
J_f(u_k)\geq
\int_{M\setminus\{p\}}
\eta r^{2-n}F_{\eta}\,dv^g
-2\|F_{\eta}\|_{L^2(M)}\,\|u_k\|_{L^2(M)}
+A\|u_k\|_{L^2(M)}^2\to\infty
\end{displaymath}
as $k\to\infty$, which is a contradiction. 
Thus we have $\mu>-\infty$. 
Next assume that there exists a sequence 
$(u_k)_{k\in\mN}$ in $C^{\infty}(M)$ such that 
for every $k\in\mN$ we have $u_k(p)=0$ and 
$I_f(u_k)\to-\infty$ as $k\to\infty$. 
By Lemma \ref{lemma_Ig2} we conclude 
$J_f(u_k)\to-\infty$ which is a contradiction. 
Thus we have $\nu>-\infty$. 

Let $(u_k)_{k\in\mN}$ be a sequence in 
$C^{\infty}(M)$ such that 
$J_f(u_k)\to\mu$ as $k\to\infty$. 
As above it follows that $(u_k)_{k\in\mN}$ 
is bounded in $L^2(M)$. 
Since for all $k\in\mN$ we have 
\begin{align*}
J_f(u_k)&=
\int_{M\setminus\{p\}}
\eta r^{2-n}F_{\eta}\,dv^g
+2\int_{M}
u_kF_{\eta}\,dv^g\\
&{}+\int_{M}|\diff u_k|^2\,dv^g
+\int_{M}fu_k^2\,dv^g,
\end{align*}
it follows that the sequence 
$(|\diff u_k|)_{k\in\mN}$ 
is bounded in $L^2(M)$ 
and thus that $(u_k)_{k\in\mN}$ 
is bounded in $H^{1,2}(M)$. 
Since $H^{1,2}(M)$ is reflexive 
there exists $u\in H^{1,2}(M)$ such that 
after passing to a subsequence we have 
$u_k\to u$ weakly in $H^{1,2}(M)$. 
Furthermore since the embeddings of $H^{1,2}(M)$ 
into $L^1(M)$ and into $L^2(M)$ are compact 
we obtain after passing again to sub-sequences 
that $u_k\to u$ strongly in $L^1(M)$ 
and in $L^2(M)$. 
For every $k\in\mN$ we have 
\begin{align*}
0&\leq\int_{M}|\diff u-\diff u_k|^2\,dv^g\\
&=\int_{M}|\diff u|^2\,dv^g
+\int_{M}|\diff u_k|^2\,dv^g
-2\int_{M}g(\diff u,\diff u_k)\,dv^g.
\end{align*}
By weak convergence in $H^{1,2}(M)$ the third 
term on the right hand side converges to 
$-2\int_{M}|\diff u|^2\,dv^g$ as $k\to\infty$. 
It follows that  
\begin{displaymath}
\int_{M}|\diff u|^2\,dv^g
\leq\liminf_{k\to\infty}
\int_{M}|\diff u_k|^2\,dv^g.
\end{displaymath}
Since the sequence $(u_k)_{k\in\mN}$ converges 
strongly to $u$ in $L^1(M)$ 
and in $L^2(M)$ we have 
\begin{displaymath}
\int_{M}fu_k^2\,dv^g
\to\int_{M}fu^2\,dv^g,\quad 
\int_{M}u_kF_{\eta}\,dv^g
\to\int_{M}uF_{\eta}\,dv^g
\end{displaymath}
as $k\to\infty$. 
It follows that 
\begin{displaymath}
J_f(u)\leq\liminf_{k\to\infty}J_f(u_k)=\mu
\end{displaymath}
and therefore $J_f(u)=\mu$. 
For every $\varphi\in C^{\infty}(M)$ we have 
\begin{displaymath}
0
=\frac{d}{dt}J_f(u+t\varphi)\big|_{t=0}
=2\int_{M}\varphi F_{\eta}\,dv^g
+2\int_{M}\varphi P_fu\,dv^g
\end{displaymath}
and therefore $P_fu=-F_{\eta}$. 
Using standard results in regularity theory 
(see e.\,g.\,\cite{gilbarg.trudinger:77}) 
we see from this equation that $u$ is smooth. 
We also see that $u$ is the unique minimizer 
of $J_f$ since $P_f$ is invertible on 
$C^{\infty}(M)$. 
\end{proof}

\begin{lemma} \label{lemma_mu=-m}
We have $\mu=-m_f$.
\end{lemma}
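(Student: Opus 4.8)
The plan is to use the smooth minimizer $u$ produced in Lemma \ref{existu}, identify it with $G_f-\eta r^{2-n}$, read off its value at $p$ from the expansion \eqref{expansion}, and then invoke Lemma \ref{lemma_Ig2}.

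First I would recall that by Lemma \ref{existu} the infimum $\mu$ is attained at a smooth function $u$ satisfying $P_f u=-F_\eta$ and $\mu=J_f(u)$. Since $\supp(\eta)\subset U$ and $f\equiv 0$ on $U$, we have $P_f(\eta r^{2-n})=\Delta_g(\eta r^{2-n})=F_\eta$ on $M\setminus\{p\}$, hence $P_f(\eta r^{2-n}+u)=0$ on $M\setminus\{p\}$. This is precisely the function constructed in the proof of Proposition \ref{green_function}(2): there one sets $v:=P_f^{-1}(F_\eta)$, which here equals $-u$, and $G_f=\eta r^{2-n}-v=\eta r^{2-n}+u$. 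By the uniqueness part of Proposition \ref{green_function} we conclude $G_f=\eta r^{2-n}+u$.

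Next, because $\eta\equiv\frac{1}{(n-2)\omega_{n-1}}$ on $B(p,\delta)$, near $p$ we have $\eta r^{2-n}=\frac{1}{(n-2)\omega_{n-1}r^{n-2}}$, so subtracting this from the expansion \eqref{expansion} gives $u(x)=G_f(x)-\eta(x)r^{2-n}=m_f+o(1)$ as $x\to p$; since $u$ is continuous, $u(p)=m_f$. Then I would evaluate $I_f(u)$: since $\eta r^{2-n}+u=G_f$ and $P_f G_f=0$ holds pointwise on $M\setminus\{p\}$, the integrand of $I_f(u)$ vanishes identically off $p$, so $I_f(u)=0$. Combining with Lemma \ref{lemma_Ig2} we get $0=I_f(u)=J_f(u)+u(p)=\mu+m_f$, hence $\mu=-m_f$. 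As a byproduct $\mu=J_f(u)=J_f(G_f-\eta r^{2-n})$, which is the remaining equality claimed in Theorem \ref{maintheorem}.

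There is no serious analytic obstacle in this lemma; essentially everything has been set up already. The only points requiring care are the bookkeeping identification $u=G_f-\eta r^{2-n}$ with the object built in Proposition \ref{green_function} (using $f\equiv 0$ near $p$ so that $P_f$ and $\Delta_g$ agree on $\eta r^{2-n}$), and the observation that the regularized integral $\int_{M\setminus\{p\}}$ defining $I_f(u)$ genuinely equals $0$ — i.e. that one may use pointwise $P_f$-harmonicity of $G_f$ on $M\setminus\{p\}$ rather than the distributional identity $P_f G_f=\delta_p$. This is the step I would state most explicitly.
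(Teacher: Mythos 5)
Your proposal is correct and follows essentially the same route as the paper: identify $\eta r^{2-n}+u$ with $G_f$ via the invertibility of $P_f$ (the paper argues this by noting $G_f-\eta r^{2-n}-u$ extends smoothly and is annihilated by $P_f$, which is the same point as your identification with the construction in Proposition \ref{green_function}), read off $u(p)=m_f$ from the expansion, and conclude via Lemma \ref{lemma_Ig2} together with $I_f(u)=\int_{M\setminus\{p\}}G_fP_fG_f\,dv^g=0$. No gaps; the care you flag about using the pointwise identity $P_fG_f=0$ on $M\setminus\{p\}$ in the regularized integral is exactly what the paper's computation uses as well.
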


\begin{proof}
Define $v:=u+\eta r^{2-n}$. 
Then $G_f-v$ has a smooth extension to all of $M$ 
and on $M\setminus\{p\}$ we have 
\begin{displaymath}
P_f(G_f-v)=P_f(G_f-\eta r^{2-n}-u)
=-F_{\eta}-P_fu=0.
\end{displaymath}
Since $P_f$ is invertible on smooth functions, 
we have $v=G_f$. 
It follows that $u(p)=m_f$ and therefore 
\begin{displaymath}
\mu=J_f(u)=I_f(u)-m_f
=\int_{M\setminus\{p\}}G_fP_fG_f\,dv^g-m_f=-m_f.
\end{displaymath}
This ends the proof. 
\end{proof}

\noindent We are now able to prove the result:
\begin{lemma}
\label{lemma_mu=nu}
We have 
\begin{displaymath}
\mu=\nu.
\end{displaymath}
\end{lemma}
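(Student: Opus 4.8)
The plan is to prove $\mu = \nu$ by establishing the two inequalities separately, using Lemma \ref{lemma_Ig2} as the bridge between $I_f$ and $J_f$, and the minimizer $u$ from Lemma \ref{existu} as the main building block.

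\smallskip
\noindent\textbf{The inequality $\nu \le \mu$.} By Lemma \ref{existu} there is a unique smooth minimizer $u$ of $J_f$, and by the proof of Lemma \ref{lemma_mu=-m} we know $u(p) = m_f$. The obvious candidate competitor for $\nu$ is $u$ itself, but $u(p)$ need not vanish, so I would shift it: set $w := u - \chi\cdot u(p)$ where $\chi$ is a smooth cutoff equal to $1$ near $p$ and supported in $U$ (so that $w(p) = 0$ and $w$ is an admissible competitor for $\nu$). Then $\nu \le I_f(w) = J_f(w) + w(p) = J_f(w)$ by Lemma \ref{lemma_Ig2}. It remains to show $J_f(w)$ can be made arbitrarily close to $\mu = J_f(u)$; equivalently, that one can choose the cutoff so that $J_f(u - \chi\, u(p))$ is as close to $J_f(u)$ as we like. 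If $m_f = u(p) = 0$ this is immediate. If not, I would let the cutoff $\chi$ concentrate near $p$: since $\eta r^{2-n}$ is the singular part and $w + \eta r^{2-n} = G_f - \chi\, u(p)$, shrinking the support of $\chi$ makes the perturbation $\chi\, u(p)$ small in $H^{1,2}$ (the gradient term $|\diff\chi|^2$ integrates to something small as the support shrinks, exactly as in the cut-off formula \eqref{cut_off_formula}), hence $J_f(w) \to J_f(u) = \mu$. This gives $\nu \le \mu$.

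\smallskip
\noindent\textbf{The inequality $\mu \le \nu$.} Conversely, take any $u \in C^\infty(M)$ with $u(p) = 0$; then $u$ is admissible for $J_f$ as well, so $\mu \le J_f(u) = I_f(u) - u(p) = I_f(u)$ by Lemma \ref{lemma_Ig2} and $u(p)=0$. Taking the infimum over all such $u$ yields $\mu \le \nu$. Combining the two inequalities gives $\mu = \nu$, and together with Lemmas \ref{lemma_mu=-m} and \ref{lemma_mu=nu}'s setup we also get $\nu = \mu = -m_f = J_f(G_f - \eta r^{2-n})$, since $G_f - \eta r^{2-n} = u$ is exactly the minimizer.

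\smallskip
\noindent\textbf{Main obstacle.} The delicate point is the $\nu \le \mu$ direction: the naive competitor $u$ for the $\nu$-problem fails the constraint $u(p)=0$, and one must check that correcting it by a concentrating cutoff perturbs $J_f$ negligibly. The key estimate is that $\int_M |\diff(\chi\, u(p))|^2\,dv^g = u(p)^2 \int_M |\diff\chi|^2\,dv^g \to 0$ as $\supp(\chi) \to \{p\}$ (choosing, e.g., logarithmic cutoffs on thin annuli in the flat region $U$), together with the fact that the cross terms and the zeroth-order terms in $J_f$ involve $L^2$-norms that also vanish. An alternative, cleaner route avoiding cutoffs: directly show that the infimum defining $\nu$ over the affine constraint $\{u(p)=0\}$ equals the unconstrained infimum $\mu$ because the unconstrained minimizer, after subtracting its value at $p$, only changes the functional by an amount controllable through the explicit form of $J_f$ and the freedom in $\eta$. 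Either way, the heart of the matter is quantifying that the pointwise constraint at the single point $p$ does not raise the infimum.
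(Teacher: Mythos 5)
Your proposal is correct and follows essentially the same route as the paper: the inequality $\mu\leq\nu$ is proved identically via Lemma \ref{lemma_Ig2}, and the inequality $\nu\leq\mu$ is handled by perturbing the minimizer $u$ of $J_f$ with a cutoff concentrating at $p$, using that $\int_M|\diff\chi_s|^2\,dv^g\lesssim s^{n-2}\to 0$ for $n\geq 3$. The only (cosmetic) difference is that you correct additively, taking $u-u(p)\chi_s$ with $\chi_s\equiv 1$ near $p$, whereas the paper truncates multiplicatively, taking $u\chi_s$ with $\chi_s\equiv 0$ near $p$ and invoking the cut-off formula \eqref{cut_off_formula}; both arguments rest on the same capacity estimate and yield the same conclusion.
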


\noindent Together with Lemma \ref{lemma_mu=-m} 
this proves Theorem \ref{maintheorem}
\begin{proof}
In order to show "$\mu\leq\nu$" 
let $\varepsilon>0$ and let 
$u\in C^{\infty}(M)$ such that $u(p)=0$ 
and $I_f(u)\leq\nu+\varepsilon$. 
Then we have $J_f(u)=I_f(u)\leq\nu+\varepsilon$ 
and thus $\mu\leq\nu+\varepsilon$. 

In order to show "$\mu\geq\nu$" let $\varepsilon>0$ 
and let 
$u\in C^{\infty}(M)$ such that 
$J_f(u)=\mu$. 
For $s>0$ let 
$\chi_s$: $M\to[0,1]$ be a smooth function 
such that 
$\chi_s\equiv 0$ on $B(p,s)$, 
$\chi_s\equiv 1$ on 
$M\setminus B(p,2s)$ 
and $|\diff\chi_s|\leq\frac{2}{s}$. 
We write $A_s:=B(p,2s)\setminus B(p,s)$ and 
we obtain by (\ref{cut_off_formula}) 
\begin{align*}
\int_{M} u\chi_sP_f(u\chi_s)\,dv^g&=
\int_{M} (u^2|\diff\chi_s|^2+\chi_s^2uP_fu)\,dv^g\\
&\leq \frac{4}{s^2}\int_{A_s}u^2\,dv^g
+\int_{M}\chi_s^2uP_fu\,dv^g.
\end{align*}
Since there exists $C>0$ such that for all $s$ 
we have $\vol(A_s)\leq Cs^n$, the first term on 
the right hand side tends to $0$ as $s\to 0$. 
We conclude that 
\begin{displaymath}
\lim_{s\to 0} J_f(u\chi_s)\leq J_f(u).
\end{displaymath}
Thus we can choose $s$ so close to $0$ 
that we have 
$J_f(u\chi_s)\leq\mu+\varepsilon$. 
Since we have $\chi_s(p)=0$ the left 
hand side is equal to $I_f(u\chi_s)$. 
It follows that $\nu\leq\mu+\varepsilon$. 
\end{proof}

Finally we ask whether the infimum $\nu$ is 
attained. 
We immediately obtain the following answer.

\begin{lemma}
Let $u\in C^{\infty}(M)$ be the unique smooth 
function with $J_f(u)=\mu$ given by Lemma 
\ref{existu}. 
\begin{enumerate}[1.]
\item If $u(p)=0$, then there is exactly one 
$w\in C^{\infty}(M)$ with $w(p)=0$ and 
$I_f(w)=\nu$, namely $w=u$. 
\item If $u(p)\neq0$, then there is no 
$w\in C^{\infty}(M)$ with $w(p)=0$ and 
$I_f(w)=\nu$. 
\end{enumerate}
\end{lemma}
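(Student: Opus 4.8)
The plan is to reduce the whole statement to three facts already in hand: the identity $I_f(w) = J_f(w) + w(p)$ from Lemma~\ref{lemma_Ig2}, the equality $\mu = \nu$ from Lemma~\ref{lemma_mu=nu}, and the uniqueness of the minimizer of $J_f$ from Lemma~\ref{existu}. The key intermediate observation I would establish is the following dichotomy: among functions $w \in C^{\infty}(M)$ with $w(p) = 0$, the value $I_f(w)$ equals $\nu$ if and only if $w = u$. Indeed, for such $w$ one has $I_f(w) = J_f(w) + w(p) = J_f(w) \geq \mu = \nu$, so $I_f(w) = \nu$ forces $J_f(w) = \mu$, and by the uniqueness part of Lemma~\ref{existu} this happens exactly when $w = u$.

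Granting this observation, both parts are immediate. For part~1, if $u(p) = 0$, then $u$ itself satisfies the constraint $u(p) = 0$ and $I_f(u) = J_f(u) + u(p) = \mu = \nu$; by the dichotomy it is the only function with these two properties, so $w = u$ is the unique constrained minimizer of $I_f$. For part~2, if $u(p) \neq 0$, then $u$ violates the constraint, and the dichotomy says that no other function with $w(p) = 0$ can attain $I_f(w) = \nu$ either; hence the infimum $\nu$ is not attained on $\{\, w \in C^{\infty}(M) : w(p) = 0 \,\}$.

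I do not expect any real obstacle here: the lemma is a bookkeeping consequence of Lemmas~\ref{lemma_Ig2}, \ref{existu} and \ref{lemma_mu=nu}. The only point requiring a moment's care is the direction of the uniqueness argument — one must note that a constrained minimizer $w$ of $I_f$ is automatically an \emph{un}constrained minimizer of $J_f$ (because $I_f(w) = J_f(w)$ when $w(p) = 0$ and $\nu = \mu$), and only then apply the uniqueness of the $J_f$-minimizer from Lemma~\ref{existu} to conclude $w = u$.
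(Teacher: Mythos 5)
Your argument is correct and coincides with the paper's own proof: the observation that any $w$ with $w(p)=0$ and $I_f(w)=\nu$ satisfies $J_f(w)=I_f(w)=\nu=\mu$ (via Lemma \ref{lemma_Ig2} and Lemma \ref{lemma_mu=nu}), hence $w=u$ by the uniqueness in Lemma \ref{existu}, is exactly the paper's one-line reduction, from which both parts follow. No gaps.
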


\begin{proof}
If $w\in C^{\infty}(M)$ satisfies 
$w(p)=0$ and $I_f(w)=\nu$ then by 
Lemma \ref{lemma_Ig2} and Lemma \ref{lemma_mu=nu} 
we have $J_f(w)=\mu$ and thus $w=u$. 
Both 1.\,and 2.\,follow from this observation. 
\end{proof}

\section{Another proof of the case $P_f=L_g$}
\label{proof'}

We give an alternative proof of Theorem 
\ref{maintheorem} in the special case 
$f=\frac{n-2}{4(n-1)}\scal_g$. 
Let $(M,g)$ be a closed Riemannian manifold such that 
$g$ is flat on an open neighborhood $U$ of a fixed 
point $p\in M$ and assume that $Y(M,g)>0$. 
Then the mass $m(M,g)$ of $L_g$ at the point $p$ 
is well defined. 
Let $\delta>0$ such that $B(p,\delta)\subset U$ 
and let $\eta$ be a smooth function on $M$ 
such that $\eta\equiv\frac{1}{(n-2)\omega_{n-1}}$ 
on $B(p,\delta)$ and $\supp(\eta)\subset U$, 
where $\omega_{n-1}$ denotes the volume of $S^{n-1}$ 
with the standard metric. 
For every $u\in C^{\infty}(M)$ with $u(p)=0$ 
we define 
\begin{displaymath}
I_g(u):=\int_{M\setminus\{p\}}
(\eta r^{2-n}+u)L_g(\eta r^{2-n}+u)\,dv^g
\end{displaymath}
and
\begin{displaymath}
\nu:=\inf\{I_g(u)|\,u\in C^{\infty}(M),\,
u(p)=0\}. 
\end{displaymath}
We denote by $G$ the Green function for the 
conformal Laplacian $L_g$ at the point $p$. 
It is strictly positive on $M\setminus\{p\}$ 
by Proposition \ref{green_function}. 
Thus $\widetilde{g}:=G^{4/(n-2)}g$ is a 
Riemannian metric on $M\setminus\{p\}$. 
Furthermore for every $u\in C^{\infty}(M)$ 
with $u(p)=0$ the function 
\begin{displaymath}
\Phi_u:=(\eta r^{2-n}+u)G^{-1}
\end{displaymath}
has a smooth extension to all of $M$ and 
in an isometric chart on $U$ it has the expansion 
\begin{equation}
\label{w_tilde_expansion}
\Phi_u(x)=1-Ar^{n-2}+o(r^{n-2})\quad
\textrm{as }x\to p
\end{equation}
with $A:=(n-2)\omega_{n-1}m(M,g)$. 
We prove the following theorem. 

\begin{theorem}
For every $u\in C^{\infty}(M)$ with $u(p)=0$ 
we have 
\begin{equation}
\label{I_g_special_case}
I_g(u)
=\int_{M\setminus\{p\}}
|\diff\Phi_u|_{\widetilde{g}}^2\,
dv^{\widetilde{g}}
-m(M,g). 
\end{equation}
\end{theorem}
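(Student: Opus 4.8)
The plan is to relate $I_g(u)$ to a Dirichlet-type integral on the conformally rescaled manifold $(M\setminus\{p\},\widetilde g)$ and then extract the mass term from the asymptotics of $\Phi_u$ near $p$. First I would use the conformal covariance of the Yamabe operator, namely \eqref{L_g_conform} with $g'=\widetilde g=G^{4/(n-2)}g$ and conformal factor $u$ replaced by $G$, which gives $L_{\widetilde g}(G^{-1}\varphi)=G^{-(n+2)/(n-2)}L_g(\varphi)$ for all smooth $\varphi$. Applying this with $\varphi=\eta r^{2-n}+u$ on $M\setminus\{p\}$, so that $G^{-1}\varphi=\Phi_u$, and recalling $dv^{\widetilde g}=G^{2n/(n-2)}\,dv^g$, I get
\begin{displaymath}
I_g(u)=\int_{M\setminus\{p\}}\varphi\,L_g\varphi\,dv^g
=\int_{M\setminus\{p\}}\Phi_u\,L_{\widetilde g}\Phi_u\,dv^{\widetilde g}.
\end{displaymath}
So the problem reduces to computing $\int_{M\setminus\{p\}}\Phi_u L_{\widetilde g}\Phi_u\,dv^{\widetilde g}$ and showing it equals $\int_{M\setminus\{p\}}|\diff\Phi_u|^2_{\widetilde g}\,dv^{\widetilde g}-m(M,g)$.

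Next I would integrate by parts. On $M\setminus B(p,\varepsilon)$, writing $L_{\widetilde g}=\Delta_{\widetilde g}+\frac{n-2}{4(n-1)}\scal_{\widetilde g}$, the scalar curvature term drops out: since $\widetilde g=G^{4/(n-2)}g$ and $L_gG=0$ on $M\setminus\{p\}$, equation \eqref{scalgg'} forces $\scal_{\widetilde g}=0$ on $M\setminus\{p\}$, so $L_{\widetilde g}\Phi_u=\Delta_{\widetilde g}\Phi_u$ there. Then Green's formula gives
\begin{displaymath}
\int_{M\setminus B(p,\varepsilon)}\Phi_u\Delta_{\widetilde g}\Phi_u\,dv^{\widetilde g}
=\int_{M\setminus B(p,\varepsilon)}|\diff\Phi_u|^2_{\widetilde g}\,dv^{\widetilde g}
+\int_{\partial B(p,\varepsilon)}\Phi_u\,\partial_\nu\Phi_u\,ds^{\widetilde g},
\end{displaymath}
with $\nu$ the unit normal (in $\widetilde g$) pointing towards $p$. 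The bulk term converges to $\int_{M\setminus\{p\}}|\diff\Phi_u|^2_{\widetilde g}\,dv^{\widetilde g}$ as $\varepsilon\to0$ (this integral is finite because near $p$ one has $\Phi_u=1-Ar^{n-2}+o(r^{n-2})$ and $\widetilde g$ is asymptotically flat there, so $|\diff\Phi_u|^2_{\widetilde g}\,dv^{\widetilde g}$ is integrable). The whole content of the theorem is then in the boundary term: I must show
\begin{displaymath}
\lim_{\varepsilon\to0}\int_{\partial B(p,\varepsilon)}\Phi_u\,\partial_\nu\Phi_u\,ds^{\widetilde g}=-m(M,g).
\end{displaymath}

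For the boundary term I would work in the isometric chart on $U$ identifying a neighborhood of $p$ with a ball in $(\mR^n,\xi^n)$, where $r$ is the Euclidean distance and $G=\frac{1}{(n-2)\omega_{n-1}r^{n-2}}+m(M,g)+o(1)$. Using the expansion \eqref{w_tilde_expansion} for $\Phi_u$, one computes $\partial_r\Phi_u=-(n-2)Ar^{n-3}+o(r^{n-3})$, and the $\widetilde g$-normal derivative $\partial_\nu$ differs from $\partial_r$ by the conformal factor $G^{-2/(n-2)}$; likewise $ds^{\widetilde g}=G^{2(n-1)/(n-2)}\,ds^{\xi}$ on $\partial B(p,\varepsilon)$, and $ds^\xi$ on the Euclidean sphere of radius $\varepsilon$ has total mass $\omega_{n-1}\varepsilon^{n-1}$. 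Since near $p$ one has $G\sim\frac{1}{(n-2)\omega_{n-1}}r^{2-n}$, the combination $\partial_\nu\Phi_u\cdot ds^{\widetilde g}$ on $\partial B(p,\varepsilon)$ behaves, to leading order, like $-(n-2)A\varepsilon^{n-3}\cdot\big((n-2)\omega_{n-1}\big)^{-1}\varepsilon^{2-n}\cdot\big((n-2)\omega_{n-1}\big)^{-(n-1)}\varepsilon^{(2-n)(n-1)}$ times the sphere volume — and careful bookkeeping of these powers, together with $\Phi_u\to1$, should collapse everything to a constant independent of $\varepsilon$ equal to $-A/\big((n-2)\omega_{n-1}\big)=-m(M,g)$ by the definition of $A$. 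The main obstacle is precisely this power-counting and constant-tracking in the boundary integral: one has to be careful that the lower-order terms in both $G$ and $\Phi_u$ genuinely contribute nothing in the limit (which follows from the $o(\cdot)$ estimates and the fact that $\Phi_u$ extends smoothly across $p$, so no anomalous terms like $\nabla\Phi_u\cdot r$ survive), and that the conformal factors in the normal vector and the surface measure are combined with the right exponents. Once that computation is done, combining the three displayed identities yields \eqref{I_g_special_case}.
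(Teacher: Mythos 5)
Your strategy coincides with the paper's: pass to $\widetilde g=G^{4/(n-2)}g$ via the conformal covariance (\ref{L_g_conform}), use (\ref{scalgg'}) together with $L_gG=0$ to get $\scal_{\widetilde g}=0$, integrate by parts on $M\setminus B(p,\varepsilon)$, and read off the mass from the boundary term through the expansion (\ref{w_tilde_expansion}). However, the one step you yourself call the whole content of the theorem --- the boundary limit --- is not carried out correctly as written, on two counts. First, there is a direction/sign inconsistency: for the domain $M\setminus B(p,\varepsilon)$ the outer unit normal is $\widetilde\nu=-G^{-2/(n-2)}\partial_r$, i.e.\ it points \emph{towards} $p$, and Green's formula then reads $\int\Phi_u\Delta_{\widetilde g}\Phi_u\,dv^{\widetilde g}=\int|\diff\Phi_u|^2_{\widetilde g}\,dv^{\widetilde g}-\int_{\partial B(p,\varepsilon)}\Phi_u\partial_{\widetilde\nu}\Phi_u\,ds^{\widetilde g}$, with the boundary term tending to $+m(M,g)$. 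Your plus-sign version of Green's formula and your claimed limit $-m(M,g)$ correspond to a normal pointing \emph{away} from $p$, which contradicts the direction you state and also your later identification $\partial_\nu\approx G^{-2/(n-2)}\partial_r$; the final identity only comes out right because these two slips cancel.

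Second, the explicit leading-order bookkeeping you display is wrong. The normal contributes a factor $G^{-2/(n-2)}\sim\big((n-2)\omega_{n-1}\big)^{2/(n-2)}\varepsilon^{2}$ and the area element is $ds^{\widetilde g}=G^{2(n-1)/(n-2)}ds^g\sim\big((n-2)\omega_{n-1}\big)^{-2(n-1)/(n-2)}\varepsilon^{-2(n-1)}ds^g$, whereas your displayed product uses factors of size $G$ and $G^{n-1}$. With your exponents the powers of $\varepsilon$ do not cancel (one gets $\varepsilon^{-1+(n-1)(3-n)}$ rather than $\varepsilon^{0}$ after multiplying by the sphere area $\omega_{n-1}\varepsilon^{n-1}$), so the ``collapse to a constant'' you invoke fails as written. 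With the correct exponents one finds, for the normal pointing towards $p$, that $\Phi_u\partial_{\widetilde\nu}\Phi_u\,ds^{\widetilde g}\sim (n-2)A\big((n-2)\omega_{n-1}\big)^{-2}\varepsilon^{-(n-1)}ds^g$, and integrating over the sphere of $g$-area $\omega_{n-1}\varepsilon^{n-1}$ gives exactly $A/\big((n-2)\omega_{n-1}\big)=m(M,g)$; this is precisely the computation the paper carries out via the three expansions of $\widetilde\nu$, of $\Phi_u\partial_{\widetilde\nu}\Phi_u$ and of $ds^{\widetilde g}$. So the outline is the paper's proof, but the decisive power-counting needs to be redone with the correct conformal exponents and a consistent choice of normal.
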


\begin{proof}
Let $u\in C^{\infty}(M)$ with $u(p)=0$. 
We write $w:=\eta r^{2-n}+u$. 
By the conformal transformation law 
(\ref{L_g_conform}) for $L_g$ we obtain 
\begin{displaymath}
L_{\widetilde{g}}\Phi_u=
G^{-\frac{n+2}{n-2}}L_gw. 
\end{displaymath}
Since we have $dv^{\widetilde{g}}=G^{2n/(n-2)}dv^g$ 
and since by the conformal transformation law 
(\ref{scalgg'}) for the scalar curvature we have 
$\scal_{\widetilde{g}}=0$ 
it follows that 
\begin{displaymath}
I_g(u)
=\int_{M\setminus\{p\}} w L_g w\, dv^g
=\int_{M\setminus\{p\}}
\Phi_u\Delta_{\widetilde{g}}\Phi_u\,
dv^{\widetilde{g}}.
\end{displaymath}
Integrating by parts, 
 we have for every 
$\varepsilon>0$ 
\begin{displaymath}
\int_{M\setminus B(p,\epsilon)} 
\Phi_u\Delta_{\widetilde{g}}\Phi_u\,
dv^{\widetilde{g}}
=\int_{M\setminus B(p,\epsilon)} 
|\diff\Phi_u|_{\widetilde{g}}^2\,
dv^{\widetilde{g}}
-\int_{\partial B(p,\epsilon)}
\Phi_u\partial_{\widetilde{\nu}}\Phi_u\,
ds^{\widetilde{g}},
\end{displaymath}
where $ds^{\widetilde{g}}$ is the induced volume 
form on $\partial B(p,\epsilon)$ and where 
\begin{displaymath}
\widetilde{\nu}=-G^{-\frac{2}{n-2}}\partial_r
\end{displaymath}
is the outer unit normal vector field on the boundary 
of $M\setminus B(p,\epsilon)$. 
Using (\ref{w_tilde_expansion}) 
we compute the following expansions as $x\to p$
\begin{align*}
\widetilde{\nu}&=
-(\eta^{-\frac{2}{n-2}}r^2+o(r^2))\partial_r\\
\Phi_u\partial_{\widetilde{\nu}}\Phi_u
&=
A(n-2)\eta^{-\frac{2}{n-2}}r^{n-1}+o(r^{n-1})\\
ds^{\widetilde{g}}
&=G^{\frac{2(n-1)}{n-2}}ds^g
=(\eta^{\frac{2(n-1)}{n-2}}r^{-2(n-1)}
+o(r^{-2(n-1)}))ds^g.
\end{align*}
Thus we obtain
\begin{displaymath}
\lim_{\varepsilon\to 0}
\int_{\partial B(p,\epsilon)}
\Phi_u\partial_{\widetilde{\nu}}\Phi_u\,
ds^{\widetilde{g}}
=m(M,g)
\end{displaymath}
and the assertion follows. 
\end{proof}

We now obtain the following special case 
of Theorem \ref{maintheorem}. 

\begin{corollary}
We have $\nu=-m(M,g)$. 
The infimum is attained if and only if $m(M,g)=0$. 
\end{corollary}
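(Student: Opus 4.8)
The whole corollary will be read off from the identity $(\ref{I_g_special_case})$ just proved. Since $\int_{M\setminus\{p\}}|\diff\Phi_u|_{\widetilde g}^2\,dv^{\widetilde g}\ge 0$ for every $u\in C^\infty(M)$ with $u(p)=0$, that identity gives $I_g(u)\ge -m(M,g)$ for all such $u$, hence $\nu\ge -m(M,g)$. (One may also observe that $I_g$ and $\nu$ here are precisely the functional $I_f$ and the number $\nu$ of Section~\ref{variational} for $f=\tfrac{n-2}{4(n-1)}\scal_g$, so that the equality $\nu=-m(M,g)$ is already contained in Theorem~\ref{maintheorem}; but I would rather argue directly, to keep the present section self-contained.)

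For the opposite inequality I would produce a minimizing sequence for $\nu$. Put $u^{*}:=G-\eta r^{2-n}$. From the construction of the Green function in Proposition~\ref{green_function}, $u^{*}$ extends smoothly across $p$, and using $(\ref{expansion})$ together with $\eta\equiv\frac{1}{(n-2)\omega_{n-1}}$ near $p$ we get $u^{*}(p)=m(M,g)$, while by definition $\Phi_{u^{*}}=G^{-1}(\eta r^{2-n}+u^{*})=G^{-1}G\equiv 1$. If $m(M,g)=0$ then $u^{*}$ itself is admissible and $I_g(u^{*})=-m(M,g)$. In general, for small $s>0$ let $\chi_s$ be the cut-off used in Lemma~\ref{lemma_mu=nu} ($\chi_s\equiv 0$ on $B(p,s)$, $\chi_s\equiv 1$ on $M\setminus B(p,2s)$, $|\diff\chi_s|\le 2/s$) and set $u_s:=\chi_s u^{*}\in C^\infty(M)$, so $u_s(p)=0$. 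A one-line computation gives $\Phi_{u_s}-1=-(1-\chi_s)G^{-1}u^{*}$, supported in $B(p,2s)$; using the expansions $G=\frac{1}{(n-2)\omega_{n-1}r^{n-2}}+O(1)$ and $|\diff G|=O(r^{1-n})$ near $p$, the bound on $\diff\chi_s$, and $\vol(B(p,2s))\le Cs^n$, one checks that $\int_{M\setminus\{p\}}|\diff\Phi_{u_s}|_{\widetilde g}^2\,dv^{\widetilde g}\to 0$ as $s\to 0$. By $(\ref{I_g_special_case})$ this yields $I_g(u_s)\to -m(M,g)$, so $\nu\le -m(M,g)$, and hence $\nu=-m(M,g)$.

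For the statement on attainment, suppose $u\in C^\infty(M)$ satisfies $u(p)=0$ and $I_g(u)=\nu=-m(M,g)$. By $(\ref{I_g_special_case})$ this forces $\int_{M\setminus\{p\}}|\diff\Phi_u|_{\widetilde g}^2\,dv^{\widetilde g}=0$, so $\Phi_u$ is locally constant on the connected open set $M\setminus\{p\}$; since $\Phi_u$ extends continuously to $M$ with $\Phi_u(p)=1$ by $(\ref{w_tilde_expansion})$, we get $\Phi_u\equiv 1$, i.e.\ $\eta r^{2-n}+u=G$, i.e.\ $u=u^{*}$. As $u^{*}(p)=m(M,g)$, such an admissible minimizer exists exactly when $m(M,g)=0$, and then it is unique, namely $u=u^{*}$; conversely, for $m(M,g)=0$ we have already seen that $u^{*}$ attains $\nu$.

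The only point needing a little care is the convergence $\int_{M\setminus\{p\}}|\diff\Phi_{u_s}|_{\widetilde g}^2\,dv^{\widetilde g}\to 0$: rewriting the integral in terms of $g$ via $dv^{\widetilde g}=G^{2n/(n-2)}dv^g$ turns it into $2\int|\diff\chi_s|^2(u^{*})^2\,dv^g+2\int_{B(p,2s)}(1-\chi_s)^2G^2|\diff(G^{-1}u^{*})|^2\,dv^g$, and both terms vanish as $s\to 0$ by the familiar trade-off between the $1/s$ size of $\diff\chi_s$ and the $s^n$ volume of the annulus $B(p,2s)\setminus B(p,s)$, exactly as in Lemma~\ref{lemma_mu=nu}. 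Since this — and indeed the equality $\nu=-m(M,g)$ as a whole — can simply be quoted from Theorem~\ref{maintheorem}, there is in fact no serious obstacle here.
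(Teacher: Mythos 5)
Your proof is correct and follows essentially the same route as the paper: everything is read off from the identity (\ref{I_g_special_case}), and your cut-off functions $\chi_s u^{*}$ with $u^{*}=G-\eta r^{2-n}$ simply make explicit the minimizing sequence the paper leaves implicit when it asserts that the first term on the right hand side can be made as small as one wants. Your attainment argument (vanishing Dirichlet energy forces $\Phi_u\equiv 1$, hence $u=G-\eta r^{2-n}$, which is admissible precisely when $m(M,g)=0$) is the paper's argument in a marginally different order, using the value $u^{*}(p)=m(M,g)$ instead of reading $A=0$ directly from (\ref{w_tilde_expansion}).
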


\begin{proof}
It follows from (\ref{I_g_special_case}) 
that $\nu=-m(M,g)$ since one can choose 
$u\in C^{\infty}(M)$ with $u(p)=0$ in such a way 
that the first term on the right hand side 
becomes as small as one wants. 
If the infimum is attained at $u\in C^{\infty}(M)$, 
then $\Phi_u$ is constant and by 
(\ref{w_tilde_expansion}) we conclude $m(M,g)=0$. 
On the other hand if $m(M,g)=0$, then 
$G-\eta r^{2-n}$ has a smooth extension $u$ to all 
of $M$ satisfying $u(p)=0$. 
Then with the notation from above we have $w=G$ 
and $\Phi_u=1$ and therefore $I_g(u)=-m(M,g)$.
\end{proof}

\section{Several applications}
\label{applications}

\subsection{Application 1: Positive mass theorem on spin manifolds} 
\label{section_pmt_spin}

Let $(M,g)$ be a closed Riemannian manifold with 
positive Yamabe constant $Y(M,g)$ 
which means that the operator 
$ L_g:= \Delta_g + \frac{n-2}{4(n-1)} \scal_g$ is 
positive (see Paragraph \ref{yamabeop}). 
We assume that $g$ is flat on an open neighborhood 
$U$ of $p \in M$. 
Furthermore we assume in this section that $M$ 
is a spin manifold with a fixed orientation and 
a fixed spin structure. 
We denote by $G$ the Green function of $L_g$ and by 
$m(M,g)$ the associated mass. 
In this section we prove the following positive 
mass theorem for spin manifolds. 

\begin{theorem}
\label{pmt_spin}
Let $(M,g)$ be a closed Riemannian spin manifold 
with positive Yamabe constant $Y(M,g)$ 
such that $g$ is flat on an open 
neighborhood of a point $p\in M$. 
Then we have $m(M,g)\geq0$. 
Furthermore we have $m(M,g)=0$ if and only if 
$(M,g)$ is conformally equivalent to 
$(S^n,\sigma^n)$. 
\end{theorem}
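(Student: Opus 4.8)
The strategy is to use the variational characterization from Theorem \ref{maintheorem} together with the spin Dirac operator and Witten's integral identity, but reorganized through the functional $I_g$. Set $N := M \setminus \{p\}$ equipped with the complete metric $\widetilde{g} := G^{4/(n-2)}g$, which is scalar-flat and asymptotically Euclidean near the puncture $p$ (the end corresponds to $x \to p$, and in the isometric chart on $U$ the factor $G$ behaves like the standard fundamental solution). The key point is that $m(M,g)$ is the ADM-type mass of this asymptotically flat end, as follows from the expansion \eqref{w_tilde_expansion} and the conformal relation between $G$, $G'$. First I would verify carefully that $(N, \widetilde{g})$ is indeed complete and asymptotically flat in the sense required for the spin positive mass argument: the only end is near $p$, the metric there is asymptotic to $\xi^n$ with the correct decay rate $r^{2-n}$ coming from the constant-term expansion of $G$, and the ADM mass of this end equals $m(M,g)$ up to the normalizing constant.

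\smallskip

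Next I would run Witten's argument on $(N,\widetilde{g})$. Since $M$ is spin and $N$ is an open subset, $N$ inherits a spin structure; since $\scal_{\widetilde{g}} = 0$ everywhere on $N$, the Lichnerowicz--Weitzenb\"ock formula $D^2 = \nabla^*\nabla + \tfrac14 \scal_{\widetilde{g}} = \nabla^*\nabla$ holds. One then solves the Dirac equation $D\psi = 0$ on $N$ with $\psi$ asymptotic to a constant spinor $\psi_0$ at the end, with $\psi - \psi_0$ decaying; this requires the usual weighted Sobolev analysis, for which the completeness and asymptotic flatness established in the first step are exactly what is needed. Integrating the Weitzenb\"ock identity over $N \setminus B(p,\varepsilon)$ and letting $\varepsilon \to 0$, the boundary term converges to a positive multiple of the ADM mass, so
\begin{displaymath}
c_n\, m(M,g) = \int_N |\nabla \psi|^2_{\widetilde{g}}\, dv^{\widetilde{g}} \geq 0
\end{displaymath}
for a positive constant $c_n$, giving $m(M,g) \geq 0$.

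\smallskip

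For the rigidity statement, equality $m(M,g)=0$ forces $\nabla\psi \equiv 0$, so $N$ carries a parallel spinor for every choice of asymptotic constant $\psi_0$; the space of parallel spinors then has maximal dimension, which by the standard holonomy classification forces $(N,\widetilde{g})$ to be flat, hence isometric to $\mathbb{R}^n$ with the Euclidean metric (here one uses that $N$ is simply connected at infinity and has one end). Unwinding the conformal change $\widetilde{g} = G^{4/(n-2)} g$ on $M \setminus \{p\}$, flatness of $\widetilde{g}$ plus the fact that $G$ is the Green function of $L_g$ forces $(M,g)$ to be conformally diffeomorphic to $(S^n, \sigma^n)$ — concretely, inverse stereographic projection identifies $(\mathbb{R}^n,\xi^n)$ conformally with $(S^n \setminus \{\text{pt}\}, \sigma^n)$, and the puncture fills in. I would alternatively note that this rigidity can be read off from the Corollary above: $m(M,g)=0$ iff the infimum $\nu$ is attained, iff $\Phi_u$ is constant, which matches the flat model.

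\smallskip

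\textbf{Main obstacle.} The analytically delicate step is the existence of the Witten spinor $\psi$ with the prescribed asymptotics on the \emph{punctured} manifold: one must set up the right weighted function spaces near $p$, check that $D$ is Fredholm (or at least surjective onto the relevant space) between them, and confirm the integration-by-parts and the convergence of the boundary integral to $m(M,g)$ with no lost terms — in particular that the $o(1)$ in the expansion \eqref{expansion} of $G$ and the $o(r^{n-2})$ in \eqref{w_tilde_expansion} are strong enough to kill all the error contributions. This is where one would either cite \cite{ammann.humbert:05} or \cite{witten:81} for the weighted-analysis machinery, or redo it; since the paper explicitly says this proof is ``not simpler'' than Ammann--Humbert, I expect the authors to invoke that reference for the hard analytic core and spend their effort on the conceptual reorganization via $I_g$.
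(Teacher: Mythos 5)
Your argument is essentially correct, but it is not the route taken in the paper: you run the classical Witten/Schoen argument on the stereographic blow-up $(M\setminus\{p\},\,\widetilde{g}=G^{4/(n-2)}g)$, identifying $m(M,g)$ with the ADM mass of that asymptotically flat end and then solving $D\psi=0$ with prescribed constant asymptotics in weighted spaces. The paper deliberately avoids all of that asymptotic analysis. It stays on the closed manifold $(M,g)$ and takes the Green function $\psi$ of the Dirac operator $D_g$ at $p$ (whose existence is elementary, since positivity of $L_g$ makes $D_g$ invertible on the closed manifold), normalized so that $|\psi|^{\frac{n-2}{n-1}}=\frac{1}{(n-2)\omega_{n-1}r^{n-2}}+o(1)$ near $p$; the function $u=|\psi|^{\frac{n-2}{n-1}}-\eta r^{2-n}$ is then used as a test function in the variational characterization $-m(M,g)=\inf I_g$ of Theorem \ref{maintheorem}. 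The Schr\"odinger--Lichnerowicz formula applied to $\psi'=|\psi|^{-1}\beta_{g,g'}\psi$ for $g'=|\psi|^{4/(n-1)}g$ gives $0=\int|\nabla\psi'|^2\,dv^{g'}+\frac{n-1}{n-2}I_g(u)$, hence $I_g(u)\le 0$ and $m(M,g)\ge 0$; the only technical point is the possible zero set of $\psi$, handled by a cutoff using that it has Hausdorff dimension at most $n-2$. For rigidity, the paper shows that when $m(M,g)=0$ the cutoff functions form a minimizing sequence for $J_g$, so they converge to the minimizer $G-\eta r^{2-n}$, forcing $|\psi|^{\frac{n-2}{n-1}}=G$; then every admissible $\psi_0$ yields a parallel spinor for $g'$, the spinor bundle of $M\setminus\{p\}$ is trivialized by parallel spinors, and the conclusion follows as in Ammann--Humbert. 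What your approach buys is conceptual familiarity and no need to worry about the zero set of $\psi$; what it costs is exactly the hard analytic core you flag (Fredholm theory in weighted spaces, convergence of the boundary integral), and note that \cite{ammann.humbert:05} cannot be cited for that machinery, since the whole point of that paper (and of the present one) is to bypass it --- you would have to go to Witten, Bartnik, Parker--Taubes or Lee--Parker. Finally, your parenthetical shortcut for rigidity via the Corollary of Section \ref{proof'} does not suffice as stated: attainment of the infimum only says $\Phi_u\equiv 1$, i.e.\ the minimizer is $G-\eta r^{2-n}$; it does not by itself force $\widetilde{g}$ to be flat, so the parallel-spinor (or equivalent) argument is still needed there.
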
 

\noindent This theorem solves the positive mass conjecture  in the particular case of spin manifolds. This was already known  by  the work of Witten \cite{witten:81}. 
Let us come back on the name "mass" used for $m(M,g)$ and more generally for the numbers  $m_f$ associated to the operators $P_f$. 
Set $g':= G^{\frac{4}{n-2}} g$. This new metric is defined on $M \setminus \{ p\}$. As observed by Schoen, the manifold $(M \setminus \{ p \}, g')$ is asymptotically flat. We will not explain in detail what this means, but asymptotically flat manifolds are the standard models for isolated system in general relativity. To each asymptotically flat manifold with positive $L^1$ scalar curvature one can associate a number called the ADM-mass of the manifold which is interpreted as the energy of the isolated system. For this reason, this number should be positive but this is far to be obvious from its mathematical definition. It was proven to be true 
e.\,g.\,on spin manifolds by Witten \cite{witten:81} and in dimension $n \in \{3,..., 7\}$ by Schoen and Yau \cite{schoen.yau:79} but the problem in its full 
generality is still open. 
In the particular case that the asymptotically flat manifold was obtained by blowing-up a closed manifold as above with the Green function of $L_g$ (this procedure is sometimes called {\em stereographic projection} since, starting with a closed manifold  $(M,g)$ conformally equivalent  to the standard sphere, then $(M \setminus \{ p \},g') = (\mR^n, \xi)$), Schoen proved that the number $m(M,g)$ is a positive multiple of the ADM mass of $(M \setminus \{ p \},g')$. This is the reason why the number $m(M,g)$ is called the mass.  In this special context, which is 
actually not restrictive, the positivity of the ADM mass, i.e. on $M$, is also open. 
 Schoen and Yau gave a proof when the manifold is locally conformally flat in \cite{schoen.yau:88}.
 Later, inspired by Witten's proof, Ammann and Humbert gave a very simple proof for spin manifolds which are conformally flat or of dimension $3$, $4$ or $5$ (see \cite{ammann.humbert:05}). 
This last method was adapted to other situations: Jammes \cite{jammes:11} obtained another proof of Schoen-Yau's theorem \cite{schoen.yau:88} for conformally flat manifolds of even dimension and Humbert, Raulot in \cite{humbert.raulot:09} could prove a positive mass theorem for the Paneitz operator.  
The proof we give here is quite similar to the one of Ammann and Humbert and not simpler but it allows to understand the crucial role played by the Green function of the Dirac operator in their proof. Namely, we prove that the norm of this Green function can be used as a test function in the variational characterization of the mass given by Theorem \ref{maintheorem}.  

Before we give the proof we recall some facts from 
spin geometry which we will need. 
Since $M$ is spin, 
for every Riemannian metric $g$ on $M$ we 
can define 
the spinor bundle $\Sigma^gM$ over $M$ which is 
a complex vector bundle of rank $2^{[n/2]}$ 
with a bundle metric $(.,.)$ and a connection 
$\nabla$. 
Smooth sections of $\Sigma^gM$ are called spinors. 
We denote by 
\begin{displaymath}
D_g:\quad \Gamma(\Si^gM)\to\Gamma(\Si^gM)
\end{displaymath}
the Dirac operator acting on spinors. 
For an introduction to the concepts of spin geometry 
the reader may consult the books 
\cite{lawson.michelsohn:89} or \cite{friedrich:00}. 
We will mainly use two important results. 
First, by the Schr\"odinger-Lichnerowicz formula 
we have for all $\psi\in\Gamma(\Si^gM)$ 
\begin{equation}
\label{schroed_lichn}
(D_g)^2\psi=\nabla^*\nabla\psi
+\frac{1}{4}\scal_g\psi,
\end{equation}
where $\nabla^*\nabla$ denotes the connection 
Laplacian on $\Sigma^gM$. 
Second, if $g'=w^{\frac{4}{n-1}}g$ is a metric 
conformal to $g$, where $w$ is a smooth 
positive function on $M$, then by 
\cite{hitchin:74}, \cite{hijazi:86} 
there exists an isomorphism of vector bundles 
\begin{displaymath}
\beta_{g,g'}:\quad\Sigma^gM
\to\Sigma^{g'}M
\end{displaymath}
which is a fiberwise isometry such that 
for all $\psi\in\Gamma(\Sigma^gM)$ we have 
\begin{equation}
\label{dirac_conform}
D_{g'}(w^{-1}\beta_{g,g'}\psi)=
w^{-\frac{n+1}{n-1}}\beta_{g,g'}D_g\psi.
\end{equation}
Furthermore one can show that 
for every element $\psi_0$ of the fiber 
$\Sigma^g_pM$ over $p$ there exists a unique Green 
function of $D_g$, i.\,e.\,a spinor $\psi$ 
on $M\setminus\{p\}$ such that for every 
$\varphi\in\Gamma(\Sigma^gM)$ we have 
\begin{displaymath}
\int_{M \setminus \{p\}} (\psi , D_g \phi) dv^g= 
(\psi_0, \phi(p)).
\end{displaymath}
Using our assumptions 
one can also write down the expansion of $\psi$ 
around $p$ 
similarly as for the Green function of 
$\Delta_g+f$ in Proposition \ref{green_function}. 
Namely we use that $g$ is flat on an open 
neighborhood $U$ 
of $p$ and we choose $\de>0$ such that 
$B(p,\de)\subset U$. 
We may assume that there exists an  
isometric chart 
$B(p,\de)\to B(0,\de)\subset\mR^n$
and that $\Sigma^gM$ 
is trivial on $B(p,\de)$. 
Since $L_g$ is positive, it is well known 
that $D_g$ is invertible. 
Using these facts Ammann and Humbert described the 
expansion of $\psi$ as follows 
(see \cite{ammann.humbert:05}).

\begin{lemma}  
Let $\psi_0\in \Si^g_p M$. 
Then there is a unique spinor 
$\psi$ on $M\setminus \{p\}$ 
such that $D_g\psi=0$ and such that for all 
$x\in B(p,\de)\cong B(0,\de)
\subset\mR^n$ we 
have in the above chart and trivialization 
\begin{equation}
\label{expansion_green_dirac} 
\psi|_{B(p,\de)}(x)= 
-\frac{1}{\omega_{n-1}}
\frac{x}{r^{n}} \cdot \psi_0 
+ \theta(x)
\end{equation}
where $\theta$ is a smooth spinor on $B(p,\de)$.
\end{lemma}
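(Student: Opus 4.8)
The plan is to construct $\psi$ exactly as the Green function $G_f$ was constructed in the proof of Proposition~\ref{green_function}: cut off the explicit fundamental solution of the flat Dirac operator near $p$, observe that the resulting error is a \emph{smooth} spinor on all of $M$, and remove it using the invertibility of $D_g$. Recall that $Y(M,g)>0$ makes $L_g$ positive, and, as already noted above, it is then classical that $D_g\colon\Gamma(\Si^gM)\to\Gamma(\Si^gM)$ is invertible; see \cite{ammann.humbert:05}.

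The first step is to pin down the local model and the normalization constant. In the isometric chart $B(p,\de)\cong B(0,\de)\subset\mR^n$, together with the chosen trivialization of $\Si^gM$, the metric is flat and $D_g$ becomes the Euclidean Dirac operator $D=\sum_i e_i\cdot\pa_i$, which satisfies $D^2=\Delta$ with $\Delta$ the nonnegative Laplacian used throughout this paper. For a constant spinor $\psi_0$ one has $D(r^{2-n}\psi_0)=(2-n)\,\tfrac{x}{r^n}\cdot\psi_0$, and since $\Delta(r^{2-n})=(n-2)\,\om_{n-1}\,\de_0$ in the distributional sense, applying $D$ once more gives $D\bigl(-\tfrac1{\om_{n-1}}\tfrac{x}{r^n}\cdot\psi_0\bigr)=\de_0\,\psi_0$. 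Thus $-\tfrac1{\om_{n-1}}\tfrac{x}{r^n}\cdot\psi_0$ is a fundamental solution of $D$ at the origin and, away from the origin, lies in the kernel of $D$; this is precisely the computation that forces the constant $-1/\om_{n-1}$ in (\ref{expansion_green_dirac}).

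For existence I would then globalize. Fix $\chi\in C^{\infty}(M)$ with $\chi\equiv1$ on $B(p,\de/2)$ and $\supp(\chi)\subset B(p,\de)$, and set $\si:=-\tfrac1{\om_{n-1}}\chi\,\tfrac{x}{r^n}\cdot\psi_0$ on $M\setminus\{p\}$, extended by $0$ elsewhere. Then $D_g\si$ is smooth on $M\setminus\{p\}$, vanishes on $B(p,\de/2)$ (where $\chi\equiv1$ and the model is $D$-harmonic off $p$) and on $M\setminus\supp(\chi)$, hence extends to a smooth spinor $\rho$ on all of $M$. Put $v:=D_g^{-1}\rho\in\Gamma(\Si^gM)$ and $\psi:=\si-v$. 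On $M\setminus\{p\}$ we get $D_g\psi=D_g\si-\rho=0$, while on $B(p,\de)$ the spinor $\psi+\tfrac1{\om_{n-1}}\tfrac{x}{r^n}\cdot\psi_0=-\tfrac1{\om_{n-1}}(\chi-1)\tfrac{x}{r^n}\cdot\psi_0-v$ is smooth, and this is the required $\th$. (One also checks that $D_g\psi=\psi_0\,\de_p$ as a distribution on $M$, so $\psi$ is the Green function of $D_g$ with pole $\psi_0$, which explains the normalization.)

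For uniqueness, if $\psi'$ is another spinor with the two stated properties, then $u:=\psi-\psi'$ satisfies $D_gu=0$ on $M\setminus\{p\}$, hence is smooth there, and it extends continuously across $p$ because the two singular terms cancel and the $\th$-parts are smooth; in particular $u\in L^2(M)$, so $D_gu$ is a distribution on $M$ lying in $H^{-1}(M)$ and supported at the single point $p$. Since for $n\ge3$ no nonzero distribution supported at a point belongs to $H^{-1}$, we conclude $D_gu=0$ on all of $M$, hence $u=0$ by invertibility of $D_g$. I expect the only genuinely delicate points to be getting the sign and constant right in the Euclidean fundamental solution (via $D^2=\Delta$ and the known fundamental solution of $\Delta$) and this removable-singularity argument; everything else is routine bookkeeping with cutoffs once invertibility of $D_g$ is granted.
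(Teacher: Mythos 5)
Your proof is correct and follows exactly the route the paper has in mind: the paper itself does not reprove this lemma (it refers to \cite{ammann.humbert:05}) but explicitly says the expansion is obtained ``similarly as for the Green function of $\Delta_g+f$ in Proposition \ref{green_function}'', i.e.\ by cutting off the Euclidean fundamental solution $-\tfrac{1}{\om_{n-1}}\tfrac{x}{r^n}\cdot\psi_0$ and correcting by $D_g^{-1}$ of the resulting smooth error, which is precisely your construction. Your normalization computation via $D^2=\Delta$ and your uniqueness argument (removing the point singularity, then using invertibility of $D_g$) are sound.
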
           

\noindent 
From now on we assume that 
\begin{equation}
\label{psi0norm}
|\psi_0|= \left( (n-2) \omega_{n-1} \right)^{-\frac{n-1}{n-2}}.
\end{equation} 
Then on $B(p,\de)$ we have 
by (\ref{expansion_green_dirac}) 
\begin{equation}
\label{exp_psi} 
| \psi (x) |^{\frac{n-2}{n-1}}
= \frac{1}{(n-2) \omega_{n-1} r^{n-2}} 
+ o(1)\quad\textrm{as }r\to0.
\end{equation}
\noindent Now, let $\eta$ and $F_{\eta}$ 
be defined as in Section \ref{variational}. 
By Theorem \ref{maintheorem} we have 
\begin{align}
\nonumber 
-m(M,g) &= \inf 
\{I_g(u)|\,u \in C^{\infty}(M),\,u(p)=0\}\\
\label{mass_I_g}
&= \inf 
\{J_g(u)|\,u\in C^{\infty}(M)\}
\end{align}  
where
\begin{align*}
I_g(u) &= 
\int_{M \setminus \{ p \} } 
(\eta r^{2-n} +u) L_g(   \eta r^{2-n} +u)\,dv^g ,\\
J_g(u) &= 
\int_{M\setminus\{p\}}
\eta r^{2-n}F_{\eta}\,dv^g
+2\int_{M}
uF_{\eta}\,dv^g
+\int_{M}uL_g u\,dv^g.
\end{align*} 
The function 
\begin{equation}
\label{u_test_function}
u:\quad M\to\mR,\quad u(x):=
\left\{
\begin{array}{ll}
|\psi(x)|^{\frac{n-2}{n-1} } - \eta(x) r(x)^{2-n}, &
\textrm{if }x\neq p\\
0, &\textrm{if }x=p
\end{array} 
\right.
\end{equation}
is smooth on the complement of the zero set of $\psi$. 

The idea for our proof of Theorem 
\ref{pmt_spin} is to use the characterization 
(\ref{mass_I_g}) of $m(M,g)$ and to use $u$ 
as a test function for our functional $I_g$. 
If $\psi$ has non-empty zero set, 
then $u$ is not smooth and we will approximate $u$ 
by a sequence of smooth functions. 
Since the zero set of $\psi$ has Hausdorff dimension 
at most $n-2$ (see \cite{baer:97}), the proof 
will also work in this case. 
This is the content of the following proposition. 

\begin{proposition}
\label{proposition_pmt_spin}
There exists a sequence $(u_k)_{k\in\mN}$ of smooth 
functions on $M$ such that $u_k(p)=0$ for all $k$ 
and $\lim_{k\to\infty}J_g(u_k)=
\lim_{k\to\infty}I_g(u_k)\leq 0$.
\end{proposition}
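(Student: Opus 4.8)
The strategy is to compute $I_g(u)$ (equivalently $J_g(u)$) for the test function $u$ built from the norm of the Green spinor $\psi$, show the result is $\leq 0$, and deal with the non-smoothness caused by the zero set of $\psi$ by a cut-off/approximation argument. Set $v := |\psi|^{\frac{n-2}{n-1}}$, so that $v = \eta r^{2-n} + u$ away from $p$ and the zero set, and by \eqref{exp_psi} the pair $(\eta,v)$ plays exactly the role of $(\eta r^{2-n}, \eta r^{2-n}+u)$ in the definition of $I_g$. On the complement of the zero set $Z$ of $\psi$ the function $v$ is smooth, so I would first work there and estimate $\int v\,L_g v$.

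\medskip
\noindent\textbf{Step 1: the Lichnerowicz computation.} Away from $Z$, write $v = |\psi|^{\frac{n-2}{n-1}}$ and compute $\Delta_g v$ using the Schrödinger--Lichnerowicz formula \eqref{schroed_lichn}. The standard computation (the Kato-type inequality combined with $|D_g\psi|=0$) gives pointwise
\begin{displaymath}
v\,L_g v \;=\; v\,\Delta_g v + \tfrac{n-2}{4(n-1)}\scal_g\,v^2 \;\leq\; -\,c_n\,|\psi|^{\frac{n-2}{n-1}-2}\bigl(|\nabla\psi|^2 - |\diff|\psi||^2\bigr) \;\leq\; 0
\end{displaymath}
for a positive constant $c_n$, away from $Z$; here one uses that $\psi$ is harmonic for $D_g$ and that the refined Kato inequality for the Dirac operator makes the bracket nonnegative. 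Hence on $M\setminus(\{p\}\cup Z)$ the integrand $v\,L_g v$ is $\leq 0$, and in fact it is integrable near $p$ because of the expansion \eqref{expansion_green_dirac}.

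\medskip
\noindent\textbf{Step 2: the boundary term at $p$.} Integrating $\int_{M\setminus B(p,\varepsilon)} v\,\Delta_g v$ by parts produces a boundary term over $\partial B(p,\varepsilon)$. Using the expansion \eqref{expansion_green_dirac} together with $|\psi_0|$ normalized by \eqref{psi0norm}, one checks exactly as in the proof of Theorem \ref{maintheorem} (the computation of $\int_{\partial B(p,\varepsilon)} u\,\partial_\nu(\eta r^{2-n})$) that this boundary term converges, as $\varepsilon\to 0$, to $-m(M,g)$ plus the $o(1)$ correction terms from the expansion. Combining with Step 1 gives, formally, $I_g(u) \leq -m(M,g) + (\text{nonpositive terms})$, hence $J_g(u) = I_g(u) \leq 0$.

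\medskip
\noindent\textbf{Step 3: handling the zero set $Z$ and producing the sequence.} This is the main technical obstacle. Since $v=|\psi|^{\frac{n-2}{n-1}}$ is merely Lipschitz-type near $Z$ (and $u$ is not globally smooth), one cannot directly plug $u$ into $I_g$. The remedy, using that $Z$ has Hausdorff dimension $\leq n-2$ (cited from \cite{baer:97}), is to choose cut-off functions $\chi_k$ supported away from a shrinking neighborhood of $Z$ with $\int |\diff\chi_k|^2 |v|^2 \to 0$ and $\int \chi_k^2\,v\,L_g v \to \int v\,L_g v$; such cut-offs exist precisely because the codimension of $Z$ is at least $2$ (capacity of $Z$ is zero). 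Apply the cut-off formula \eqref{cut_off_formula} to control $\int (\chi_k v)\,L_g(\chi_k v)$ in terms of $\int |\diff\chi_k|^2 v^2 + \int \chi_k^2\,v\,L_g v$, then set $u_k := \chi_k v - \eta r^{2-n}$ (suitably modified so that $u_k$ is smooth on all of $M$ and $u_k(p)=0$; near $p$ the spinor $\psi$ is nonvanishing and $\eta$ is constant, so $u_k$ extends smoothly across $p$). By Lemma \ref{lemma_Ig2} we have $I_g(u_k) = J_g(u_k) + u_k(p) = J_g(u_k)$. Passing to the limit $k\to\infty$, the right-hand side of the cut-off estimate tends to $\int_{M\setminus\{p\}} v\,L_g v \leq 0$ after accounting for the boundary term at $p$ as in Step 2, so $\lim_{k\to\infty} J_g(u_k) = \lim_{k\to\infty} I_g(u_k) \leq 0$, which is the claim.

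\medskip
\noindent The step I expect to cost the most care is Step 3: one must simultaneously (i) cut off around the lower-dimensional zero set $Z$ without losing mass in the energy, (ii) ensure each $u_k$ is globally smooth and vanishes at $p$, and (iii) make sure the cut-off near $Z$ does not interfere with the delicate boundary analysis at $p$ (which forces $\chi_k \equiv 1$ on a fixed neighborhood of $p$ for all large $k$). Steps 1 and 2 are essentially the Ammann--Humbert computation repackaged through Theorem \ref{maintheorem}, and I would keep them brief.
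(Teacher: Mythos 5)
Your overall strategy is the paper's: use $v:=|\psi|^{\frac{n-2}{n-1}}$ as (the singular part plus) a test function in the variational characterization of Theorem \ref{maintheorem}, and cut off around the nodal set of $\psi$ using B\"ar's bound \cite{baer:97}. Your Step 1 is a legitimate variant: the paper instead rescales conformally to $g'=|\psi|^{4/(n-1)}g$, applies \eqref{schroed_lichn} to the unit-norm spinor $\psi'$ (so that all boundary terms vanish because $|\psi'|\equiv 1$), and obtains the \emph{identity} \eqref{I_leq_0}, $0=\int|\nabla\psi'|^2dv^{g'}+\tfrac{n-1}{n-2}I_g(u)$, whereas your pointwise computation (which, done exactly, gives $L_gv=-\tfrac{n-2}{n-1}|\psi|^{-\frac{n}{n-1}}\bigl(|\nabla\psi|^2-\tfrac{n}{n-1}|\diff|\psi||^2\bigr)$, so you really do need the refined Kato constant $\tfrac{n-1}{n}$, not the crude one) yields only the inequality $vL_gv\le 0$. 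That is enough for this Proposition, though the paper's identity is what later drives the rigidity part of Theorem \ref{pmt_spin}.

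The genuine problem is your Step 2, which is both incorrect and unnecessary. The constant term of $v$ in its expansion at $p$ is $0$, not $m(M,g)$: by \eqref{exp_psi} one has $u(p)=0$, and the boundary integral computed as in Lemma \ref{lemma_Ig2} tends to $-u(p)=0$; you are conflating the test function $v$ with the Green function $G$ (whose constant term is the mass). Moreover the term $\int_{\partial B(p,\varepsilon)}v\,\partial_\nu v\,ds^g$ diverges like $\varepsilon^{2-n}$ and its finite part is not controlled by an $o(1)$ expansion, and the inference ``$I_g(u)\le -m(M,g)+(\text{nonpositive})$ hence $I_g(u)\le 0$'' presupposes $m(M,g)\ge 0$, i.e. is circular. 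None of this is needed: since $I_g$ is by definition the limit of $\int_{M\setminus B(p,\varepsilon)}vL_gv\,dv^g$ and your Step 1 makes the integrand pointwise nonpositive away from $Z$, you get directly $I_g(u_k)\le\int_M v^2|\diff\chi_k|^2\,dv^g$ from \eqref{cut_off_formula}, with no boundary analysis at $p$ at all; the mass enters only through Theorem \ref{maintheorem} via $-m(M,g)\le J_g(u_k)=I_g(u_k)$. Finally, in Step 3 you still owe the decay $\int v^2|\diff\chi_k|^2\to 0$: with the natural cutoffs ($|\diff\chi_k|\le 2k$ on a $1/k$-neighborhood of $Z$) and $v$ merely bounded this is only $O(1)$, and Hausdorff dimension $\le n-2$ alone does not give zero $2$-capacity in the borderline case; the paper closes this by using that $|\psi|^2$ is $C^1$ and vanishes on $Z$, hence $|\psi(x)|^2\le C\dist_g(x,Z)$, together with the volume bounds \eqref{e7}, as in \eqref{e6}. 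With Step 2 deleted and this decay supplied, your argument goes through.
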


\begin{proof}
We first write down the proof in the case that $\psi$ 
is nowhere zero and consider the case of non-empty 
zero set afterwards. 
If $\psi$ is nowhere zero 
then $g':= |\psi|^{\frac{4}{n-1}} g$ 
is a Riemannian metric on $M\setminus\{p\}$. 
As explained above 
there exists an isomorphism of vector bundles 
\begin{displaymath}
\beta_{g,g'}:\quad\Sigma^g(M\setminus\{p\})
\to\Sigma^{g'}(M\setminus\{p\})
\end{displaymath}
which is a fiberwise isometry. 
Furthermore 
with $\psi':= |\psi|^{-1}\beta_{g,g'}\psi$ 
we have $D_{g'} \psi' = 0$ by (\ref{dirac_conform}). 
Let $\ep>0$ be small. In what follows, the set 
$B(p,\ep)$ is the ball of center $p$ and radius $\ep$ 
for the metric $g$. 
By (\ref{schroed_lichn}) we have 
\begin{equation} 
\label{e1} 
0   = \int_{M \setminus B(p,\ep)} 
(D^2_{g'} \psi',\psi') \, dv^{g'} 
= \int_{M \setminus B(p,\ep)} 
\Big(( \nabla^* \nabla \psi', \psi') 
+ \frac{1}{4} \scal_{g'} |\psi'|^2 \Big)dv^{g'} . 
\end{equation} 
Note that $|\psi'| \equiv 1$. 
Hence, integrating by parts:
\begin{align} 
\nonumber 
\int_{M \setminus B(p,\ep)} 
( \nabla^* \nabla \psi', \psi') dv^{g'} & = 
\int_{M \setminus B(p,\ep)} | \nabla \psi' |^2 dv^{g'} 
+\int_{\partial  B(p,\ep)} 
(\nabla_{\nu}  \psi', \psi') ds^{g'} \\
\nonumber 
&  =   
\int_{M \setminus B(p,\ep)} | \nabla \psi' |^2 dv^{g'} 
+ \frac{1}{2} \int_{\partial  B(p,\ep)} 
\partial_{\nu}  | \psi'|^2 ds^{g'} \\
\label{e2} 
& =   \int_{M \setminus B(p,\ep)} | \nabla \psi' |^2 dv^{g'}. 
\end{align}
where $\nu$ is the outer unit normal vector field on $B(p,\ep)$ and $ds^{g'}$ is the volume element induced by $g'$ on $\partial B(p,\ep)$. 
By Equation (\ref{scalgg'}), we also have 
\begin{displaymath}
\scal_{g'} = \frac{4(n-1)}{n-2}
|\psi|^{-\frac{n+2}{n-1}}
L_g ( |\psi|^{\frac{n-2}{n-1}}).
\end{displaymath}
Since $dv^{g'} = |\psi|^{ \frac{2n}{n-1}}dv^g$, we obtain that 
\begin{displaymath}
\int_{M \setminus B(p,\ep)}\scal_{g'} |\psi'|^2 dv^{g'} 
= \frac{4(n-1)}{n-2}
\int_{ M \setminus B(p,\ep)}  
|\psi|^{\frac{n-2}{n-1}}
L_g ( |\psi|^{\frac{n-2}{n-1}}) dv^g.
\end{displaymath}
Taking the limit as $\ep$ tends to $0$, we obtain
\begin{align*}
\lim_{\ep \to 0} 
\int_{M \setminus B(p,\ep)}
\scal_{g'} |\psi'|^2 dv^{g'} &= 
\frac{4(n-1)}{n-2} 
\int_{ M \setminus \{ p \} }  
|\psi|^{\frac{n-2}{n-1}}
L_g ( |\psi|^{\frac{n-2}{n-1}}) dv^g \\ &= 
\frac{4(n-1)}{n-2}I_g (u)
\end{align*} 
where $u$ is defined 
in (\ref{u_test_function}). 
Together with (\ref{e1}) and (\ref{e2}) we obtain 
\begin{equation}
\label{I_leq_0}
0 =  \int_{M \setminus \{ p \} } | \nabla \psi' |^2 dv^{g'} + \frac{n-1}{n-2} I_g(u)
\end{equation} 
which implies $I_g(u) \leq 0$. 
Furthermore by (\ref{exp_psi}) we have $u(p)=0$ 
and by Lemma \ref{lemma_Ig2} it follows that 
$J_g(u)=I_g(u)$. 
This finishes the proof if $\psi$ is 
nowhere zero.\\

\noindent If $\psi$ has non-empty zero set $N$, 
then for every $s>0$ we define 
\begin{displaymath}
B_s(N):=\{x\in M|\,\dist_g(x,N)<s\}
\end{displaymath}
and for every $k\in\mN$ we define 
\begin{displaymath}
M_k:=\Big\{x\in M\big|\,
\dist_g(x,N)>\frac{2}{k}\Big\}.
\end{displaymath} 
Then the calculation (\ref{e1}) holds 
with $M_k$ instead of $M$. 
If we do the calculation (\ref{e2}) with $M_k$ 
instead of $M$ then we obtain an extra 
boundary term 
\begin{displaymath}
\int_{\partial  M_k} (\nabla_{\nu}  \psi', \psi') ds^{g'}
\end{displaymath} 
which vanishes since $|\psi'|\equiv 1 $. 
Thus we conclude 
\begin{equation}
\label{e3}
0 =  \int_{M_k\setminus\{p\}} | \nabla \psi' |^2 dv^{g'} + \frac{n-1}{n-2} 
\int_{M_k\setminus\{p\}}|\psi|^{\frac{n-2}{n-1}} L_g(|\psi|^{\frac{n-2}{n-1}})\,dv^g.
\end{equation}
For every $k\in\mN$ we choose a smooth 
function $\chi_k$: $M\to[0,1]$ such that 
$\chi_k(x)= 0$ if $\dist_g(x,N)\leq\frac{1}{k}$, 
$\chi_k(x)= 1$ if $\dist_g(x,N)\geq\frac{2}{k}$ 
and $|\diff\chi_k|_g \leq 2k$ 
and we define $u_k:=\chi_k u$ 
and $A_k:=\{x\in M|\,\frac{1}{k}<\dist_g(x,N)<\frac{2}{k}\}$. 
Then we have 
\begin{align}
\nonumber 
&\int_{M_k\setminus\{p\}}|\psi|^{\frac{n-2}{n-1}} L_g(|\psi|^{\frac{n-2}{n-1}})\,dv^g\\
\label{e4}
=& I_g(u_k)-
\int_{A_k}
\chi_k |\psi|^{\frac{n-2}{n-1}} 
L_g(\chi_k |\psi|^{\frac{n-2}{n-1}})\,dv^g.
\end{align}
Next we define $\nu$ as the outer unit normal vector 
field on $\partial A_k$ and we obtain 
\begin{align}
\nonumber 
&\int_{A_k}
\chi_k |\psi|^{\frac{n-2}{n-1}} 
\Delta_g(\chi_k |\psi|^{\frac{n-2}{n-1}})\,dv^g\\
\nonumber 
&= \int_{A_k}
|\diff (\chi_k |\psi|^{\frac{n-2}{n-1}})|^2\,dv^g
-\int_{\partial A_k}
\chi_k |\psi|^{\frac{n-2}{n-1}}
\partial_{\nu}(\chi_k |\psi|^{\frac{n-2}{n-1}})\,ds^g\\
\label{e5}
&= \int_{A_k}
|\diff (\chi_k |\psi|^{\frac{n-2}{n-1}})|^2\,dv^g
-\int_{\partial B_{2/k}(N)}
|\psi|^{\frac{n-2}{n-1}}
\partial_{\nu}|\psi|^{\frac{n-2}{n-1}}\,ds^g.
\end{align}
In order to estimate the derivatives of 
$|\psi|^{(n-2)/(n-1)}$ near $N$ we note that for all 
$Y\in T(M\setminus\{p\})$ we have
\begin{displaymath}
\partial_Y|\psi|^{\frac{n-2}{n-1}}
=\frac{n-2}{n-1}|\psi|^{-\frac{n}{n-1}}
\mathrm{Re}(\nabla_Y\psi,\psi).
\end{displaymath}
Thus there exists $C_1>0$ such that for all 
$k\in\mN$ large enough, for all 
$x\in\overline{B_{2/k}(N)}$ and for all $Y\in T_xM$ 
with $|Y|=1$ we have the estimate 
\begin{displaymath}
\Big|\partial_Y|\psi|^{\frac{n-2}{n-1}}(x)\Big|
\leq C_1|\psi(x)|^{-\frac{1}{n-1}}.
\end{displaymath}
Since $|\psi|^2$ is a $C^1$-function there exists 
$C_2>0$ such that for all $k\in\mN$ large enough 
and for all $x\in\overline{B_{2/k}(N)}$ 
we have $|\psi(x)|^2\leq C_2\dist_g(x,N)$. 
Thus there exists $C_3>0$ such that for all 
$k\in\mN$ large enough, for all 
$x\in\overline{B_{2/k}(N)}$ and for all $Y\in T_xM$ 
with $|Y|=1$ we have 
\begin{equation}
\label{e6} 
\Big|\partial_Y|\psi|^{\frac{n-2}{n-1}}(x)\Big|
\leq C_3k^{\frac{1}{2(n-1)}}.
\end{equation}
Furthermore since $N$ has Hausdorff dimension at 
most $n-2$ there exists $C_4>0$ such that for all 
$k\in\mN$ large enough we have 
\begin{equation}
\label{e7} 
\vol(A_k)\leq \frac{C_4}{k^2},\quad 
\vol(\partial B_{2/k}(N))\leq\frac{C_4}{k}.
\end{equation}
Using (\ref{e6}), (\ref{e7}) 
and using that $|\diff\chi_k|_g\leq 2k$ 
we obtain from (\ref{e5}) that 
\begin{displaymath}
\int_{A_k}
\chi_k |\psi|^{\frac{n-2}{n-1}} 
L_g(\chi_k |\psi|^{\frac{n-2}{n-1}})\,dv^g\to 0
\end{displaymath}
as $k\to\infty$. 
Therefore we obtain from (\ref{e3}), (\ref{e4}) 
that $\liminf_{k\to\infty} I_g(u_k)\leq 0$. 
Furthermore by (\ref{exp_psi}) we have $u_k(p)=0$ 
for all $k$ and by Lemma \ref{lemma_Ig2} it follows 
that $J_g(u_k)=I_g(u_k)$ for all $k$. 
This finishes the proof in the general case. 
\end{proof}


\begin{proof}[Proof of Theorem \ref{pmt_spin}]
The first statement follows immediately 
from Proposition \ref{proposition_pmt_spin} 
and from Lemma \ref{lemma_mu=-m}. 

Next let $m(M,g)=0$ and let $(u_k)_{k\in\mN}$ be the 
sequence in $C^{\infty}(M)$ constructed in 
the proof of Proposition \ref{proposition_pmt_spin}. 
We have $\liminf_{k\to\infty}J_g(u_k)=0$ 
and therefore there exists a subsequence of 
$(u_k)_{k\in\mN}$ which is a minimizing sequence 
for the functional $J_g$. 
From the proof of Lemma \ref{existu} it follows 
that after passing again to a subsequence the 
sequence $(u_k)_{k\in\mN}$ converges pointwise 
almost everywhere to the minimizer $G-\eta r^{2-n}$ 
of the functional $J_g$. 
Therefore we have 
\begin{displaymath}
|\psi|^{\frac{n-2}{n-1}}=G
\end{displaymath}
almost everywhere on $M\setminus\{p\}$ and since 
both functions are continuous the equality 
holds everywhere on $M\setminus\{p\}$. 
By Proposition \ref{green_function} 
the function $G$ is 
strictly positive on $M\setminus\{p\}$. 
In particular $\psi$ is nowhere zero and 
$|\psi|$ and the metric $g'$ constructed 
in the proof of Proposition \ref{proposition_pmt_spin} 
are independent of the choice of 
$\psi_0\in\Sigma_pM$ satisfying (\ref{psi0norm}). 
For every such spinor $\psi_0\in\Sigma_pM$ 
the spinor $\psi'$ constructed from $\psi_0$ 
as in the proof of 
Proposition \ref{proposition_pmt_spin} 
is a parallel spinor for the metric $g'$ 
by (\ref{I_leq_0}). 
Since the choice of $\psi_0$ 
is arbitrary we obtain a trivialization of 
the spinor bundle $\Sigma^{g'}(M\setminus\{p\})$ 
by parallel spinors. 
As in the proof of Theorem 2.2 
in \cite{ammann.humbert:05} it follows that 
$(M,g)$ is conformally equivalent to 
$(S^n,\sigma^n)$.
\end{proof}

\subsection{Application 2: A mass-to-infinity theorem}

Let $M$ be a closed Riemannian manifold of dimension $n \geq 3$. We consider a sequence $g_k$ of metrics which converges in $C^2(M)$ to a metric $g_\infty$. 
We assume that all the metrics $g_k$, $g_\infty$ are flat on a fixed neighborhood $U$ of $p \in M$. 
Let also $(f_k)_{k\in\mN}$ be a sequence in 
$C^{\infty}(M)$ such that 
for every $k$ we have $f_k\equiv 0$ on $U$ 
and such that for every $k$ all eigenvalues of 
the operator $P_k:=\Delta_{g_k}+f_k$ are positive. 
Furthermore we assume that there exists 
$f_{\infty}\in C^{\infty}(M)$ such that 
$f_k\to f_{\infty}$ in $C^{\infty}(M)$ 
and we write 
$P_{\infty}:=\Delta_{g_\infty}+f_{\infty}$. 
Note that we just prove the result for $C^\infty$ for 
simplicity but these assumptions could easily be 
weakened. 
For every $k\in\mN$ the Green function of the 
operator $P_k$ has an expansion as in 
Proposition \ref{green_function} and we will 
denote the mass of $P_k$ by $m_k$.

\begin{theorem} 
\label{massinfty}
Assume that the first eigenvalue $\la_\infty$ of 
$P_\infty$ is $0$.  
Then we have $m_k\to\infty$ as $k\to\infty$. 
\end{theorem}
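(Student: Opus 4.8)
The plan is to deduce this from the variational characterization of Theorem~\ref{maintheorem}. For each $k$ fix a cut-off function $\eta$ as in Section~\ref{variational}, choosing $\delta>0$ small enough that $B^{g_k}(p,\delta)\subset U$ for all $k$ (possible since $g_k\to g_\infty$ in $C^2$), and set $r_k:=\dist_{g_k}(p,\cdot)$ and $F_\eta^k:=\Delta_{g_k}(\eta r_k^{2-n})$; note that $F_\eta^k$ vanishes near $p$, where $\eta$ is constant and $g_k$ is flat. Theorem~\ref{maintheorem} applied to $P_k$ gives
\begin{displaymath}
-m_k=\inf\{J_{f_k}(u)\mid u\in C^\infty(M)\}=:\mu_k,
\end{displaymath}
where $J_{f_k}$ is the functional of Section~\ref{variational} built from $g_k$, $f_k$ and $\eta$. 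Since $J_{f_k}$ extends continuously to $H^{1,2}(M)$ and smooth functions are dense there, $\mu_k=\inf\{J_{f_k}(u)\mid u\in H^{1,2}(M)\}$, so it suffices to exhibit, for each $k$, a test function whose $J_{f_k}$-value tends to $-\infty$ as $k\to\infty$; this gives $m_k\to+\infty$.

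First I would fix an eigenfunction $\phi_\infty$ of $P_\infty$ for its lowest eigenvalue $\lambda_\infty=0$; since $\lambda_\infty$ is the bottom of the spectrum, $\phi_\infty$ may be chosen strictly positive, and it lies in $H^{1,2}(M)$, hence is an admissible test function. The test functions will be the rescalings $t\,\phi_\infty$, $t\in\mR$. Inserting them into $J_{f_k}$ produces a quadratic in $t$,
\begin{displaymath}
J_{f_k}(t\,\phi_\infty)=a_k+2t\,b_k+t^2c_k,
\end{displaymath}
with $a_k:=\int_{M\setminus\{p\}}\eta r_k^{2-n}F_\eta^k\,dv^{g_k}$, $b_k:=\int_M\phi_\infty F_\eta^k\,dv^{g_k}$ and $c_k:=\int_M\phi_\infty P_k\phi_\infty\,dv^{g_k}$. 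Using the $C^2$-convergence $g_k\to g_\infty$ and the $C^\infty$-convergence $f_k\to f_\infty$ I would check that $a_k$ stays bounded and converges (its integrand is supported in a fixed compact subset of $U\setminus\{p\}$), that $b_k\to b_\infty:=\int_M\phi_\infty F_\eta^\infty\,dv^{g_\infty}$, and that $c_k\to\int_M\phi_\infty P_\infty\phi_\infty\,dv^{g_\infty}=\lambda_\infty\,\|\phi_\infty\|_{L^2(M,g_\infty)}^2=0$. Moreover $c_k>0$ for every $k$, since $P_k$ is positive and $\phi_\infty\not\equiv0$, so completing the square,
\begin{displaymath}
\mu_k\le\inf_{t\in\mR}\big(a_k+2t\,b_k+t^2c_k\big)=a_k-\frac{b_k^2}{c_k}.
\end{displaymath}

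The hard part will be to rule out the degenerate possibility $b_\infty=0$. I would argue by contradiction: if $b_\infty=0$, then $F_\eta^\infty$ is $L^2(M,g_\infty)$-orthogonal to $\ker P_\infty$, which is one-dimensional and spanned by $\phi_\infty$ since $\lambda_\infty=0$ is the bottom of the spectrum of $P_\infty$; by the Fredholm alternative there is then a smooth $u$ with $P_\infty u=-F_\eta^\infty$. Using that $f_\infty\equiv0$ on $\supp\eta$ and running the same computation as in the proof of Proposition~\ref{green_function}(2), the function $G:=\eta r_\infty^{2-n}+u\in L^1(M)$ would be a Green function of $P_\infty$ at $p$, i.e.\ $\int_{M\setminus\{p\}}G\,P_\infty\varphi\,dv^{g_\infty}=\varphi(p)$ for all $\varphi\in C^\infty(M)$. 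Taking $\varphi=\phi_\infty$ and using $P_\infty\phi_\infty=0$ gives $\phi_\infty(p)=0$, contradicting $\phi_\infty>0$. Hence $b_\infty\neq0$, so $b_k^2/c_k\to+\infty$ while $a_k$ stays bounded; therefore $\mu_k\to-\infty$ and $m_k=-\mu_k\to+\infty$.

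I expect the only genuine obstacle to be this non-degeneracy $b_\infty\neq0$; the remaining points — continuity of $a_k$, $b_k$, $c_k$ under $C^2$-convergence of the metrics, and the elementary one-variable optimization — are routine, although one must handle carefully the $k$-dependence of the distance functions $r_k$ and of the isometric charts near $p$ that enter $F_\eta^k$.
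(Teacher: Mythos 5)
Your proposal is correct and follows the paper's overall strategy: the variational characterization of Theorem \ref{maintheorem} together with multiples $t\phi_\infty$ of the positive first eigenfunction as test functions, exploiting that the quadratic coefficient $c_k=\int_M\phi_\infty P_k\phi_\infty\,dv^{g_k}$ is positive and tends to $0$. The one step you treat differently is the non-degeneracy of the linear coefficient $b_k$, which you single out as the hard part and settle by contradiction (simplicity of $\ker P_\infty$, Fredholm alternative, construction of a Green function for the non-invertible operator $P_\infty$, pairing with $\phi_\infty$); this argument is valid. The paper instead computes the limit of $b_k$ directly: by the same integration by parts as in Lemma \ref{lemma_Ig2}, $\int_M\phi_\infty F_{\eta_k}\,dv^{g_k}=\int_{M\setminus\{p\}}\eta_k r_k^{2-n}\Delta_{g_k}\phi_\infty\,dv^{g_k}-\phi_\infty(p)$, and since $\Delta_{g_\infty}\phi_\infty=P_\infty\phi_\infty=0$ on $U\supset\supp(\eta_k)$ (because $f_\infty\equiv0$ there) while $g_k\to g_\infty$ in $C^2$, the first integral tends to $0$, so $b_k\to-\phi_\infty(p)<0$. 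This one-line computation simultaneously proves $b_\infty\neq 0$ and identifies its value, with no appeal to the simplicity of the kernel or to elliptic solvability theory; note that it is essentially the same integration by parts you would need anyway to verify that your candidate $G=\eta r_\infty^{2-n}+u$ is a Green function, so your detour can be short-circuited. After that point the two arguments are interchangeable: your optimization $\mu_k\le a_k-b_k^2/c_k$ plays the role of the paper's explicit scaling $t=a_k^{-1/3}$, and the convergence of $a_k$, $b_k$, $c_k$ under $C^2$-convergence of the metrics (with the $k$-dependence of $r_k$ confined to a fixed compact annulus where everything is smooth) is as routine as you indicate.
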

This is a generalization of a result by Beig and 
O'Murchadha \cite{beig.omurchada:97} who proved it 
with $f_k=\frac{n-2}{4(n-1)} \scal_{g_k}$,  i.e. $P_k$ 
is the Yamabe operator of the metric $g_k$. 
The limiting metric $g_\infty$ was assumed to have a 
vanishing Yamabe constant (i.e. the first eigenvalue 
of $P_\infty= L_{g_\infty}$ is equal to $0$). 
With the use of Theorem \ref{maintheorem}, the proof 
is much simpler than the proof by Beig and 
O'Murchadha.  
\begin{proof}
Let $k\in\mN$. 
We choose $\delta_k>0$ such that 
the ball $B(p,\delta_k)$ centered at $p$ and 
of radius $\delta_k$ with respect to the metric 
$g_k$ is contained in $U$. 
Then we define a smooth non-negative function 
$\eta_k$ on $M$ such that 
$\eta_k\equiv\frac{1}{(n-2)\omega_{n-1}}$ on 
$B(p,\delta_k)$ and such that 
$\supp(\eta_k)\subset U$. 
For $x\in M$ let $r_k(x)$ denote the distance of $x$ 
to $p$ with respect to the metric $g_k$. 
The function $F_{\eta_k}$: $M\to\mR$ defined by 
\begin{displaymath}
F_{\eta_k}(x)=
\left\{\begin{array}{ll}
\Delta_{g_k}(\eta_k r_k^{2-n})(x),&x\neq p\\
0, & x=p
\end{array}\right.
\end{displaymath}
is smooth on $M$. 
For every $u\in C^{\infty}(M)$ we define 
\begin{displaymath}
J_k(u):=
\int_{M\setminus\{p\}}
\eta_k r_k^{2-n}F_{\eta_k}\,dv^{g_k}
+2\int_{M}
uF_{\eta_k}\,dv^{g_k}
+\int_{M}uP_ku\,dv^{g_k}.
\end{displaymath}
Then by Theorem \ref{maintheorem} 
we have for all $k\in\mN$ 
\begin{displaymath}
-m_k=\inf\{J_k(u)|\,u\in C^{\infty}(M)\}.
\end{displaymath}
Let $u \in C^{\infty}(M)$ be an eigenfunction 
associated to $\la_\infty$. 
It is a classical result that the eigenfunctions 
corresponding to the first eigenvalue of an 
operator of the form $P_f$ are either strictly 
positive or strictly negative. 
Thus we may assume that $u$ is strictly positive. 
As in the proof of Lemma \ref{lemma_Ig2} 
one obtains for every $k$: 
\begin{displaymath}
\int_{M}uF_{\eta_k}\,dv^{g_k}
=\int_{M\setminus\{p\}}u
\Delta_{g_k}(\eta_k r_k^{2-n})\,dv^{g_k}
=\int_{M\setminus\{p\}}
\eta_k r_k^{2-n}\Delta_{g_k} u\,dv^{g_k}
-u(p).
\end{displaymath}
Since $g_k\to g_{\infty}$ in $C^2(M)$ we have 
$\Delta_{g_k}u\to\Delta_{g_{\infty}}u$ in $C^0(M)$. 
Since $P_{\infty}u\equiv 0$ and 
since $f_{\infty}\equiv 0$ 
on $U$ it follows that 
$\Delta_{g_\infty} u  \equiv 0$ on $U$. 
Since $\supp(\eta_k)\subset U$ we conclude that 
\begin{displaymath}
\int_{M}uF_{\eta_k}\,dv^{g_k}\to -u(p)<0
\end{displaymath}
as $k\to\infty$. 
Since $P_ku\to P_{\infty}u=0$ in $C^0(M)$ 
it follows that 
\begin{displaymath}
a_k:=\int_{M}uP_ku\,dv^{g_k}\to0
\end{displaymath}
as $k\to\infty$. 
Now for every $k\in\mN$ we have 
\begin{displaymath}
-m_k\leq J_k(a_k^{-1/3}u)
=\int_{M\setminus\{p\}}
\eta_k r_k^{2-n}F_{\eta_k}\,dv^{g_k}
+2a_k^{-1/3}\int_{M} uF_{\eta_k}\,dv^{g_k}
+a_k^{-2/3}a_k
\end{displaymath}
and the right hand side tends to $-\infty$ 
as $k\to\infty$. 
The assertion follows. 
\end{proof}

\subsection{Application 3: Real analytic families of masses and negative mass}
In this section, we study the family of masses  associated to a family of operators of the type $\Delta_g + f$. As an application, we prove that on any manifold, there exists a function $f$ such that the operator $\Delta_g + f$ is positive but with negative mass. This shows in particular that a proof of a positive mass theorem as studied in Section \ref{section_pmt_spin} must use the conformal properties of the operator $L_g$.  
Let $(M,g)$ be a closed Riemannian manifold such that 
$g$ is flat on an open neighborhood $U$ of a 
point $p\in M$. 
Let $\varphi,f\in C^{\infty}(M)$ 
such that $f\equiv 0$ and $\varphi\equiv 0$ 
on $U$. 
For every $a\in\mR$ we define the operator 
$P_a:=\Delta_g+f+a\varphi$. 
We assume that for $a=0$ 
all eigenvalues of $P_0$ are positive. 
Since the operator $P_0$ is invertible, it follows 
from the Neumann series expansion of the inverse 
that there exists an open interval $I$ containing 
$0$ such that for every $a\in I$ the operator 
$P_a$ is invertible 
(see e.\,g.\,\cite[IV-1.16]{kato:95}).
Since by a theorem of Rellich the eigenvalues 
of $P_a$ are real analytic functions of $a$ 
(see \cite[VII-3.9]{kato:95}), it follows that 
for every $a\in I$ the operator $P_a$ has only 
positive eigenvalues. 
Moreover we can choose $I$ as the maximal interval 
with this property. 
For every $a\in I$ we can define the mass of $P_a$ 
and we denote it by $m(a)$. 
Furthermore, for every $a\in\mR$ and for every 
$u\in C^{\infty}(M)$ we define 
\begin{align*}
I_a(u)&:=
\int_{M\setminus\{p\}}
(\eta r^{2-n}+u)P_a(\eta r^{2-n}+u)\,dv^g,\\
J_a(u)&:=
\int_{M\setminus\{p\}}
\eta r^{2-n}F_{\eta}\,dv^g
+2\int_{M}
uF_{\eta}\,dv^g
+\int_{M}uP_au\,dv^g,
\end{align*}
where $\eta$ and $F_{\eta}$ are as in Section 
\ref{variational}.  
By Theorem \ref{maintheorem}, we have 
\begin{align*}
-m(a) &= 
\inf\{I_a(u)|\,u \in C^{\infty}(M),\, u(p)= 0\}\\
&=\inf\{J_a(u)|\, u \in C^{\infty}(M)\}.
\end{align*} 
The main result of this section is the 
following theorem. 

\begin{theorem} 
\label{massfunction_theorem}
\begin{enumerate}[1.]
\item The function $I\to\mR$, $a\mapsto m(a)$ 
is real analytic.
\item  The function $I\to\mR$, $a\mapsto m(a)$ 
is convex. 
\item Assume that there exists a point $q\in M$ 
such that $\varphi(q)<0$. 
Then there exists $a_{\infty}>0$ such that $m(a)$ 
can be defined for all $a\in[0,a_{\infty})$ 
and we have $m(a)\to\infty$ as $a\to a_{\infty}$.
\item If $\varphi\geq0$, then $m(a)$ can be defined  
for all $a\geq0$, the function $a\mapsto m(a)$  is 
non-increasing and we have 
\begin{align*}
\lim_{a\to\infty}m(a)& = 
-\inf\{J_0(u)|\,u\in C^{\infty}(M),\, 
\supp(u)\subset M\setminus\supp(\varphi)
\} \\
& = 
- \inf \{ I_0(u)|\,u\in C^{\infty}(M),\, 
u(p)=0,\, 
\supp(u)\subset M\setminus\supp(\varphi)
\} \\
& =: m_{f, M\setminus \supp(\varphi) } 
> - \infty.
\end{align*}
\end{enumerate}
\end{theorem}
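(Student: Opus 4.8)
The plan is to prove the four statements using the variational characterization $-m(a)=\inf\{J_a(u):u\in C^\infty(M)\}$ from Theorem \ref{maintheorem}, together with the fact (from Lemma \ref{existu}) that for each $a\in I$ the infimum is attained at the unique solution $u_a$ of $P_a u_a=-F_\eta$. The key observation tying everything together is that $J_a(u)$ is an \emph{affine} function of $a$ for each fixed $u$: indeed $J_a(u)=J_0(u)+a\int_M \varphi u^2\,dv^g$, since $P_a=P_0+a\varphi$ enters $J_a$ only through the term $\int_M u P_a u\,dv^g$. Hence $a\mapsto -m(a)$ is an infimum of a family of affine functions of $a$, so it is concave, which gives convexity of $m$ in statement 2 immediately (no attainment needed). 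For monotonicity in statements 3 and 4: if $\varphi\ge 0$, then for $a'\ge a$ and any $u$, $J_{a'}(u)\ge J_a(u)$, so $-m(a')\ge -m(a)$, i.e.\ $m$ is non-increasing; conversely, if $\varphi(q)<0$ somewhere, one can build test functions $u$ supported near $q$ with $\int_M\varphi u^2<0$, forcing $-m(a)\to-\infty$, hence $m(a)\to\infty$, as $a$ approaches the right endpoint $a_\infty$ of the maximal interval $I$ (which is finite in this case, as I would check via the first eigenvalue of $P_a$ decreasing in $a$).

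For statement 1 (real analyticity), I would express $m(a)$ directly through the minimizer. From Lemma \ref{lemma_mu=-m} and its proof, $m(a)=u_a(p)$ where $u_a=-P_a^{-1}F_\eta$. The map $a\mapsto P_a^{-1}$ is real analytic on $I$ as an operator-valued function (Neumann series / standard analytic perturbation theory, as already invoked in the text via \cite{kato:95}), $F_\eta$ is a fixed smooth function independent of $a$ (note $F_\eta$ depends only on $g$ and $\eta$, which are fixed), and evaluation at $p$ is a bounded linear functional on, say, $C^0(M)$ once we know $u_a$ is smooth with appropriate continuous dependence. So $a\mapsto u_a(p)=m(a)$ is a composition of real-analytic maps, hence real analytic. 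I would need to be a little careful to phrase the analyticity of $a\mapsto u_a$ in a function space where point evaluation makes sense — working in $C^{2,\alpha}(M)$ via elliptic estimates for $P_a^{-1}$ acting on $C^{0,\alpha}(M)$, with coefficients depending analytically (in fact affinely) on $a$, should suffice.

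For the limit formula in statement 4, suppose $\varphi\ge 0$. I would prove the two claimed equalities. The second equality — that the infimum of $J_0$ over $u$ with $\supp(u)\subset M\setminus\supp(\varphi)$ equals the infimum of $I_0$ over such $u$ with additionally $u(p)=0$ — is essentially the content of Lemma \ref{lemma_Ig2} combined with the cut-off trick of Lemma \ref{lemma_mu=nu}: on the support class in question $J_0=I_0+u(p)$, and since $\varphi$ (hence $\supp\varphi$) vanishes on the flat neighborhood $U$ of $p$, one can arrange $u(p)=0$ by the same cutting-off near $p$ as in Lemma \ref{lemma_mu=nu} without leaving the class $\supp(u)\subset M\setminus\supp(\varphi)$. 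That this common value is $>-\infty$ follows by the same coercivity argument as in Lemma \ref{existu}, since $P_0$ restricted to functions supported in $M\setminus\supp(\varphi)$ (where $P_0=P_a$ for all $a$) is still bounded below in the relevant sense. The main content is the limit itself. For the upper bound $\limsup_{a\to\infty}m(a)\le m_{f,M\setminus\supp(\varphi)}$: any $u$ with $\supp(u)\subset M\setminus\supp(\varphi)$ satisfies $J_a(u)=J_0(u)$ for all $a$, so $-m(a)\le J_0(u)$, giving $\liminf_{a\to\infty}(-m(a))\le \inf J_0(u)$ over the class, hence $\limsup m(a)\ge -$ wait, I mean this gives the inequality $m(a)\ge -\inf\{J_0(u):\dots\}$ is false; carefully: $-m(a)\le J_0(u)$ gives $-m(a)\le m_{f,M\setminus\supp(\varphi)}$... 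I need to recompute signs, but the upshot is one inequality is immediate from using admissible test functions. The reverse inequality is the hard part: I would take $u_a$ the minimizer for $J_a$, show $(u_a)$ stays bounded in $H^{1,2}(M)$ (using that $m(a)$ is monotone and bounded below by the easy inequality, so $J_a(u_a)=-m(a)$ is bounded), extract a weak limit $u_\infty$, and show $\int_M\varphi u_\infty^2=0$ — because otherwise $J_a(u_a)\ge J_0(u_a)+a\int\varphi u_a^2\to+\infty$, contradicting boundedness of $-m(a)$. Since $\varphi\ge 0$ and (generically) $\varphi>0$ on the interior of its support, $\int_M\varphi u_\infty^2=0$ forces $u_\infty=0$ a.e.\ on $\operatorname{int}\supp(\varphi)$; a density/cutoff argument then replaces $u_\infty$ by an admissible competitor supported in $M\setminus\supp(\varphi)$ with $J_0$-value $\le\liminf J_0(u_a)\le\liminf(-m(a))+o(1)$, yielding the matching bound. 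The main obstacle I anticipate is this last step: controlling the minimizers $u_a$ as $a\to\infty$ near $\partial\supp(\varphi)$ and producing an admissible limiting test function without losing energy — the subtlety being that $\varphi$ may vanish only to finite order on $\partial\supp(\varphi)$, so $\int\varphi u_a^2\to0$ does not instantly give $u_a\to 0$ in a strong enough sense there; a careful cut-off (as in Lemma \ref{lemma_mu=nu}) with a volume estimate on a thin neighborhood of $\supp(\varphi)$ should close the gap.
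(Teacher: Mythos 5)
Your treatments of points 1, 2 and 4 are essentially sound. Point 1 is the paper's own argument (real-analytic dependence of $(P_0+a\varphi)^{-1}$ on $a$ via \cite{kato:95}, applied to a fixed smooth right-hand side, plus evaluation at $p$); point 2 via ``$-m(a)=\inf_u J_a(u)$ is an infimum of affine functions of $a$, hence concave'' is a genuinely simpler alternative to the paper's computation $m''(a)=2\int_{M\setminus\{p\}}G_a'P_aG_a'\,dv^g\geq 0$; and point 4 follows the same two-sided scheme as the paper (admissible test functions supported in $M\setminus\supp(\varphi)$ for one inequality, weak $H^{1,2}$ limits of the minimizers $u_a$ for the other), with your direct coercivity bound on $J_a(u_a)$ replacing the paper's comparison $0<G_a<G_0$ and the estimate $a\int_M\varphi G_a\,dv^g\leq C$; the boundary/approximation subtlety you flag at the end is handled only loosely in the paper as well, so I do not count it against you.

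The genuine gap is point 3. Your proposed mechanism --- a test function $u$ supported near $q$ with $\int_M\varphi u^2\,dv^g<0$ ``forcing $-m(a)\to-\infty$'' --- cannot work, because $a$ tends to the \emph{finite} value $a_\infty$, so for any fixed $u$ the quantity $J_a(u)=J_0(u)+a\int_M\varphi u^2\,dv^g$ stays bounded; worse, for every $a<a_\infty$ the operator $P_a$ is positive, so $\int_M uP_au\,dv^g\geq\lambda_a\|u\|_{L^2}^2>0$ for all $u$, and no such fixed test function makes $J_a$ very negative. What your test function actually proves is only that $\lambda_a<0$ for $a$ large, hence that $a_\infty<\infty$. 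The divergence $m(a)\to\infty$ as $a\to a_\infty$ needs a different idea, which is exactly what the paper invokes at this point, namely the mass-to-infinity Theorem \ref{massinfty}: take the positive first eigenfunction $u$ of $P_{a_\infty}$ (eigenvalue $0$), observe that $\Delta_g u=0$ on $U$ so that $\int_M uF_\eta\,dv^g=-u(p)<0$, while $\int_M uP_au\,dv^g=(a-a_\infty)\int_M\varphi u^2\,dv^g\to 0$ as $a\to a_\infty$, and then rescale $u$ by a factor tending to infinity (the paper uses the exponent $-1/3$ of the vanishing quadratic term) to drive $J_a$ of the rescaled function to $-\infty$. Your sketch contains neither this scaling argument with the limiting eigenfunction nor a citation of Theorem \ref{massinfty}, so point 3 as written is not proved.
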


\noindent
\begin{corollary}
\label{massfunctioncor}
There exists a function $f$ such that $P_f$ is 
positive and such that $m_f <0$.
\end{corollary}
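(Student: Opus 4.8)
The plan is to apply part 4 of Theorem \ref{massfunction_theorem} to a well-chosen pair $(f,\varphi)$ so that the limit $m_{f,M\setminus\supp(\varphi)}$ is strictly negative; then for $a$ large enough the operator $P_a=\Delta_g+f+a\varphi$ is positive and has $m(a)<0$, and renaming $f+a\varphi$ as the desired function finishes the argument. So the whole task reduces to producing $f,\varphi\in C^\infty(M)$ with $f\equiv 0$ and $\varphi\equiv 0$ near $p$, with $\varphi\geq 0$, with $P_0=\Delta_g+f$ positive, and with
\begin{displaymath}
m_{f,M\setminus\supp(\varphi)}
=-\inf\{I_0(u)\,|\,u\in C^\infty(M),\,u(p)=0,\,\supp(u)\subset M\setminus\supp(\varphi)\}<0,
\end{displaymath}
i.e.\ with $\inf\{I_0(u)\,|\,\dots\}>0$ over those $u$ supported away from $\supp(\varphi)$.

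First I would fix a small ball $B(p,2\delta)\subset U$ and choose $\varphi\geq 0$ smooth with $\varphi\equiv 0$ on $B(p,\delta)$ and $\varphi>0$ on $M\setminus B(p,2\delta)$; then $M\setminus\supp(\varphi)\subset B(p,2\delta)$, so the competitors $u$ in the infimum above are supported in the small ball $B(p,2\delta)$, where $g$ is flat and $f\equiv 0$. For such $u$, writing $w=\eta r^{2-n}+u$ and using $f\equiv 0$ on the support, one has $I_0(u)=\int_{M\setminus\{p\}}w\,\Delta_g w\,dv^g=\int|\diff w|^2\,dv^g$ by the same integration by parts as in Lemma \ref{lemma_Ig2} (with the Dirac-mass boundary term accounting for the difference), so in fact $I_0(u)=\|\diff w\|_{L^2}^2-u(p)=\|\diff w\|_{L^2}^2$. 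This quantity is manifestly $\geq 0$; the point is that it is uniformly bounded below by a positive constant, because $w$ equals $\frac{1}{(n-2)\omega_{n-1}}r^{2-n}$ near $p$ and so $\diff w$ cannot be made small in $L^2$ near $p$ — concretely $\int_{B(p,\delta)}|\diff(\eta r^{2-n})|^2\,dv^g$ is a fixed positive number independent of $u$. Hence $\inf\{I_0(u)\}>0$ for any choice of $f$ with $\Delta_g+f$ positive, and therefore $m_{f,M\setminus\supp(\varphi)}<0$.

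With this in hand I would invoke Theorem \ref{massfunction_theorem}(4): since $\varphi\geq 0$, $m(a)$ is defined for all $a\geq 0$ and $m(a)\to m_{f,M\setminus\supp(\varphi)}<0$ as $a\to\infty$. Fix $a_0$ with $m(a_0)<0$; then $P_{f+a_0\varphi}=\Delta_g+(f+a_0\varphi)$ is a positive operator (part of the hypotheses of Theorem \ref{massfunction_theorem}(4) guarantees $P_a$ is positive for all $a\geq 0$) whose mass at $p$ is $m(a_0)<0$. Taking the new function to be $f+a_0\varphi$ — which vanishes on a neighborhood of $p$ since both $f$ and $\varphi$ do — gives the claimed example. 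The main obstacle is the uniform positive lower bound on $\inf\{I_0(u)\}$, but as indicated this is a soft estimate: the competitor functions $w$ all agree near $p$ with a fixed singular profile, so their Dirichlet energy on a fixed small ball around $p$ is bounded below by a constant that depends only on $\eta$, $\delta$, and $n$, not on $u$. (One should also take $f=0$ outright if one likes, provided $\Delta_g$ is positive as an operator, i.e.\ after shifting; but any valid $f$ works since the argument does not use $f$ except through positivity of $P_0$.)
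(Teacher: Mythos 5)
Your overall reduction — choose $\varphi\geq 0$ vanishing only near $p$, invoke Point 4 of Theorem \ref{massfunction_theorem}, and reduce everything to showing that $\inf\{I_0(u)\,|\,u(p)=0,\ \supp(u)\subset M\setminus\supp(\varphi)\}>0$ — is exactly the paper's strategy, and that infimum is indeed strictly positive. But your proof of this last, crucial point is not valid. The identity $I_0(u)=\int_M|\diff w|^2\,dv^g$ with $w=\eta r^{2-n}+u$ is false: for $n\geq 3$ one has $|\diff(r^{2-n})|^2=(n-2)^2r^{2-2n}$ and $\int_0^\delta r^{2-2n}r^{n-1}\,dr=\infty$, so $\|\diff w\|_{L^2}^2=+\infty$ while $I_0(u)$ is finite (it equals $J_0(u)$ by Lemma \ref{lemma_Ig2}). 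In the integration by parts on $M\setminus B(p,\varepsilon)$ the boundary term $\int_{\partial B(p,\varepsilon)}w\,\partial_r w\,ds^g$ does not vanish; it diverges like $-\tfrac{1}{(n-2)\omega_{n-1}}\varepsilon^{2-n}$ and cancels the divergence of the Dirichlet energy, and the finite renormalized remainder is precisely where the mass hides — compare Section \ref{proof'}, where the correct finite-energy identity (\ref{I_g_special_case}) is obtained only after a conformal blow-up and carries the extra boundary contribution $-m(M,g)$. Consequently your "fixed positive number $\int_{B(p,\delta)}|\diff(\eta r^{2-n})|^2\,dv^g$" is actually $+\infty$, and the asserted uniform lower bound is vacuous; a soft energy estimate of this kind cannot detect the sign of a mass. (A small additional slip: one cannot "take $f=0$ outright", since $\Delta_g$ on a closed manifold kills constants and so $P_0$ is not positive; $f$ must be, e.g., nonnegative, not identically zero, and vanishing near $p$.)

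The missing input has to come from somewhere, and the paper supplies it by transplanting the local picture to the sphere: with $h$ conformal to $\sigma^n$ and flat on $B(q,\delta)$ one knows $\overline{m}(0)=0$, the comparison argument after (\ref{g0ga}) gives $G_a<G_0$ and hence $\overline{m}(a)=(G_a-G_0)(q)<0$ for $a>0$, monotonicity from Point 4 gives $\lim_{a\to\infty}\overline{m}(a)<0$, and Point 4 applied once more converts this into the positivity statement (\ref{limonsphere}) for competitors supported in $B(q,\delta)$; since $(B(q,\delta),h)$ and $(B(p,\delta),g)$ are isometric, the same positivity holds on $M$ and Point 4 on $M$ yields $m(a)<0$ for large $a$. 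If you want to avoid the sphere, a correct direct patch would be to identify your localized infimum with minus the mass of the Dirichlet Green function of the flat Laplacian on the small ball (this is the identification indicated after Proposition \ref{massom}, proved by mimicking Theorem \ref{maintheorem}) and then compute that Green function explicitly, $\tfrac{1}{(n-2)\omega_{n-1}}(r^{2-n}-\rho^{2-n})$, whose constant term is $-\tfrac{\rho^{2-n}}{(n-2)\omega_{n-1}}<0$. Either way, a genuine argument identifying the value (not just the sign of an energy) is needed at the step where your proposal currently breaks down.
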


\begin{corollary}
\label{massfunctioncor2} 
Let $p\in S^n$. 
There exists a Riemannian metric $g$ on $S^n$ 
which is conformal to $\sigma^n$ and 
flat on an open neighborhood of $p$ 
such that for the operator $P_a:=\Delta_g+a\scal_g$ 
we have  $m(a)<0$ for all $a>\frac{n-2}{4(n-1)}$. 
\end{corollary}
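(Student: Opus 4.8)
The strategy is to produce the metric $g$ explicitly and then view $P_a$, for $a>a_0:=\frac{n-2}{4(n-1)}$, as a positive perturbation of the Yamabe operator $L_g=P_{a_0}$. \emph{Step 1 (choice of $g$).} Let $q$ be the antipode of $p$ and use stereographic projection to identify $S^n\setminus\{q\}$ with $\mR^n$ so that $p$ corresponds to $0$ and $\sigma^n=\ph^{4/(n-2)}\xi^n$ with $\ph(x)=\bigl(\tfrac{2}{1+|x|^2}\bigr)^{(n-2)/2}$. I would choose a smooth radial function $w$ on $\mR^n$ equal to a positive constant on $B(0,r_1)$, equal to $\ph$ outside $B(0,r_2)$ for some $0<r_1<r_2$, and superharmonic on all of $\mR^n$. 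The first two properties make $w/\ph$ extend to a smooth positive function on $S^n$, so $g:=(w/\ph)^{4/(n-2)}\sigma^n=w^{4/(n-2)}\xi^n$ is a smooth metric on $S^n$ conformal to $\sigma^n$ which is flat on the ball around $p$ corresponding to $B(0,r_1)$. By (\ref{scalgg'}) applied to the flat metric, $\scal_g=\frac{4(n-1)}{n-2}w^{-(n+2)/(n-2)}\Delta_{\xi^n}w$, so superharmonicity of $w$ is exactly $\scal_g\ge 0$, and choosing $w$ strictly superharmonic somewhere on the annulus $r_1<|x|<r_2$ forces $\scal_g\not\equiv 0$. The one point needing an argument is the existence of such $w$: in the radial variable the condition is that $r^{n-1}w'$ be smooth and non-increasing, equal to $0$ where $w$ is constant and equal to $r^{n-1}\ph'(r)$ (which is negative and decreasing) where $w=\ph$; the matching value of the interior constant is then determined, and one checks that it keeps $w>0$ throughout.

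\emph{Step 2 (base case and perturbation formula).} Since $g$ is conformal to $\sigma^n$ we have $Y(S^n,g)=Y(S^n,\sigma^n)>0$, so $L_g=P_{a_0}$ is positive; and since $(S^n,g)$ is conformally equivalent to $(S^n,\sigma^n)$, Theorem \ref{pmt_spin} gives $m(a_0)=m(S^n,g)=0$, so by Proposition \ref{green_function} the Green function $G$ of $L_g$ at $p$ satisfies $G(x)=\frac{1}{(n-2)\omega_{n-1}r^{n-2}}+o(1)$ as $x\to p$. Now fix $a>a_0$ and set $b:=a-a_0>0$, so $P_a=L_g+b\,\scal_g$; as $\scal_g\ge 0$, the quadratic form of $P_a$ dominates that of $L_g$, hence $P_a$ is a positive operator and $G_a$, $m(a)$ are well defined. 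Because $\scal_g$ vanishes near $p$, the function $\scal_g\,G$ extends smoothly to $S^n$, so $h_a:=-b\,P_a^{-1}(\scal_g\,G)\in C^\infty(S^n)$ is well defined. One checks that $G+h_a$ is the Green function of $P_a$ at $p$: it lies in $L^1(S^n)\cap C^\infty(S^n\setminus\{p\})$, and for $\varphi\in C^\infty(S^n)$, using $\int_{S^n\setminus\{p\}}G\,L_g\varphi\,dv^g=\varphi(p)$ and self-adjointness of $P_a$, one gets $\int_{S^n\setminus\{p\}}(G+h_a)P_a\varphi\,dv^g=\varphi(p)+b\int_{S^n}\scal_g G\varphi\,dv^g-b\int_{S^n}\scal_g G\varphi\,dv^g=\varphi(p)$. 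By the uniqueness statement in Proposition \ref{green_function}, $G_a=G+h_a$; comparing expansions and using that $h_a$ is smooth gives $m(a)=h_a(p)$.

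\emph{Step 3 (sign and conclusion).} Put $\psi:=P_a^{-1}(\scal_g\,G)\in C^\infty(S^n)$. Then $\psi(p)=\int_{S^n\setminus\{p\}}G_a\,P_a\psi\,dv^g=\int_{S^n}\scal_g\,G_a\,G\,dv^g$. By Proposition \ref{green_function} both $G_a$ and $G$ are strictly positive on $S^n\setminus\{p\}$, and $\scal_g\ge 0$ with $\scal_g\not\equiv 0$; hence the integral is strictly positive. Therefore $m(a)=h_a(p)=-b\,\psi(p)<0$, and since $a>a_0$ was arbitrary this proves the corollary.

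\emph{Remark on the main obstacle, and an alternative.} The delicate step is Step 1, and the requirement $\scal_g\ge 0$ there is essential: if $\scal_g$ were negative somewhere, then by Theorem \ref{massfunction_theorem}(3) the mass $m(a)$ would tend to $+\infty$ at some finite value of $a$, so "$m(a)<0$ for all $a>a_0$" could not hold. One can also avoid the explicit perturbation formula and argue directly from Theorem \ref{massfunction_theorem}: with $\scal_g\ge 0$, parts (1) and (4) (after writing $P_a=(\Delta_g+a_0\scal_g)+(a-a_0)\scal_g$ so that the base operator is $L_g$) show that $a\mapsto m(a)$ is real analytic and non-increasing on $[a_0,\infty)$ with $m(a_0)=0$ and $\lim_{a\to\infty}m(a)=m_{a_0\scal_g,\,S^n\setminus\supp(\scal_g)}$; this limit is strictly negative because the unique minimizer of the variational problem computing $m(a_0)$ (Lemma \ref{existu}) equals $G-\eta r^{2-n}$, which is strictly positive on $\supp(\scal_g)$ and hence inadmissible for the constrained infimum, forcing the latter strictly above $-m(a_0)=0$. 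Real analyticity then propagates $m(a)<0$ to all $a>a_0$.
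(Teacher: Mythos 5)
Your argument is correct and essentially the paper's: your Step 1 is exactly the paper's construction (a radial, non-increasing, superharmonic interpolation of the stereographic conformal factor, yielding $g$ conformal to $\sigma^n$, flat near $p$, with $\scal_g\geq 0$), and your Steps 2--3 simply make explicit the Green-function mechanism that the paper's proof leaves implicit and had already used for Corollary \ref{massfunctioncor}, namely $m(a)=m(a)-m(a_0)=-(a-a_0)\int_{S^n}\scal_g\,G\,G_a\,dv^g<0$ combined with $m(a_0)=0$ and positivity of the Green functions. The only shaky point is the closing remark: inadmissibility of the unconstrained minimizer does not by itself force the constrained infimum in Theorem \ref{massfunction_theorem}(4) to be strictly positive (one would need the compactness argument of Lemma \ref{existu} applied to a constrained minimizing sequence to rule out the infimum being approached without attainment), but since that remark is only an aside, your main proof is unaffected.
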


\subsubsection{Proof of Theorem \ref{massfunction_theorem} Point 1.}
For every $a\in I$ we denote the Green 
function for the operator $P_a$ by $G_a$. 
We have 
\begin{displaymath}
(P_0+a\varphi)G_a=\delta_p,\quad P_0G_0=\delta_p
\end{displaymath}
and therefore 
\begin{equation}
\label{m_a_analytic}
(P_0+a\varphi)(G_a-G_0)=-a\varphi G_0,
\end{equation}
where the right hand side is smooth, since $\varphi$ 
vanishes on an open neighborhood of $p$. 
The family of bounded linear operators 
\begin{displaymath}
I\ni a\mapsto P_0+a\varphi\in B(C^2(M),C^0(M))
\end{displaymath}
is real analytic and for every $a\in I$ 
the operator $P_0+a\varphi$ is invertible. 
It follows that the family of bounded linear 
operators 
\begin{displaymath}
I\ni a\mapsto (P_0+a\varphi)^{-1}\in 
B(C^0(M),C^2(M))
\end{displaymath}
is real analytic as well 
(see \cite[VII-\S 1.1]{kato:95}). 
From (\ref{m_a_analytic}) we obtain that 
the family of smooth functions $a\mapsto G_a-G_0$ 
is real analytic. 
The assertion follows.

\subsubsection{Proof of Theorem \ref{massfunction_theorem} Point 2.} 
Denote by $G_a':= \frac{d}{da} G_a$ and 
$G_a'':= \frac{d^2}{da^2} G_a$. 
Differentiating  twice $P_a G_a = \delta_p$, we get: 
\begin{equation} 
\label{conv1}  
P_a G_a' = - \varphi G_a  \; \hbox{ and } \; 
P_a G_a'' = -2 \varphi G_a'. 
\end{equation} 
Now, observe that $G_a''(p) = m''(a)$. 
As a consequence, since in the sense of distributions 
$P_aG_a = \delta_p$ and using (\ref{conv1}), 
we have 
\begin{align*}
m''(a ) &  = 
\int_{M \setminus \{ p \}} G_a P_a G_a''\, dv^{g}\\
&  = -2 \int_{M \setminus \{ p \} } 
\varphi G_a' G_a dv^g \\
& = 2 \int_{M \setminus \{ p \}} G_a' P_a G_a'\, dv^g 
\geq 0. 
\end{align*}
The last inequality comes from the fact that $G_a'$ is 
smooth on $M$ and that $P_a$ is a positive operator.

\subsubsection{Proof of Theorem \ref{massfunction_theorem} Point 3.}
Denote by $\la_a$ the first eigenvalue of $P_a$. By assumption, $\la_0>0$.
Since $\varphi(q)<0$ there exists an open neighborhood 
$V\subset M$ of $q$ such that $\varphi<0$ on $V$. 
Let $v \not= 0$ be a non-negative function supported 
in $V$. 
Then, for $a$ large enough, $\int_M v P_av dv^{g} <0$ 
and hence $\la_a <0$. 
Define $a_\infty$ as
\begin{displaymath}
a_\infty:= \inf\{ a>0 |\, \la_a = 0\}.
\end{displaymath}
Then, by Theorem \ref{massinfty} we have 
$m(a) \to \infty$ as $a\to a_{\infty}$.

\subsubsection{Proof of Theorem \ref{massfunction_theorem} Point 4.} 

Since $P_0$ is a positive operator and since 
$\varphi\geq 0$ we have for all $a\geq 0$ and 
for all $u\in C^{\infty}(M)$ with $u\not\equiv 0$ 
\begin{displaymath}
\int_{M} uP_au\,dv^g
=\int_{M} uP_0u\,dv^g
+a\int_{M} \varphi u^2\,dv^g
>0.
\end{displaymath}
Thus for all $a\geq 0$ the operator $P_a$ is 
positive and $m(a)$ can be defined. 

For every $a\geq 0$ and for every 
$u\in C^{\infty}(M)$ we have 
\begin{displaymath}
J_a(u)=J_0(u)+a\int_{M} \varphi u^2\,dv^g,
\end{displaymath}
where the integral on the right hand side is 
non-negative. 
Let $a_1,a_2\geq0$ with $a_1\leq a_2$. 
Then for every $u\in C^{\infty}(M)$ we have 
$J_{a_1}(u)\leq J_{a_2}(u)$. 
It follows that $m(a_1)\geq m(a_2)$ 
and thus the function $a\mapsto m(a)$
is non-increasing. 
Next let $u\in C^{\infty}(M)$ such that 
$\supp(u)\subset M\setminus\supp(\varphi)$. 
Then for all $a\geq 0$ we have 
$-m(a)\leq J_a(u)=J_0(u)$. 
Since this holds for every $u\in C^{\infty}(M)$ 
such that 
$\supp(u)\subset M\setminus\supp(\varphi)$, 
we obtain 
\begin{displaymath}
-m(a)\leq\inf\{J_0(u)|\,u\in C^{\infty}(M),\,
\supp(u)\subset M\setminus\supp(\varphi)\}.
\end{displaymath}
Thus the function $a\mapsto m(a)$ is bounded 
from below and the limit $\lim_{a\to\infty}m(a)$ 
exists. \\

\noindent 
In the following we may assume without loss of 
generality that $\varphi\not\equiv 0$. 
We now need to obtain some properties of $G_a$. 
Let us observe that $G_0 -G_a$ is smooth. 
One computes that 
\begin{equation} 
\label{g0ga} 
P_0 (G_0-G_a) = a \varphi G_a.
\end{equation}
Multiplying this equation by the Green function 
of $P_0$ at any point $q\in M\setminus\{p\}$ 
and integrating we obtain $(G_0-G_a)(q)>0$. 
It follows that $0 < G_a < G_0$ on 
$M\setminus\{p\}$. 
Therefore, since 
\begin{displaymath}
1 = \int_{M\setminus\{p\}} 
P_a(1) G_a\,dv^g 
= \int_M f G_a\,dv^g 
+ a \int_M \varphi G_a\,dv^g
\end{displaymath}
we obtain that 
\begin{equation} 
\label{aga}
a \int_M \varphi G_a\,dv^g \leq C
\end{equation}
for some fixed positive constant $C$ 
which is independent of $a$. 
We multiply (\ref{g0ga}) by $G_0 -G_a$ and 
integrate.   
\begin{align*}
a \int_M \varphi G_a G_0\, dv^g  
& \geq a \int_M \varphi G_a G_0\, dv^g 
- a \int_M \varphi G_a^2\,  dv^g \\
&  =  \int_{M} (G_0-G_a) P_0(G_0-G_a)\, dv^g \\
&= \int_{M} |\diff (G_0 -G_a)|^2 dv^g 
+ \int_M f (G_0-G_a)^2 dv^g
\end{align*}
and the right hand side is positive since $P_0$ 
is a positive operator. 
From (\ref{aga}), we deduce that 
$ a \int_M \varphi G_a G_0 dv^g $ is bounded, and 
hence the same holds for 
\begin{displaymath}
\int_{M} |\diff (G_0 -G_a)|^2 dv^g 
+ \int_M f (G_0-G_a)^2 dv^g.
\end{displaymath}
This implies that $G_0 - G_a$ is bounded in the 
Sobolev space $H^{1,2}(M)$. 
Hence, there exists a function 
$v_\infty \in H^{1,2}(M)$ such that after taking a 
subsequence the functions $G_0-G_a$ tend to 
$v_\infty$ weakly in $H^{1,2}(M)$ and strongly in 
$L^2(M)$.  
We now set $u_a:= G_a - \eta r^{2-n}$. 
Then $u_a$ tends to 
$u_\infty:= -v_\infty+ G_0 - \eta r^{2-n}$ weakly 
in $H^{1,2}(M)$ and strongly in $L^2(M)$ and 
pointwise almost everywhere. 
Observe that $u_\infty$ is non-negative 
on $\supp(\varphi)$ since $u_a \equiv G_a$ on 
$\supp(\varphi)$. 
Moreover, by (\ref{aga}) we have 
\begin{displaymath}
\int_M \varphi u_\infty\, dv^g = 
\lim_{a\to\infty} \int_M \varphi G_a\,dv^g = 0
\end{displaymath}
and as a consequence, 
$u_\infty \equiv 0$ on $\supp(\varphi)$. \\

\noindent For all smooth functions $u$ we have 
\begin{displaymath}
J_0(u) = 
\int_M (|\diff u|^2 + f u^2)\,dv^g 
+ \int_{M \setminus \{p \} } 
\eta r^{2-n} F_{\eta}\, dv^g 
+ 2 \int_M u F_{\eta}\, dv^g.
\end{displaymath}
By density of $C^{\infty}(M)$ in $H^{1,2}(M)$ and 
since $u_\infty$ vanishes on $\supp(\varphi)$, 
we thus have 
\begin{align*} 
&\inf\{J_0(u)|\, u\in C^{\infty}(M),\,
\supp(u)\subset M\setminus\supp(\varphi)\}  \\ 
&= \int_M 
(|\diff u_\infty|^2 + f u_\infty^2)\, dv^g 
+ \int_{M \setminus \{p \} } \eta r^{2-n} F_{\eta}\,  dv^g 
+ 2 \int_M u_\infty F_{\eta}\, dv^g.
\end{align*}

\noindent By weak convergence in $H^{1,2}(M)$ and strong convergence in $L^2(M)$
of $u_a$ to $u_\infty$, it follows that the right hand side is bounded above by (see the proof of Lemma \ref{existu} for details)
\begin{align*} 
&\liminf_{a \to \infty}  
\int_M (|\diff u_a|^2 +  f u_a^2)\, dv^g 
+ \int_{M \setminus \{p \} } \eta r^{2-n} F_{\eta}\, dv^g 
+ 2 \int_M u_a F_{\eta}\, dv^g  \\
&= \liminf_ {a \to \infty} J_0(u_a).
\end{align*}

\noindent This implies that 
\begin{equation} 
\label{uinfty}
\inf\{J_0(u)|\, u\in C^{\infty}(M),\,
\supp(u)\subset M\setminus\supp(\varphi)\}  
\leq \liminf_{a \to \infty} J_0(u_a).
\end{equation}

\noindent From Theorem \ref{maintheorem}, 
\begin{displaymath} 
-m(a) = J_a(u_a)= J_0(u_a) 
+ a \int_M \varphi G_a^2\,dv^g \geq J_0(u_a) 
\end{displaymath} 
which gives, together with (\ref{uinfty}) that 
\begin{displaymath}
\inf\{J_0(u)|\, u\in C^{\infty}(M),\,
\supp(u)\subset M\setminus\supp(\varphi)\}  
\leq  - \lim_{a\to\infty} m(a).
\end{displaymath}
This proves Point 4 of Theorem \ref{massfunction_theorem}.

\subsubsection{Proof of Corollary \ref{massfunctioncor}}
Let us for a moment consider the sphere $S^n$. 
Let $h$ be a metric on $S^n$ which is conformal to 
the standard metric and which is flat on a ball 
$B(q,\de)$ of radius $\delta$ for some 
$q\in S^n$ where $\delta>0$ is chosen such that 
$(M,g)$ is flat on $B(p,\delta)$. 
Let $\overline{\varphi}$ be a smooth function on 
$S^n$ which is positive on 
$S^n\setminus \overline{B(q,\de)}$ 
and which vanishes on $B(q,\delta)$. 
For every $a\geq0$ let $G_a$ be the Green function 
of the operator $L_h+a\overline{\varphi}$ 
and let $\overline{m}(a)$ be its mass. 
We have 
\begin{displaymath}
L_h (G_0 -G_a) = a \overline{\varphi} G_a.
\end{displaymath} 
As in the lines after Equation (\ref{g0ga}) 
it follows that 
for all $a>0$ we have 
$G_0 - G_a > 0$ and hence, 
$\overline{m}(a) = \overline{m}(a) - \overline{m}(0) = (G_a - G_0)(q) <0$. 
By Point 4 of Theorem \ref{massfunction_theorem} 
the function $a \mapsto \overline{m}(a)$ is 
non-increasing. 
Hence, $\lim_{a\to \infty } \overline{m}(a) < 0$. 
Applying Point 4 of Theorem 
\ref{massfunction_theorem}, 
we obtain that 
\begin{equation} 
\label{limonsphere}
\inf\{J(u)|\, u\in C^{\infty}(S^n),\, 
\supp(u)\subset B(q,\delta) \}  > 0
\end{equation} 
where $J$ is defined as above on the sphere by 
\begin{displaymath}
J(u):=
\int_{S^n \setminus\{q\}}
\eta r^{2-n}F_{\eta}\,dv^g
+2\int_{S^n}
uF_{\eta}\,dv^g
+\int_{S^n}uP_0u\,dv^g,
\end{displaymath}
where $\eta$ is a smooth function supported in 
$B(q,\delta)$. \\

\noindent Now, let $f$: $M\to\mR$ be a smooth 
function which is positive on 
$M\setminus \overline{B(p, \delta)}$ and $0$ 
on $B(p,\de)$. 
We consider the operator 
$P_a := \Delta_ g + f + a f$. 
Let $m(a)$ be the corresponding mass. 
For every $a\geq 0$ the operator $P_a$ is positive. 
By Point 4 of 
Theorem \ref{massfunctioncor}, we have 
\begin{displaymath}
\lim_{a + \infty} m(a) = 
- \inf\{J_0(u)|\, u\in C^{\infty}(M),\, 
\supp(u)\subset B(p,\delta) \}
\end{displaymath}
where $J_0$ is constructed as above on $M$. Observe that since $(B(q,\de),h) \subset (S^n,h)$ and $(B(p,\de),g) \subset (M,g)$ are isometric, 
we have 
\begin{align*} 
&\inf\{J_0(u)|\, u\in C^{\infty}(M),\, 
\supp(u)\subset B(p,\delta) \}  \\
& =\inf\{J(u)|\, u\in C^{\infty}(S^n),\, 
\supp(u)\subset B(q,\delta) \}.
\end{align*}

\noindent By (\ref{limonsphere}), we obtain that $\lim_{a \to \infty} m(a) < 0$ which proves Corollary \ref{massfunctioncor}.

\subsubsection{Proof of Corollary \ref{massfunctioncor2}}
It is sufficient to find a Riemannian metric $g$ 
on $S^n$ which is conformal to $\sigma^n$, 
flat on an open neighborhood 
of $p$ and satisfies $\scal_g\geq 0$. 
Choose an open neighborhood $U$ of $p$ on which 
$\sigma^n$ is conformally flat. 
Using stereographic projection at $-p$ we may 
write $\sigma^n=u^{4/(n-2)}\xi^n$ on $U$ where 
with $r=|x|_{\xi^n}$ we have 
\begin{displaymath}
u(r)=\Big(\frac{2}{1+r^2}\Big)^{\frac{n-2}{2}}.
\end{displaymath}
Let $\varepsilon>0$ be so small that 
$u''(r)<0$ on $[0,2\varepsilon)$ and such that 
the preimage of $B(0,2\varepsilon)\subset\mR^n$ 
under the stereographic projection is contained 
in $U$. 
Choose a smooth function $v$ on $[0,\infty)$ 
such that $v$ is constant on $[0,\varepsilon)$, 
$v=u$ on $[2\varepsilon,\infty)$ and such that 
on $[0,2\varepsilon)$ we have 
$v'(r)\leq 0$ and $v''(r)\leq 0$. 
We define $v$ as a radial function on $\mR^n$ 
and we obtain 
\begin{equation}
\label{Delta_xi_v}
\Delta_{\xi^n}v(x)=-v''(r)-\frac{n-1}{r}v'(r)\geq 0.
\end{equation}
We define $g=v^{4/(n-2)}\xi^n$ on $U$ and 
$g=\sigma^n$ on $S^n\setminus U$. 
Then $g$ is 
a smooth Riemannian metric on $S^n$ which is 
conformal to $\sigma^n$ and flat on an open 
neighborhood of $p$. 
Furthermore by the conformal transformation law 
(\ref{scalgg'}) for $L_{\xi^n}$ and by 
(\ref{Delta_xi_v}) we have $\scal_g\geq 0$ 
on $S^n$.

\subsection{Application 4: Surgery and positivity of mass} 
\label{section_surgery_positivity}

Let $(M,g)$ be a closed Riemannian manifold of dimension $n \geq 3$, let $p\in M$ and assume that $g$ is flat on an  open neighborhood $U$ of $p$. 
Let $f\in C^{\infty}(M)$ such that $f\equiv 0$ on $U$.
We keep the same notation as in Section \ref{variational}. Let now $\Om\subset M$ be an open subset containing $\supp(\eta)$. 
Assume that 
\begin{displaymath}
P_f|_{\Om}:\quad C^{\infty}(\Omega)
\to C^{\infty}(\Omega)
\end{displaymath}
is a positive operator with respect to 
Dirichlet boundary condition. 
Then, we define
\begin{displaymath}
m_{f, \Om} := 
-\inf\{I_f(u)|\,u\in C^{\infty}(M),\, u(p)=0,\, 
\supp(u) \subset \Omega\}
\end{displaymath}
Let $G_{f,\Om}$ be the Green function of $P_f|_{\Om}$ with Dirichlet boundary condition. Mimicking the proof of Theorem \ref{maintheorem}, one proves that $m_{f, \Om}$ is the mass of $G_{f,\Om}$. 
Clearly for any $\Om$ the following proposition is obvious from the definitions. 
\begin{proposition} \label{massom}
We have 
\begin{displaymath}
m_f \geq m_{f,\Om}.
\end{displaymath}
\end{proposition}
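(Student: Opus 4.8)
The plan is to derive Proposition \ref{massom} directly from the two variational characterizations at our disposal: the characterization of $m_f$ given by Theorem \ref{maintheorem}, namely $-m_f = \nu = \inf\{I_f(u)\mid u\in C^\infty(M),\ u(p)=0\}$, and the definition of $m_{f,\Om}$ as $-\inf\{I_f(u)\mid u\in C^\infty(M),\ u(p)=0,\ \supp(u)\subset\Om\}$. The point is simply that the second infimum is taken over a \emph{smaller} set of competitors than the first: every $u\in C^\infty(M)$ with $u(p)=0$ and $\supp(u)\subset\Om$ is in particular a $u\in C^\infty(M)$ with $u(p)=0$. Hence the infimum over the restricted class is greater than or equal to the infimum over the full class, i.e.
\begin{displaymath}
-m_{f,\Om} = \inf\{I_f(u)\mid u(p)=0,\ \supp(u)\subset\Om\} \geq \inf\{I_f(u)\mid u(p)=0\} = -m_f,
\end{displaymath}
which upon multiplying by $-1$ yields $m_f\geq m_{f,\Om}$, as claimed.

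The one preliminary point I would check before writing this chain of inequalities is that the functional $I_f$ appearing in the definition of $m_{f,\Om}$ is genuinely the same functional $I_f$ from Section \ref{variational} — that is, that in the region $\Om$ relevant for these competitors the expression $\int_{M\setminus\{p\}}(\eta r^{2-n}+u)P_f(\eta r^{2-n}+u)\,dv^g$ makes sense and does not secretly depend on the Dirichlet data. This is immediate: $\supp(\eta)\subset\Om$ by hypothesis on $\Om$, and $u$ is a globally smooth function on $M$, so $\eta r^{2-n}+u$ is smooth on $M\setminus\{p\}$ exactly as before; nothing about the definition of $I_f$ changes. The operator $P_f|_\Om$ with Dirichlet condition enters only to guarantee (via the ``mimicking'' argument announced just before the proposition) that this restricted infimum is finite and equals the mass of $G_{f,\Om}$, but for the inequality itself we do not even need that — we only need that both infima are taken against the \emph{same} integrand $I_f$ over nested admissible sets.

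There is essentially no obstacle here; this is a one-line monotonicity-of-infimum argument, which is exactly why the paper calls it ``obvious from the definitions.'' The only thing requiring any care — and it is a matter of bookkeeping rather than mathematics — is to make sure the sign conventions line up: both $m_f$ and $m_{f,\Om}$ are defined as \emph{minus} an infimum, so the nesting of admissible sets, which reverses under taking infima, reverses \emph{again} under the final negation, and one lands on $m_f\geq m_{f,\Om}$ rather than the reverse inequality. A short remark could be added that this is consistent with the intuition that restricting the manifold (imposing Dirichlet conditions on $\partial\Om$) should only make the mass smaller, since the Green function with Dirichlet boundary condition is pointwise dominated by the global one.
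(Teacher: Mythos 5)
Your argument is correct and is exactly the paper's: the paper declares the proposition ``obvious from the definitions'' precisely because $-m_{f,\Om}$ is an infimum of the same functional $I_f$ over the smaller class of test functions supported in $\Om$, so by Theorem \ref{maintheorem} it dominates $-m_f$, giving $m_f\geq m_{f,\Om}$ after negation. Your sign bookkeeping and the remark that the Dirichlet operator only enters to identify $m_{f,\Om}$ as a mass, not to prove the inequality, match the paper's intent.
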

This observation has nevertheless some interesting applications. A first one is the following: let $(\Omega,g_0)$ be a compact manifold with boundary and let $f_0$ be a function defined on $\Om$.  Assume that $(\Om, g_0)$ embeds isometrically in $(M,g)$  and let $f$ be such that $P_f$ is positive on $(M,g)$ and $f = f_0$ on $\Om \subset M$. Then, the mass of $P_f$ is bounded from below by a constant which depends only on $(\Om,g_0)$ and $f_0$. \\

\noindent Another application seems much more interesting. 
Let $(M,g)$ be a closed Riemannian manifold with positive Yamabe constant $Y(M,g)$. 
We assume that $g$ is flat around a point $p$. 
Now, we perform on $M$ a surgery of dimension 
$k \leq n-3$, i.\,e.\,we remove a tubular 
neighborhood of a sphere $S^k$ in $M$ and replace 
it by gluing the boundary with the boundary 
of the product 
$\overline{B}^{k+1} \times S^{n-k-1}$. 
Without loss of generality, we can assume that 
$p$ does not lie in the removed part. 
For more information on this procedure, see for 
instance \cite{ammann.dahl.humbert:13} 
or Section \ref{section_definition_metrics}. 
Then, it was proven by several authors (see 
\cite{gromov.lawson:80, schoen.yau:79c,  ammann.dahl.humbert:13}) 
that on the new manifold $N$ one can construct a 
new metric $h$ with positive Yamabe constant 
which is flat  around $p$. 
Moreover $h$ can be constructed in such a way that 
it coincides with $g$ on $M$ 
except on an arbitrarily small open neighborhood of 
the removed sphere in $M$. 
Then, a natural question is: assume that the mass 
$m(M,g)$ of $L_g$ is positive. Does this imply that 
the mass  $m(N,h)$ of $L_h$ is also positive? 
Observe that  Proposition \ref{massom} gives an 
immediate positive answer to this question. 
Indeed, for $\ep>0$, define 
\begin{displaymath}
\Om_\ep:=\{ x \in M |\; \dist_g(x, S) > \ep \}
\end{displaymath} 
where $S$ is the surgery $k$-sphere. 
Then we prove the following theorem. 

\begin{theorem}
\label{theorem_surgery_positivity}
For every $\varepsilon>0$ let $h_{\varepsilon}$ 
be a Riemannian metric on $N$ such that 
$Y(N,h_{\varepsilon})>0$ and $h_{\varepsilon}=g$ 
on $\Omega_{\varepsilon}$. 
Then we have 
\begin{displaymath}
\liminf_{\varepsilon\to 0}m(N,h_{\varepsilon})
\geq m(M,g). 
\end{displaymath}
\end{theorem}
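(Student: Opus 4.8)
The plan is to combine the variational characterization of the mass (Theorem \ref{maintheorem}) with the monotonicity estimate of Proposition \ref{massom}, applied on the open set $\Omega_\varepsilon$. The key observation is that $\Omega_\varepsilon \subset M$ is also, by construction of $h_\varepsilon$, isometrically contained in $(N, h_\varepsilon)$ with the same function $f \equiv 0$ on a neighborhood of $p$; since $p \notin S$, for $\varepsilon$ small enough the support of a fixed cut-off function $\eta$ lies inside $\Omega_\varepsilon$, so the quantities $m_{0,\Omega_\varepsilon}$ computed inside $M$ and inside $N$ literally agree. Thus $m(N,h_\varepsilon) \geq m_{0,\Omega_\varepsilon}$ by Proposition \ref{massom} (applied on $N$), and it remains to show that $\liminf_{\varepsilon\to 0} m_{0,\Omega_\varepsilon} \geq m(M,g)$, where now everything is happening inside the \emph{fixed} manifold $(M,g)$.

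First I would fix, once and for all, a cut-off function $\eta$ as in Section \ref{variational} whose support is a compact subset of $M \setminus S$; this is possible since $p \notin S$ and $S$ has positive codimension. For all $\varepsilon$ small enough, $\supp(\eta) \subset \Omega_\varepsilon$, and the Dirichlet positivity of $P_f|_{\Omega_\varepsilon}$ follows from the positivity of $P_f$ on $M$ (the Dirichlet first eigenvalue on a subdomain dominates the one on $M$). By the variational description, $-m_{0,\Omega_\varepsilon} = \inf\{ I_f(u) \mid u \in C^\infty(M),\ u(p)=0,\ \supp(u) \subset \Omega_\varepsilon\}$ and $-m(M,g) = \inf\{I_f(u) \mid u \in C^\infty(M),\ u(p)=0\}$. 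Since the admissible class for $m_{0,\Omega_\varepsilon}$ shrinks as $\varepsilon \to 0$, the numbers $-m_{0,\Omega_\varepsilon}$ are non-decreasing in the relevant direction and bounded below by $-m(M,g)$; equivalently $m_{0,\Omega_\varepsilon} \leq m(M,g)$ always, which is the \emph{wrong} inequality, so the real content is the reverse asymptotic bound.

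The heart of the proof is therefore to show $\liminf_{\varepsilon \to 0}(-m_{0,\Omega_\varepsilon}) \leq -m(M,g)$, i.e. that one can approximate the global minimizer by test functions supported away from $S$. Let $u \in C^\infty(M)$ with $u(p)=0$ be a near-minimizer for $-m(M,g)$, so $I_f(u) \leq -m(M,g) + \delta$. I would multiply $u$ by a cut-off $\chi_\varepsilon$ that vanishes on a neighborhood of $S$ and equals $1$ outside $\Omega_{\varepsilon}$-type collar, with $|\diff \chi_\varepsilon| \leq C/\varepsilon$, exactly as in the proof of Lemma \ref{lemma_mu=nu}. Using the cut-off formula \eqref{cut_off_formula} one gets
\begin{displaymath}
I_f(\chi_\varepsilon u) \leq I_f(u) + \frac{C}{\varepsilon^2}\int_{A_\varepsilon} u^2\, dv^g + (\text{lower order}),
\end{displaymath}
where $A_\varepsilon$ is the annular region where $\diff\chi_\varepsilon \neq 0$, of $g$-volume $O(\varepsilon^{n-k})$ since $S$ has dimension $k$. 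Because $k \leq n-3$ we have $n-k \geq 3 > 2$, so $\varepsilon^{-2}\vol(A_\varepsilon) = O(\varepsilon^{n-k-2}) \to 0$; the remaining cross terms and zeroth-order terms are controlled the same way. Since $\chi_\varepsilon u$ is supported in $\Omega_{c\varepsilon}$ and vanishes at $p$, it is admissible for $m_{0,\Omega_{c\varepsilon}}$, giving $-m_{0,\Omega_{c\varepsilon}} \leq I_f(\chi_\varepsilon u) \to I_f(u) \leq -m(M,g)+\delta$. Letting $\varepsilon \to 0$ and then $\delta \to 0$ yields $\limsup_{\varepsilon\to 0} (-m_{0,\Omega_\varepsilon}) \leq -m(M,g)$, hence $\liminf_{\varepsilon\to 0} m_{0,\Omega_\varepsilon} \geq m(M,g)$, and combined with $m(N,h_\varepsilon) \geq m_{0,\Omega_\varepsilon}$ this proves the theorem.

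The main obstacle is purely bookkeeping: making precise that $m_{0,\Omega_\varepsilon}$ is genuinely the \emph{same} number whether one views $\Omega_\varepsilon$ as a subdomain of $(M,g)$ or of $(N,h_\varepsilon)$ — this requires checking that the Green function with Dirichlet conditions, its expansion at $p$, and hence the mass, depend only on the isometry type of the domain together with $f$, which is immediate from uniqueness of the Dirichlet Green function but must be stated. The analytic estimate with the cut-off is routine and copies Lemma \ref{lemma_mu=nu}, the only new input being the codimension bound $n-k \geq 3$ that makes the annulus volume beat the $\varepsilon^{-2}$ blow-up of the gradient — exactly the same mechanism that makes surgery of codimension $\geq 3$ preserve positivity of the Yamabe constant.
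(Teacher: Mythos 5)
Your proposal is correct and follows essentially the same route as the paper: cut off a (near-)minimizer by a function $\chi_\varepsilon$ vanishing near the surgery sphere, use the cut-off formula \eqref{cut_off_formula} together with $\vol(A_\varepsilon)=O(\varepsilon^{n-k})$ and $n-k\geq 3$ to kill the gradient term, and conclude via Proposition \ref{massom}. The only differences are cosmetic: the paper works with the exact minimizer of $J_g$ from Lemma \ref{existu} rather than a near-minimizer of $I_f$, and it leaves implicit the identification of $m_{f,\Omega_\varepsilon}$ computed in $(M,g)$ and in $(N,h_\varepsilon)$, which you rightly spell out.
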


\begin{proof}
Let $u\in C^{\infty}(M)$ such that we have 
$-m(M,g)=J_g(u)$. 
Let $\chi_\ep$ be a smooth function on $M$ 
equal to $1$ 
on $\Om_{2 \ep}$, equal to $0$ 
on $M \setminus \Om_\ep$ and such that 
$|\diff \chi_\ep |_g \leq \frac{2}{\ep}$. 
We may consider the functions $\chi_{\varepsilon}u$ 
as functions on $N$. 
We write 
$A_{\varepsilon}:=\Omega_{\varepsilon}\setminus
\Omega_{2\varepsilon}$. 
Since on $\supp(\chi_{\varepsilon})$ we have 
$h_{\varepsilon}=g$ we obtain by 
(\ref{cut_off_formula}) 
\begin{align*}
\int_{N} u\chi_{\varepsilon}L_{h_{\varepsilon}}
(u\chi_{\varepsilon})\,dv^{h_{\varepsilon}}&=
\int_{M} 
(u^2|\diff\chi_{\varepsilon}|_g^2
+\chi_{\varepsilon}^2uL_g u)\,dv^g\\
&\leq \frac{4}{\varepsilon^2}
\int_{A_{\varepsilon}}u^2\,dv^g
+\int_{M}\chi_{\varepsilon}^2uL_gu\,dv^g.
\end{align*}
Let $k\in\{0,...,n-3\}$ be the dimension of the 
surgery sphere. 
Since there exists $C>0$ such that 
for all $\varepsilon$ we have 
$\vol(A_{\varepsilon})\leq C\varepsilon^{n-k}$, 
the first term on the 
right hand side tends to $0$ as 
$\varepsilon\to 0$. 
We conclude that 
\begin{displaymath}
\limsup_{\varepsilon\to 0}
J_{h_{\varepsilon}}(u\chi_{\varepsilon})
\leq J_g(u).
\end{displaymath}
Since $\supp(\chi_{\varepsilon}u)\subset
\Omega_{\varepsilon}$ it follows that  
\begin{displaymath}
\limsup_{\varepsilon\to 0}
\big(-m_{\frac{n-2}{4(n-1)}\scal_{h_{\varepsilon}},\Om_{\varepsilon}}
\big)
\leq J_g(u).
\end{displaymath}
and thus 
\begin{displaymath}
m(M,g)\leq
\liminf_{\varepsilon\to 0} 
m_{\frac{n-2}{4(n-1)}\scal_{h_{\varepsilon}},\Omega_{\varepsilon}}.
\end{displaymath}
The assertion now follows from Proposition 
\ref{massom}. 
\end{proof}


\noindent Theorem \ref{theorem_surgery_positivity} shows that the positivity of mass is 
preserved by surgery of dimension 
$k\in\{0,...,n-3\}$. 
In the next section we will obtain a much stronger 
result, namely that also a negative mass is 
preserved under such surgeries.

\section{Preservation of mass by surgery}
\label{section_surgery_preservation}

\subsection{The result}

Let $(M,g)$ be a closed 
Riemannian manifold of dimension $n\geq3$ 
with positive Yamabe constant $Y(M,g)$. 
Assume that $g$ is flat on an open 
neighborhood of a point $p\in M$. 
Then we can define the mass $m(M,g)$ at $p$. 
Let $N$ be obtained from $M$ by a surgery 
of dimension $k\in\{0,...,n-3\}$ which does 
not hit the point $p$. 
Our aim is to show that the mass $m(M,g)$ at $p$ 
is preserved by this procedure. 
More precisely we will prove the following 
theorem. 

\begin{theorem}
\label{theorem_preservation_mass}
There exists a sequence of metrics 
$(g_{\theta})$ on $N$ 
such that for every $\theta$ the mass 
$m(N,g_{\theta})$ at $p$ can be defined 
and such that we have 
\begin{displaymath}
\lim_{\theta\to 0}m(N,g_{\theta})=m(M,g). 
\end{displaymath}
\end{theorem}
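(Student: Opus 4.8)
The plan is to combine the variational characterization of Theorem~\ref{maintheorem} with an elliptic limiting argument. For the metrics $g_\theta$ I would take the surgery metrics of \cite{ammann.dahl.humbert:13} (following Gromov--Lawson and Schoen--Yau): they satisfy $Y(N,g_\theta)>0$, so $m(N,g_\theta)$ is defined; they agree with $g$ outside an arbitrarily small neighbourhood $U_\theta$ of the surgery sphere $S$, with $U_\theta$ staying a fixed distance away from $p$, so that $g_\theta$ is flat near $p$ and the same $\eta$, $F_\eta$ from Section~\ref{variational} can be used for every $\theta$; and, crucially, they can be built with uniformly controlled geometry, in particular with $\lambda_1(L_{g_\theta})\ge\lambda_0>0$ and $\scal_{g_\theta}\ge -C_0$ for constants $\lambda_0,C_0$ independent of $\theta$. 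With such $g_\theta$ the inequality $\liminf_{\theta\to0}m(N,g_\theta)\ge m(M,g)$ is already contained in Theorem~\ref{theorem_surgery_positivity} (viewing $g_\theta$ as one of the metrics $h_\varepsilon$ there, with $\varepsilon=\varepsilon(\theta)\to0$), so the real content is the opposite inequality; in fact the argument I describe gives the full limit directly.

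The starting point is the identity, valid for any positive operator $P_f$ with $J_f$-minimizer $u$ (Lemma~\ref{existu}): since $P_f u=-F_\eta$, Theorem~\ref{maintheorem} gives
\begin{displaymath}
-m_f=J_f(u)=\int_{M\setminus\{p\}}\eta r^{2-n}F_\eta\,dv^g+\int_M uF_\eta\,dv^g .
\end{displaymath}
Apply this on $(N,g_\theta)$ with $P_f=L_{g_\theta}$ and let $u_\theta\in C^\infty(N)$ be the $J_{g_\theta}$-minimizer; then $-m(N,g_\theta)=c_0+\int_N u_\theta F_\eta\,dv^{g_\theta}$, where $c_0=\int_{M\setminus\{p\}}\eta r^{2-n}F_\eta\,dv^g$ is a fixed constant because the integrand is supported where $g_\theta=g$. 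Next the a priori bound: positivity of $L_{g_\theta}$ gives $0\le\int_N u_\theta L_{g_\theta}u_\theta=-\int_N u_\theta F_\eta$, so using the uniform spectral gap and the Cauchy--Schwarz inequality, $\lambda_0\|u_\theta\|_{L^2}^2\le\int_N u_\theta L_{g_\theta}u_\theta=\big|\int_N u_\theta F_\eta\big|\le\|F_\eta\|_{L^2}\|u_\theta\|_{L^2}$, whence $\|u_\theta\|_{L^2(N,g_\theta)}\le\|F_\eta\|_{L^2}/\lambda_0$; together with $\scal_{g_\theta}\ge-C_0$ this makes $(u_\theta)$ uniformly bounded in $H^{1,2}(N,g_\theta)$. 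Now observe that $S$ has codimension at least $3$, hence Hausdorff dimension at most $n-3<n-2$, so it has zero $H^{1,2}$-capacity; consequently every function in $H^{1,2}(M\setminus S)$ extends to $H^{1,2}(M)$ and $C^\infty_c(M\setminus S)$ is dense in $H^{1,2}(M)$. On any compact $K\subset M\setminus S$ we have $g_\theta=g$ for $\theta$ small and $L_gu_\theta=-F_\eta$, so by interior elliptic estimates and a diagonal exhaustion a subsequence of $(u_\theta)$ converges in $C^2_{\mathrm{loc}}(M\setminus S)$, and weakly in $H^{1,2}_{\mathrm{loc}}(M\setminus S)$, to some $u_\infty$ with $\|u_\infty\|_{H^{1,2}(M\setminus S)}\le\liminf_\theta\|u_\theta\|_{H^{1,2}(N,g_\theta)}<\infty$; thus $u_\infty\in H^{1,2}(M)$, and $L_gu_\infty=-F_\eta$ on $M\setminus S$. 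By the density statement this equation holds on all of $M$, so by invertibility of $L_g$ (here $Y(M,g)>0$) $u_\infty$ is exactly the $J_g$-minimizer on $M$. Finally, since $F_\eta$ is supported in a fixed compact subset of $M\setminus S$ on which $u_\theta\to u_\infty$ strongly in $L^2$ and $g_\theta=g$, we get $\int_N u_\theta F_\eta\,dv^{g_\theta}\to\int_M u_\infty F_\eta\,dv^g$, hence $m(N,g_\theta)\to m(M,g)$ along the subsequence; since every subsequence has such a sub-subsequence, the whole sequence converges.

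The step I expect to be the main obstacle is not the limiting argument --- which is routine once the bounds are in place --- but the construction of the metrics $g_\theta$ with the required \emph{uniform} estimates: a uniform positive lower bound for $\lambda_1(L_{g_\theta})$ (which one may in turn deduce from a uniform positive lower bound for the Yamabe constant together with a uniform upper bound for the volume) and a uniform lower bound for $\scal_{g_\theta}$, all while keeping $g_\theta=g$ near $p$ and on an exhausting family of compacta of $M\setminus S$, and while keeping $Y(N,g_\theta)>0$. This is precisely what the quantitative surgery constructions of \cite{ammann.dahl.humbert:13} deliver --- the control of the Yamabe constant under surgery being their main technical point --- and invoking those results is where the techniques alluded to in the introduction enter the proof.
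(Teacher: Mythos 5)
Your overall skeleton (use Theorem~\ref{maintheorem}, take the surgery metrics $g_\theta$ of \cite{ammann.dahl.humbert:13}, consider the minimizers $u_\theta$ with $L_{g_\theta}u_\theta=-F_\eta$, and pass to the limit in $-m(N,g_\theta)=\int\eta r^{2-n}F_\eta\,dv^g+\int u_\theta F_\eta\,dv^{g_\theta}$) is close in spirit to the paper's proof, but the a priori estimates on which your limiting argument rests are assumed rather than proved, and the properties you attribute to the metrics of \cite{ammann.dahl.humbert:13} are not the ones they have. First, those metrics do \emph{not} agree with $g$ outside an arbitrarily small neighbourhood of the surgery sphere: they equal $g$ only outside a \emph{fixed} neighbourhood $U^M(R_{\max})$, and on $U^M(R_0)\setminus U^M(\theta)$ one has $g_\theta=F^2g$ with $F=r^{-1}$, i.e.\ a conformal stretching. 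Hence your statement ``on any compact $K\subset M\setminus S$ we have $g_\theta=g$ for $\theta$ small'' is false, and the lower bound $\liminf_\theta m(N,g_\theta)\ge m(M,g)$ is not literally an instance of Theorem~\ref{theorem_surgery_positivity}; the paper has to redo the cutoff argument using the conformal covariance of $L$ and the factor $F^{\frac{n-2}{2}}$ (this is its Step 2, and the analogous adaptation in Step 1).

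The more serious gap is the uniform spectral gap $\lambda_1(L_{g_\theta})\ge\lambda_0>0$, which is the engine of your whole $L^2$/$H^{1,2}$ compactness scheme. Your proposed justification (uniform lower Yamabe bound plus uniform upper volume bound) fails because $\vol^{g_\theta}(N)\to\infty$ as $\theta\to0$: the construction inserts a neck whose length is of order $A_\theta\to\infty$, and the paper explicitly notes that $\vol^{g_\theta}(U^N_\epsilon(b))\to\infty$, which is precisely why its $L^2$-estimate on the neck (Step 3) is nontrivial. What \cite{ammann.dahl.humbert:13} actually provides is a uniform lower bound on $Y(N,g_\theta)$, and from this one only gets a uniform $L^{2n/(n-2)}$ bound on $u_\theta$ (the paper's Step 4); on a region of unbounded volume this does not control the $L^2$ or $H^{1,2}$ norm, so your extraction of a limit in $H^{1,2}(M\setminus S)$ and the subsequent zero-capacity extension across $S$ have no basis. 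Converting the $L^{2n/(n-2)}$ bound into the needed $L^2$ control near the surgery locus is exactly the hard analytic core of the paper's proof: the $WS$-bundle estimate (Theorem~\ref{theorem_ws_bundle}, Step 3), a uniform $L^\infty$ bound obtained by a blow-up argument with the limit-space Lemmas \ref{lemma_limit_space_1}--\ref{lemma_limit_space_3} (Step 5), and a dyadic summation argument (Step 6) showing $\int_{A_\alpha}u_\theta^2\,dv^{g_\theta}\to0$. So the ``main obstacle'' you defer to the quantitative surgery construction is not delivered by it, and without an independent proof of a uniform spectral gap (or of the neck estimates that replace it) your argument does not close.
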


We will study an application of this theorem 
to the positive mass conjecture 
in Section \ref{section_pmt_non_spin}. 
But first we will prove Theorem 
\ref{theorem_preservation_mass}. 
We will define the family of metrics $g_{\theta}$ 
in Section \ref{section_definition_metrics}. 
The same family of metrics has been used in 
the article \cite{ammann.dahl.humbert:13}. 
In Section 
\ref{section_proof_preservation_mass} we will 
prove that this family of metrics has the 
property stated in the theorem. 
We will use the variational characterization 
of the mass according to Theorem \ref{maintheorem} 
and we will also use some techniques from 
the article \cite{ammann.dahl.humbert:13}, which 
we briefly recall in 
Sections \ref{section_limit_spaces} and 
\ref{section_ws_bundles}.

\subsection{Definition of the metrics $g_{\theta}$}
\label{section_definition_metrics}

We recall a construction called the connected sum 
along a submanifold using the notation of the article 
\cite{ammann.dahl.humbert:13}. 
On the manifold obtained in this way we 
define a family of Riemannian metrics 
$(g_{\theta})_{\theta>0}$ which is described in 
the same article. 
We will mostly be interested in surgery which is 
a special case of this construction. 
Let $(M_1,g_1)$, $(M_2,g_2)$ be complete Riemannian 
manifolds of dimension $n$ and let $W$ be a 
closed manifold of dimension $k\leq n$. 
Let $\bar{w}_i$: $W\times\mR^{n-k}\to TM_i$, $i=1,2$, 
be embeddings. 
We assume that $\bar{w}_i$ 
maps $W\times\{0\}$ to the zero 
section of $TM_i$ which we identify with $M_i$. 
Thus we obtain embeddings $W\to M_i$ and we will 
denote the images of these embeddings by 
$W_i'\subset M_i$. 
We assume that for every $x\in W$ the 
embeddings $\bar{w}_i$ restrict to linear 
isomorphisms 
$\{x\}\times\mR^{n-k}\to N_{\bar{w}_i(x,0)}W_i'$, where 
$NW_i'$ denotes the normal bundle of $W_i'$ with respect 
to the metric $g_i$. 
For $i=1,2$ let $r_i$ be the function on $M_i$ 
giving the distance to $W_i'$ and define 
$U^{M_i}(c):=\{x\in M_i|\,r_i(x)<c\}$ for every $c>0$. 
There exists $R_{\max}>0$ such that 
the maps $w_i:=\exp^{g_i}\circ\bar{w}_i$ define 
diffeomorphisms 
\begin{displaymath}
w_i:\quad W\times B^{n-k}(R_{\max})\to 
U^{M_i}(R_{\max}), 
\quad i=1,2.
\end{displaymath}
In general, the Riemannian metrics $g_i$ do not have 
a corresponding product structure on 
$U^{M_i}(R_{\max})$. 
We introduce error terms $T_i$ measuring the 
differences 
from the product metrics. 
Namely, if $h_i$ denote the restrictions of $g_i$ 
to $W_i'$ and if $\sigma^{n-k-1}$ is the standard 
metric on $S^{n-k-1}$ we have 
\begin{displaymath}
g_i=h_i+dr_i^2+r_i^2\sigma^{n-k-1}+T_i
\end{displaymath}
on $U^{M_i}(R_{\max})$, $i=1,2$. 
Now, for every $\epsilon\in(0,R_{\max})$ we define 
\begin{displaymath}
N_{\epsilon}:=(M_1\setminus U^{M_1}(\epsilon))\cup 
(M_2\setminus U^{M_2}(\epsilon))/\sim,
\end{displaymath}
and for every $c\in(\epsilon,R_{\max})$ 
\begin{displaymath}
U^{N}_{\epsilon}(c):=
(U^{M_1}(c)\setminus U^{M_1}(\epsilon))\cup 
(U^{M_2}(c)\setminus U^{M_2}(\epsilon))/\sim,
\end{displaymath}
where $\sim$ means that we identify the point 
$x\in\partial U^{M_1}(\epsilon)$ with the point 
$w_2\circ w_1^{-1}(x)\in\partial U^{M_2}(\epsilon)$. 
Therefore we have 
\begin{displaymath}
N_{\epsilon}=(M_1\setminus U^{M_1}(c))\cup 
(M_2\setminus U^{M_2}(c))\cup 
U^{N}_{\epsilon}(c).
\end{displaymath}
We say that $N_{\epsilon}$ is obtained from $M_1$ 
and $M_2$ by a connected sum along $W$ with 
parameter $\epsilon$. 
Since the diffeomorphism type of the manifold 
$N_{\epsilon}$ is independent of the choice of 
$\epsilon$ we will often write $N$ instead of 
$N_{\epsilon}$. 
Our next aim is to define for a given $\theta>0$ 
a Riemannian metric $g_{\theta}$ 
on $N_{\epsilon}$ for $\epsilon>0$ small enough. 
We choose numbers $R_0$, $\theta$, $\delta_0$ 
such that 
\begin{displaymath}
R_{\max}>R_0>\theta>\delta_0>0.
\end{displaymath}
Then we choose 
$A_{\theta}\in(\theta^{-1},(\delta_0)^{-1})$ 
and we put $\epsilon:=e^{-A_{\theta}}\delta_0$. 
Then we define $N_{\epsilon}$ and 
$U^{N}_{\epsilon}(c)$ for $c>0$ as above.  
On the set $U^{N}_{\epsilon}(R_{\max})$ we define 
the coordinate function $t$ by 
\begin{displaymath}
t:=\left\{
\begin{array}{ll}
-\ln r_1+\ln\epsilon,&
\textrm{on }U^{M_1}(R_{\max})
\setminus U^{M_1}(\epsilon),\\
\ln r_2-\ln\epsilon,&
\textrm{on }U^{M_2}(R_{\max})
\setminus U^{M_2}(\epsilon).
\end{array} 
\right.
\end{displaymath}
We choose smooth functions $F$ on $N_{\epsilon}$ 
and $f$ on $U^{N}_{\epsilon}(R_{\max})$ such that 
\begin{align*}
F(x)&=
\left\{
\begin{array}{ll}
1, & \textrm{if }
x\in N_{\epsilon}\setminus U^{N}_{\epsilon}(R_{\max}),\\
r_i(x)^{-1}, & \textrm{if }
x\in U^{M_i}(R_0)\setminus U^{M_i}(\epsilon),\,i=1,2,
\end{array} 
\right.\\
f(x)&=
\left\{
\begin{array}{ll}
-|t(x)|-\ln\epsilon, & \textrm{if }
x\in N_{\epsilon}\setminus U^{N}_{\epsilon}(\theta),\\
\ln A_{\theta}, & \textrm{if }
x\in U^{N}_{\epsilon}(\delta_0)
\end{array} 
\right.
\end{align*}
and such that $|df/dt|\leq 1$ for all $t$ and 
$\|d^2f/dt^2\|_{L^{\infty}}\to 0$ as $\theta\to 0$. 
We choose a smooth function $\chi$: $\mR\to[0,1]$ 
such that $\chi=0$ on $(-\infty,-1]$, 
$\chi=1$ on $[1,\infty)$ and $|\chi'|\leq 1$ 
on $\mR$. 
Then we define 
\begin{displaymath}
g_{\theta}:=
\left\{
\begin{array}{ll}
F^2g_i, & 
\textrm{on }M_i\setminus U^{M_i}(\theta),\\
e^{2f(t)}(h_i+T_i)+dt^2+\sigma^{n-k-1}, & 
\textrm{on }U^{M_i}(\theta)
\setminus U^{M_i}(\delta_0),\\
\hspace{-0.7em}\left. 
\begin{array}{l}
A_{\theta}^2\chi(t/A_{\theta})(h_2+T_2)\\
+A_{\theta}^2(1-\chi(t/A_{\theta}))(h_1+T_1)\\
+dt^2+\sigma^{n-k-1}, 
\end{array}
\right\}
&
\textrm{on }U^{N}_{\epsilon}(\delta_0).
\end{array} 
\right.
\end{displaymath}
On $U^{N}_{\epsilon}(R_0)$ we write the metric 
$g_{\theta}$ as 
\begin{displaymath}
g_{\theta}=e^{2f(t)}\tilde{h}_t+dt^2
+\sigma^{n-k-1}+\widetilde{T}_t,
\end{displaymath}
where $\tilde{h}_t$ is defined by 
\begin{displaymath}
\tilde{h}_t:=
\chi(t/A_{\theta})h_2+(1-\chi(t/A_{\theta}))h_1,
\end{displaymath}
for $t\in\mR$ and where the error term 
$\widetilde{T}_t$ is equal to 
\begin{displaymath}
\widetilde{T}_t:=e^{2f(t)}
(\chi(t/A_{\theta})T_2+(1-\chi(t/A_{\theta}))T_1).
\end{displaymath}
On $U^{N}_{\epsilon}(R_0)$ we also define the 
metric without error term 
\begin{equation}
\label{g_theta_primed}
g_{\theta}':=g_{\theta}-\widetilde{T}_t
=e^{2f(t)}\tilde{h}_t+dt^2+\sigma^{n-k-1}.
\end{equation}
We will need upper bounds for the error term 
$\widetilde{T}$ and its derivatives. 
As in Section 6.2 of the article 
\cite{ammann.dahl.humbert:13} one can show that 
there exists $C>0$ such that for all $\theta$ 
we have 
\begin{align}
\label{T_estimate_1}
|\widetilde{T}_t|_{g_{\theta}'}
&\leq Ce^{-f(t)}\\
\label{T_estimate_2}
|\nabla^{g_{\theta}'}\widetilde{T}_t|_{g_{\theta}'}
&\leq Ce^{-f(t)}\\
\label{T_estimate_3}
|\scal_{g_{\theta}}-\scal_{g_{\theta}'}|
&\leq Ce^{-f(t)}.
\end{align}
In the special case where $M_2=S^n$, $W=S^k$, 
$k\leq n$, and $S^k\to S^n$ is the standard embedding 
we say that $N_{\epsilon}$ is obtained from $M_1$ 
by surgery of dimension $k$ with parameter $\epsilon$. 
Note that in this case 
$M_2\setminus U^{M_2}(\epsilon)$ is diffeomorphic to 
$\overline{B}^{k+1}\times S^{n-k-1}$.

\subsection{Limit spaces and limit solutions}
\label{section_limit_spaces}

In the proof of Theorem 
\ref{theorem_preservation_mass} we will construct 
solutions to the equation $\Delta_g u=0$ on 
certain limit spaces $(V,g)$. 
We need the following lemmas which are 
adapted versions of Lemmas 4.1, 4.2 and 4.3 
in \cite{ammann.dahl.humbert:13}. 

\begin{lemma}
\label{lemma_limit_space_1}
Let $V$ be a manifold of dimension $n$. 
Let $(q_{\alpha})_{\alpha}$ 
be a sequence of points in $V$ 
that converges to a point $q$ as $\alpha\to 0$. 
Let $(\gamma_{\alpha})_{\alpha}$ 
be a sequence of metrics defined on an open 
neighborhood $O$ of $q$ that converges to 
a metric $\gamma_0$ in the $C^2(O)$-topology 
as $\alpha\to 0$. 
Let $(b_{\alpha})_{\alpha}$ be 
a sequence of positive real numbers such that 
$b_{\alpha}\to\infty$ as $\alpha\to 0$. 
Then for every $r>0$ there exists for 
$\alpha$ small enough a diffeomorphism 
\begin{displaymath}
\Theta_{\alpha}:\quad 
B^n(r)\to 
B^{\gamma_{\alpha}}(q_{\alpha},b_{\alpha}^{-1}r)
\end{displaymath}
with $\Theta_{\alpha}(0)=q_{\alpha}$ such that 
the metric 
$\Theta_{\alpha}^*(b_{\alpha}^2\gamma_{\alpha})$ 
tends to the flat metric $\xi^n$ in $C^2(B^n(r))$. 
\end{lemma}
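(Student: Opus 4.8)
The plan is to realise $\Theta_{\al}$ as a normalised rescaling of the Riemannian exponential map of $\ga_{\al}$ at $q_{\al}$; this automatically forces the target of $\Theta_{\al}$ to be the metric ball $B^{\ga_{\al}}(q_{\al},b_{\al}^{-1}r)$. First I would collect the uniform data the construction needs. Since $\ga_{\al}\to\ga_0$ in $C^2(O)$ and $q_{\al}\to q$, the continuous dependence of geodesics on the metric and the base point (compare Section 4 of \cite{ammann.dahl.humbert:13}) provides a radius $\rho_0>0$ and an $\al_0>0$ such that for all $\al\le\al_0$: the ball $B^{\ga_{\al}}(q_{\al},\rho_0)$ is contained in $O$; the map $\exp^{\ga_{\al}}_{q_{\al}}$ restricts to a diffeomorphism from the ball of radius $\rho_0$ centred at $0$ in $(T_{q_{\al}}V,\ga_{\al}|_{q_{\al}})$ onto $B^{\ga_{\al}}(q_{\al},\rho_0)$; and the coefficients $g^{\al}_{ij}$ of $\ga_{\al}$ in $\ga_{\al}$-normal coordinates centred at $q_{\al}$ (with respect to a $\ga_{\al}$-orthonormal frame) satisfy $g^{\al}_{ij}(0)=\de_{ij}$ together with a uniform bound $\|g^{\al}_{ij}\|_{C^2(B^n(\rho_0))}\le C$ with $C$ independent of $\al$.

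Next I would fix, for each $\al\le\al_0$, a $\ga_{\al}$-orthonormal basis of $T_{q_{\al}}V$ depending continuously on $\al$ — for instance the Gram--Schmidt orthonormalisation of the coordinate frame of a fixed chart around $q$ — and denote by $E_{\al}\colon(\mR^n,\xi^n)\to(T_{q_{\al}}V,\ga_{\al}|_{q_{\al}})$ the associated linear isometry. Given $r>0$, since $b_{\al}\to\infty$ there is $\al_1\le\al_0$ with $b_{\al}^{-1}r<\rho_0$ for all $\al\le\al_1$, and I set
\begin{displaymath}
\Theta_{\al}\colon\quad B^n(r)\to V,\qquad
\Theta_{\al}(x):=\exp^{\ga_{\al}}_{q_{\al}}\big(b_{\al}^{-1}E_{\al}x\big).
\end{displaymath}
By the first step $\Theta_{\al}$ is a diffeomorphism onto $B^{\ga_{\al}}(q_{\al},b_{\al}^{-1}r)$ and $\Theta_{\al}(0)=q_{\al}$, so it remains only to establish the $C^2$ convergence of the pulled-back metric.

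For this I would write $\Phi_{\al}(y):=\exp^{\ga_{\al}}_{q_{\al}}(E_{\al}y)$ for the normal-coordinate chart, so that $\Theta_{\al}=\Phi_{\al}\circ m$ with $m(x):=b_{\al}^{-1}x$; then the coefficients of $\Theta_{\al}^*(b_{\al}^2\ga_{\al})$ are the functions $x\mapsto g^{\al}_{ij}(b_{\al}^{-1}x)$, the factor $b_{\al}^2$ cancelling exactly the $b_{\al}^{-2}$ produced by the scaling $m$. From $g^{\al}_{ij}(0)=\de_{ij}$ and the uniform $C^2$ bound one obtains on $B^n(r)$ the estimates $|g^{\al}_{ij}(b_{\al}^{-1}x)-\de_{ij}|\le Cb_{\al}^{-1}r$, $|\pa_k(g^{\al}_{ij}\circ m)|\le Cb_{\al}^{-1}$ and $|\pa_k\pa_l(g^{\al}_{ij}\circ m)|\le Cb_{\al}^{-2}$, and all three right-hand sides tend to $0$ as $\al\to 0$. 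Hence $\Theta_{\al}^*(b_{\al}^2\ga_{\al})\to\xi^n$ in $C^2(B^n(r))$, which is the claim.

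The only genuinely delicate step is the first one: the uniform lower bound $\rho_0$ and, above all, the uniform $C^2$ control of the metric coefficients in $\ga_{\al}$-normal coordinates. This is precisely where the $C^2$-convergence hypothesis enters, through the dependence of $\exp^{\ga_{\al}}_{q_{\al}}$ and its first two derivatives on $\ga_{\al}$; once it is granted, the rest is bookkeeping made trivial by $b_{\al}\to\infty$. As the statement is an adapted version of Lemma 4.1 of \cite{ammann.dahl.humbert:13}, I would import these estimates from there rather than redo them.
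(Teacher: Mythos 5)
You have picked the construction that the paper itself (which gives no argument, only the pointer to Lemma 4.1 of \cite{ammann.dahl.humbert:13}) has in mind: $\Theta_{\alpha}=\exp^{\gamma_{\alpha}}_{q_{\alpha}}\circ E_{\alpha}\circ(b_{\alpha}^{-1}\cdot)$, so the target is the exact geodesic ball and the pulled-back coefficients are $g^{\alpha}_{ij}(b_{\alpha}^{-1}x)$ in $\gamma_{\alpha}$-normal coordinates. The gap is precisely the step you call delicate, and the justification you offer for it is not sufficient. From $\gamma_{\alpha}\to\gamma_{0}$ in $C^{2}(O)$ you get uniform $C^{1}$ bounds on the Christoffel symbols, hence uniform $C^{1}$ control of $\exp^{\gamma_{\alpha}}_{q_{\alpha}}$, a uniform radius $\rho_{0}$, and (via Jacobi-field or equicontinuity arguments) the $C^{0}$ statement $|g^{\alpha}_{ij}(y)-\delta_{ij}|\le C|y|^{2}$; this covers the diffeomorphism claim and the zeroth-order convergence. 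It does \emph{not} give the uniform bound $\|g^{\alpha}_{ij}\|_{C^{2}(B^{n}(\rho_{0}))}\le C$: the coefficients $g^{\alpha}_{ij}$ are built from first derivatives of $\exp^{\gamma_{\alpha}}_{q_{\alpha}}$, so their first (resp.\ second) derivatives involve second (resp.\ third) derivatives of the exponential map, and by the geodesic ODE these require uniform $C^{2}$ (resp.\ $C^{3}$) bounds on the Christoffel symbols, i.e.\ $C^{3}$ (resp.\ $C^{4}$) bounds on $\gamma_{\alpha}$. This is the classical two-derivative loss of geodesic normal coordinates, and ``continuous dependence of geodesics on the metric'' cannot recover it. Consequently your estimates $|\partial_{k}(g^{\alpha}_{ij}\circ m)|\le Cb_{\alpha}^{-1}$ and $|\partial_{k}\partial_{l}(g^{\alpha}_{ij}\circ m)|\le Cb_{\alpha}^{-2}$ are not established; since $b_{\alpha}$ is an arbitrary sequence tending to infinity, there is no relation between $b_{\alpha}$ and the (uncontrolled) third and fourth derivatives of $\gamma_{\alpha}$ that would let you absorb them, so the $C^{1}$ and $C^{2}$ parts of the asserted convergence are exactly what remains unproved.

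Two standard ways to close this. (i) Use a fixed chart instead of $\gamma_{\alpha}$-normal coordinates, e.g.\ normal coordinates of the fixed smooth metric $\gamma_{0}$ centred at $q_{\alpha}$, composed with the linear map normalising $\gamma_{\alpha}|_{q_{\alpha}}$: there the coefficients of $\gamma_{\alpha}$ are uniformly bounded in $C^{2}$ for free, and your rescaling bookkeeping gives $C^{2}$ convergence with no loss; the price is that the image of $B^{n}(r)$ is then only approximately the ball $B^{\gamma_{\alpha}}(q_{\alpha},b_{\alpha}^{-1}r)$, so one must either add an adjustment to hit the exact ball or observe that an approximate ball suffices for the way the lemma is used in Step 5 of Section \ref{section_proof_preservation_mass}. (ii) Keep your construction but note that in every application in this paper the metrics in question converge locally in all $C^{k}$ norms (they are fixed near the blow-up point in Case 1, and convex combinations $\tilde{h}_{t}$ of two fixed metrics plus $\sigma^{n-k-1}$ in Case 2), so the higher-order uniform bounds your argument needs are genuinely available there. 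Either way, you should not attribute the uniform $C^{2}$ control in normal coordinates to the bare $C^{2}$ hypothesis; state explicitly where the extra derivatives come from, or switch to a fixed background chart.
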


\begin{proof}
see the proof of Lemma 4.1 
in \cite{ammann.dahl.humbert:13}. 
\end{proof}

\begin{lemma}
\label{lemma_limit_space_2}
Let $V$ be a manifold of dimension $n$. 
Let $(g_{\alpha})_{\alpha}$ be a sequence of 
metrics that 
converges to a metric $g$ in $C^2$ on all compact 
sets $K\subset V$ as $\alpha\to 0$. 
Assume that $(U_{\alpha})_{\alpha}$ is an increasing 
sequence of subdomains of $V$ such that 
$\bigcup_{\alpha}U_{\alpha}=V$. 
Let $u_{\alpha}\in C^2(U_{\alpha})$ be a sequence 
of positive functions such that 
$\|u_{\alpha}\|_{L^{\infty}(U_{\alpha})}$ is 
bounded independently of $\alpha$. 
We assume 
\begin{displaymath}
L_{g_{\alpha}}u_{\alpha}=0
\end{displaymath}
for all $\alpha$. 
Then there exists a non-negative function 
$u\in C^2(V)$ satisfying 
\begin{displaymath}
L_gu=0
\end{displaymath}
on $V$ and a subsequence of $u_{\alpha}$ that tends 
to $u$ in $C^1$ on each open set $\Omega\subset V$ 
with compact closure. 
In particular for every compact subset $K\subset V$ 
we have 
\begin{equation}
\label{limit_L_infty_norm}
\|u\|_{L^{\infty}(K)}=
\lim_{\alpha\to 0}\|\,u_{\alpha}\|_{L^{\infty}(K)}
\end{equation}
and 
\begin{equation}
\label{limit_L_r_norm}
\int_K u^r\,dv^g
=\lim_{\alpha\to 0}
\int_K u_{\alpha}^r\,dv^{g_{\alpha}}
\end{equation}
for every $r\geq 1$. 
\end{lemma}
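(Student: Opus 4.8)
The plan is to run a standard interior elliptic compactness argument along an exhaustion of $V$, in the spirit of the proof of Lemma 4.2 in \cite{ammann.dahl.humbert:13}.

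First I would fix an exhaustion $\Omega_1\Subset\Omega_2\Subset\cdots$ of $V$ by relatively compact open sets with $\bigcup_{j}\Omega_j=V$. For each fixed $j$, compactness of $\overline{\Omega_{j+1}}$ together with the fact that the $U_\alpha$ form an increasing family of open sets with union $V$ ensures $\overline{\Omega_{j+1}}\subset U_\alpha$ for all $\alpha$ small enough, so that $u_\alpha$ is defined there. The $C^2$-convergence $g_\alpha\to g$ on $\overline{\Omega_{j+1}}$ then gives bounds, uniform in $\alpha$, on the coefficients of the operators $\Delta_{g_\alpha}$, a uniform lower bound on their ellipticity constants, and uniform $C^0$-bounds on $\scal_{g_\alpha}$. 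Writing the equation $L_{g_\alpha}u_\alpha=0$ as
\begin{displaymath}
\Delta_{g_\alpha}u_\alpha=-\frac{n-2}{4(n-1)}\scal_{g_\alpha}u_\alpha,
\end{displaymath}
the right-hand side is bounded in $L^\infty(\Omega_{j+1})$ uniformly in $\alpha$, by the assumed bound on $\|u_\alpha\|_{L^\infty(U_\alpha)}$. Interior $L^p$ estimates then bound $\|u_\alpha\|_{W^{2,p}(\Omega_j)}$ uniformly in $\alpha$ for every $p<\infty$; taking $p>n$ and applying the Sobolev embedding $W^{2,p}(\Omega_j)\hookrightarrow C^{1,1-n/p}(\overline{\Omega_j})$ one obtains a uniform bound on $u_\alpha$ in $C^{1,\gamma}(\overline{\Omega_j})$ for some $\gamma\in(0,1)$.

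Next, by Arzelà--Ascoli and a diagonal argument over $j$, after passing to a subsequence the functions $u_\alpha$ converge in $C^1(\overline{\Omega_j})$ for every $j$, hence in $C^1$ on every open subset of $V$ with compact closure, to a limit $u$; since each $u_\alpha$ is positive, $u\geq0$. Passing to the limit in the weak formulation
\begin{displaymath}
\int\Big(g_\alpha(\diff u_\alpha,\diff\varphi)+\frac{n-2}{4(n-1)}\scal_{g_\alpha}u_\alpha\varphi\Big)\,dv^{g_\alpha}=0,\qquad\varphi\in C^{\infty}_c(V),
\end{displaymath}
which is legitimate because $\diff u_\alpha\to\diff u$ locally uniformly and $g_\alpha\to g$, $\scal_{g_\alpha}\to\scal_g$, $dv^{g_\alpha}\to dv^g$ in $C^0$ on compact sets, shows that $u$ solves $L_gu=0$ weakly on $V$; by standard elliptic regularity (using the smoothness of $g$) $u\in C^2(V)$.

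Finally, for the last two displayed identities: given a compact $K\subset V$, pick $j$ with $K\subset\Omega_j$; then for $\alpha$ small $K\subset U_\alpha$ and $u_\alpha\to u$ uniformly on $K$, so $\|u_\alpha\|_{L^\infty(K)}\to\|u\|_{L^\infty(K)}$, and since $u_\alpha^r\,dv^{g_\alpha}\to u^r\,dv^g$ uniformly on $K$ for every $r\geq1$, the integrals converge as well. The only steps requiring genuine care are the uniformity in $\alpha$ of the constants in the interior elliptic estimates --- which is precisely where the $C^2$-convergence of the metrics is used --- and the bookkeeping of the varying domains $U_\alpha$, handled by working with the fixed exhaustion; everything else is routine.
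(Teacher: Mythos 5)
Your argument is correct and is essentially the standard interior elliptic compactness proof; the paper itself simply refers to the proof of Lemma 4.2 in \cite{ammann.dahl.humbert:13}, which proceeds along the same lines (uniform ellipticity from the $C^2$-convergence of the metrics, interior $L^p$ and Schauder-type estimates, Arzel\`a--Ascoli with a diagonal argument over an exhaustion, and passage to the limit in the equation). The only point to keep in mind is that the final $C^2$-regularity of $u$ uses regularity of the limit metric beyond $C^2$ (as you note parenthetically), which is harmless in the applications where the limit metric is flat.
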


\begin{proof}
see the proof of Lemma 4.2  
in \cite{ammann.dahl.humbert:13}.
\end{proof} 

\begin{lemma}
\label{lemma_limit_space_3}
Let $\xi^n$ be the flat metric on $\mR^n$ and assume 
that $u\in C^2(\mR^n)$, $u\geq 0$, $u\not\equiv 0$ 
satisfies 
\begin{displaymath}
L_{\xi^n}u=\mu u^{p-1}
\end{displaymath}
for some $\mu\in\mR$ and $p:=\frac{2n}{n-2}$. 
Assume in addition that $u\in L^p(\mR^n)$ and that 
\begin{displaymath}
\|u\|_{L^p(\mR^n)}\leq 1.
\end{displaymath}
Then $\mu\geq Y(S^n,\sigma^n)$. 
\end{lemma}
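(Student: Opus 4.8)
The plan is to reduce the lemma to the well-known identification of $Y(S^n,\sigma^n)$ with the best constant in the Sobolev inequality on $\mR^n$, combined with the cut-off formula \eqref{cut_off_formula}. First I would note that, since $\xi^n$ is flat, $L_{\xi^n}=\Delta_{\xi^n}$, so the hypothesis becomes $\Delta_{\xi^n}u=\mu u^{p-1}$ with $u\geq0$, $u\not\equiv0$, $u\in C^2(\mR^n)\cap L^p(\mR^n)$ and $\|u\|_{L^p(\mR^n)}\leq1$. The external input is Aubin's computation (see \cite{aubin:98} or \cite{lee.parker:87}) that
\begin{displaymath}
\int_{\mR^n}|d v|^2\,dx\ \geq\ Y(S^n,\sigma^n)\Big(\int_{\mR^n}|v|^p\,dx\Big)^{2/p}
\end{displaymath}
for every $v\in C_c^{\infty}(\mR^n)$, hence, by density, for every compactly supported Lipschitz function.

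The main obstacle is that $u$ is a priori neither compactly supported nor in $H^{1,2}(\mR^n)$, so it cannot be used directly as a test function; this is the only point requiring care. I would fix, for each $R>0$, a smooth function $\chi_R\colon\mR^n\to[0,1]$ with $\chi_R\equiv1$ on $B(0,R)$, $\supp(\chi_R)\subset B(0,2R)$ and $|d\chi_R|\leq2/R$, and apply \eqref{cut_off_formula} with $M=\mR^n$ together with $\Delta_{\xi^n}u=\mu u^{p-1}$ to obtain
\begin{displaymath}
\int_{\mR^n}|d(\chi_R u)|^2\,dx=\int_{\mR^n}u^2|d\chi_R|^2\,dx+\mu\int_{\mR^n}\chi_R^2u^p\,dx .
\end{displaymath}
By H\"older's inequality with exponents $p/2$ and $p/(p-2)$, using $\vol(B(0,2R)\setminus B(0,R))\leq CR^n$ and $(p-2)/p=2/n$, the first term on the right is bounded by $C'\big(\int_{B(0,2R)\setminus B(0,R)}u^p\,dx\big)^{2/p}$, which tends to $0$ as $R\to\infty$ since $u\in L^p(\mR^n)$.

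Applying the Sobolev inequality above to $\chi_R u$, letting $R\to\infty$, and using monotone convergence for $\int\chi_R^2u^p$ and for $\int(\chi_R u)^p$, I would then arrive at
\begin{displaymath}
Y(S^n,\sigma^n)\Big(\int_{\mR^n}u^p\,dx\Big)^{2/p}\ \leq\ \mu\int_{\mR^n}u^p\,dx .
\end{displaymath}
Since $u\not\equiv0$ one may divide by $\int_{\mR^n}u^p\,dx>0$ to get $\mu\geq Y(S^n,\sigma^n)\big(\int_{\mR^n}u^p\,dx\big)^{(2-p)/p}$; in particular $\mu>0$ because $Y(S^n,\sigma^n)>0$. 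Finally, since $p>2$ the exponent $(2-p)/p$ is negative, and since $0<\int_{\mR^n}u^p\,dx\leq1$ we have $\big(\int_{\mR^n}u^p\,dx\big)^{(2-p)/p}\geq1$, whence $\mu\geq Y(S^n,\sigma^n)$. It is worth noting that the normalization $\|u\|_{L^p(\mR^n)}\leq 1$ enters precisely at this last step; everything else is the standard Sobolev-to-Yamabe dictionary.
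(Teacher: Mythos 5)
Your proof is correct: the reduction $L_{\xi^n}=\Delta_{\xi^n}$, the cut-off formula with the annulus estimate killing the gradient-of-cutoff term, the sharp Euclidean Sobolev inequality with constant $Y(S^n,\sigma^n)$, and the final use of $\|u\|_{L^p}\leq 1$ together with the negative exponent $(2-p)/p$ all fit together without gaps. The paper itself gives no argument here but simply refers to Lemma 4.3 of \cite{ammann.dahl.humbert:13}, and your cut-off-plus-sharp-Sobolev argument is essentially the same standard proof as in that reference, so nothing further is needed.
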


\begin{proof}
see the proof of Lemma 4.3 in 
\cite{ammann.dahl.humbert:13}. 
\end{proof}

\subsection{$L^2$-estimates on $WS$-bundles}
\label{section_ws_bundles}

\begin{definition}
Let $n\geq 1$ and $k\in\{0,...,n-3\}$ be integers. 
Let $W$ be a closed manifold of dimension $k$ 
and let $I$ be an interval. 
A $WS$-bundle is a product 
$P:=I\times W\times S^{n-k-1}$ equipped with a 
metric of the form 
\begin{equation}
\label{g_WS}
g_{\mathrm{WS}}=dt^2+e^{2\varphi(t)}h_t
+\sigma^{n-k-1}
\end{equation}
where $h_t$ is a smooth family of metrics on $W$ 
depending on $t\in I$ and $\varphi$ is a function 
on $I$. 
\end{definition}
We denote by $\pi$: $P\to I$ the projection onto 
the first factor and for every $t\in I$ 
we write $F_t:=\pi^{-1}(t)$. 
Furthermore we define 
\begin{displaymath}
e(h_t):=\frac{1}{2(n-1)}
\mathrm{tr}_{h_t}(\partial_t h_t).
\end{displaymath}
\begin{definition}
We say that condition $(A_t)$ holds at $t\in I$, 
if the following assumptions are true:
\begin{enumerate}[1.]
\item $s\mapsto h_s$ is constant on 
an open neighborhood of $t$,
\item $e^{-2\varphi(t)}\inf_{x\in W}\scal_{h_t}(x)
\geq-\frac{(n-k-2)(n-1)}{8(n-2)}$,
\item $|\varphi'(t)|\leq 1$
\item $0\leq -2k\varphi''(t)\leq
\frac{1}{2}(n-1)(n-k-2)^2$.
\end{enumerate}
We say that condition $(B_t)$ holds at $t\in I$, 
if the following assumptions are true:
\begin{enumerate}[1.]
\item $s\mapsto\varphi(s)$ is constant on 
an open neighborhood of $t$,
\item $\inf_{x\in F_t}\scal_{g_{\mathrm{WS}}}(x)
\geq\frac{1}{2}\scal_{\sigma^{n-k-1}}
=\frac{1}{2}(n-k-1)(n-k-2)$,
\item $\frac{(n-1)^2}{2}e(h_t)^2
+\frac{n-1}{2}\partial_t e(h_t)
\geq-\frac{3}{64}(n-k-2)$.
\end{enumerate}
\end{definition}
Let $P$ be a $WS$-bundle and let $G$ be a 
Riemannian metric on $P$ which is close 
to $g_{\mathrm{WS}}$ in a sense we will make 
precise later. 
Assume that $u$ satisfies the equation 
\begin{equation}
\label{LGu=0}
L_{G}u=0.
\end{equation}
Our aim is to estimate the distribution of 
$L^2$-norm of $u$ with respect to the metric 
$g_{\mathrm{WS}}$. 
If we rewrite the equation (\ref{LGu=0}) 
in terms of the metric $g_{\mathrm{WS}}$ 
we obtain an equation of the form 
\begin{equation}
\label{perturbed_LGu=0}
L_{g_{\mathrm{WS}}}u
=d^*A(du)+Xu+\epsilon\partial_t u -su,
\end{equation}
where $s,\epsilon\in C^{\infty}(P)$, 
$A\in\Gamma(\mathrm{End}(T^*P))$ and 
$X\in\Gamma(TP)$ and where $dt(X)=0$ 
and $A(dt)=0$ and $A$ is symmetric. 
Then the following theorem holds. 

\begin{theorem}
\label{theorem_ws_bundle}
Assume that $P$ is equipped with a metric 
$g_{\mathrm{WS}}$ of the form (\ref{g_WS}). 
Let $\alpha,\beta\in\mR$ such that 
$[\alpha,\beta]\subset I$. 
Assume that for every $t\in I$ condition $(A_t)$ 
or condition $(B_t)$ holds. 
Assume that $u$ is a positive solution 
of (\ref{perturbed_LGu=0}). 
Then there exists $c_0>0$ independent of 
$\alpha,\beta$ and $\varphi$ such that if 
\begin{displaymath}
\|A\|_{L^{\infty}(P)},
\|X\|_{L^{\infty}(P)},
\|s\|_{L^{\infty}(P)},
\|\epsilon\|_{L^{\infty}(P)},
\|e(h_t)\|_{L^{\infty}(P)}
\leq c_0,
\end{displaymath}
then 
\begin{displaymath}
\int_{\pi^{-1}((\alpha+\gamma,\beta-\gamma))}
u^2\,dv^{g_{\mathrm{WS}}}
\leq 
\frac{4(\vol^{g_\alpha}(F_{\alpha})
+\vol^{g_\beta}(F^{\beta}))}{n-k-2}
\|u\|_{L^{\infty}(\pi^{-1}(\alpha,\beta))}^2,
\end{displaymath}
where $\gamma:=\frac{\sqrt{32}}{n-k-2}$. 
\end{theorem}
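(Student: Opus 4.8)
The plan is to reduce Theorem~\ref{theorem_ws_bundle} to a one--dimensional second--order differential inequality for the fiberwise $L^2$--mass of $u$ and then close it with an elementary comparison argument, in the spirit of Section~6 of \cite{ammann.dahl.humbert:13}. For $t\in I$ write $F_t=\pi^{-1}(t)$ with the metric induced by $g_{\mathrm{WS}}$ and set
\begin{displaymath}
\Theta(t):=\int_{F_t}u^2\,dv^{g_{\mathrm{WS}}}.
\end{displaymath}
Then $\int_{\pi^{-1}((\alpha+\gamma,\beta-\gamma))}u^2\,dv^{g_{\mathrm{WS}}}=\int_{\alpha+\gamma}^{\beta-\gamma}\Theta(t)\,dt$, and $\Theta(\alpha)\le\vol^{g_\alpha}(F_\alpha)\,\|u\|_{L^{\infty}(\pi^{-1}(\alpha,\beta))}^2$ with the analogous bound at $\beta$; so it suffices to estimate $\int_{\alpha+\gamma}^{\beta-\gamma}\Theta$ by $\frac{4}{n-k-2}\big(\Theta(\alpha)+\Theta(\beta)\big)$.

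First I would compute $\Theta'$ and $\Theta''$. One differentiation produces $2\int_{F_t}u\,\partial_tu$ together with a term coming from the $t$--dependence of the fiber volume element, governed by $k\varphi'(t)$ and $\frac12\mathrm{tr}_{h_t}(\partial_th_t)=(n-1)e(h_t)$. Differentiating again and using (\ref{perturbed_LGu=0}) to substitute for $\partial_t^2u$ --- writing $L_{g_{\mathrm{WS}}}u$ in the adapted coordinates as $-\partial_t^2u-(\partial_t\log\rho_t)\partial_tu+\Delta_{F_t}u+\frac{n-2}{4(n-1)}\scal_{g_{\mathrm{WS}}}u$ with $\rho_t$ the fiber density, and integrating by parts on $F_t$ so that $\int_{F_t}u\,\Delta_{F_t}u=\int_{F_t}|\nabla^{F_t}u|^2\ge0$ --- leads to
\begin{displaymath}
\Theta''(t)\ge\frac{n-2}{2(n-1)}\int_{F_t}\scal_{g_{\mathrm{WS}}}\,u^2\,dv^{g_{\mathrm{WS}}}-\mathrm{Err}(t),
\end{displaymath}
where $\mathrm{Err}(t)$ collects all terms carrying a factor $\varphi'$, $e(h_t)$, $A$, $X$, $s$ or $\epsilon$. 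This is where conditions $(A_t)$ and $(B_t)$ enter: each of them, together with the contribution $\scal_{\sigma^{n-k-1}}=(n-k-1)(n-k-2)$ of the round--sphere factor, is precisely designed to make $\scal_{g_{\mathrm{WS}}}$ bounded below on $F_t$ by a positive multiple of $(n-k-2)^2$. Setting $c:=\frac{n-k-2}{4}$ (up to the exact constant) and using the smallness hypotheses $\|A\|_{L^{\infty}},\|X\|_{L^{\infty}},\|s\|_{L^{\infty}},\|\epsilon\|_{L^{\infty}},\|e(h_t)\|_{L^{\infty}}\le c_0$ and $|\varphi'|\le1$ (from $(A_t)$) to absorb $\mathrm{Err}(t)$ into a fraction of $c^2\Theta(t)$, one obtains $\Theta''\ge c^2\Theta$ on $I$.

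The second step is purely one--dimensional. Comparing $\Theta$ with the solution $w$ of $w''=c^2w$ on $[\alpha,\beta]$ matching the boundary values $w(\alpha)=\Theta(\alpha)$, $w(\beta)=\Theta(\beta)$, the maximum principle applied to $\Theta-w$ gives $\Theta\le w$, hence $\Theta(t)\le\Theta(\alpha)e^{-c(t-\alpha)}+\Theta(\beta)e^{-c(\beta-t)}$; integrating over $(\alpha+\gamma,\beta-\gamma)$ and inserting the bounds on $\Theta(\alpha),\Theta(\beta)$ yields the conclusion with constant $\frac1c=\frac{4}{n-k-2}$. The collar $\gamma=\frac{\sqrt{32}}{n-k-2}$ is the room this auxiliary ODE lemma needs: absorbing $\mathrm{Err}(t)$ may require estimating $\sup_{F_t}u$ in terms of $\Theta$ on a slightly larger slice (via interior elliptic estimates for $L_{g_{\mathrm{WS}}}u=0$), so the differential inequality, and hence the bound, is only asserted on the interior of $[\alpha,\beta]$; the precise form is the ODE lemma of \cite{ammann.dahl.humbert:13}.

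I expect the main obstacle to be the first step: carrying out the curvature computation for the warped--product--type metric $g_{\mathrm{WS}}$, checking that each of $(A_t)$ and $(B_t)$ really produces the lower bound on $\scal_{g_{\mathrm{WS}}}$, and --- most delicately --- controlling $\mathrm{Err}(t)$ uniformly so that it is genuinely swallowed by $c^2\Theta(t)$ once $c_0$ is chosen small enough, which is where the interplay between the smallness of the perturbation, the bound $\|e(h_t)\|_{L^{\infty}}\le c_0$ and the choice of $\gamma$ really matters. The one--dimensional comparison step is routine by contrast.
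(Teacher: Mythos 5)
The paper does not actually prove Theorem \ref{theorem_ws_bundle}: it invokes Theorem 5.2 of \cite{ammann.dahl.humbert:13} verbatim (with a one-line remark about replacing $\|u\|_{L^{\infty}(P)}$ by $\|u\|_{L^{\infty}(\pi^{-1}(\alpha,\beta))}$), precisely because the proof there is long and delicate. Your plan — study $\Theta(t):=\int_{F_t}u^2$, prove $\Theta''\geq c^2\Theta$ with $c\sim\frac{n-k-2}{4}$, then compare with the ODE — is in the spirit of that reference, but its core step is not justified, and the justification you offer is wrong. Concretely, your claim that conditions $(A_t)$ and $(B_t)$ are ``precisely designed to make $\scal_{g_{\mathrm{WS}}}$ bounded below on $F_t$ by a positive multiple of $(n-k-2)^2$'' is true for $(B_t)$ (it is literally hypothesis 2 there) but false for $(A_t)$: on a region where $h_t$ is $t$-constant one has $\scal_{g_{\mathrm{WS}}}=e^{-2\varphi}\scal_{h_t}+(n-k-1)(n-k-2)-2k\varphi''-k(k+1)(\varphi')^2$, and $(A_t)$ only requires $|\varphi'|\leq 1$, so for $k=n-3$ (the critical case in the surgery construction, where $|f'|=1$ on the cusp region) the term $-k(k+1)(\varphi')^2\approx-(n-3)(n-2)$ overwhelms the sphere contribution $2$, and $\scal_{g_{\mathrm{WS}}}$ is very negative. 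Hence no pointwise curvature positivity is available where $(A_t)$ holds, and $\Theta''\geq c^2\Theta$ cannot be read off the way you describe.

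The only possible rescue is the one you do not carry out: the negative curvature terms must cancel against the $t$-derivatives of the fiber density $e^{k\varphi}$ (which contribute $(k\varphi''+(k\varphi')^2)\Theta$ to $\Theta''$) and against the cross term $2k\varphi'\int_{F_t}u\,\partial_tu$, and here the numerology of $(A_t)$ is extremely tight. For instance, after the curvature and density contributions are combined, the net $\varphi''$ term is $\frac{k\varphi''}{n-1}\Theta$, which $(A_t)$(4) only bounds below by $-\frac{(n-k-2)^2}{4}\Theta$, i.e.\ by $-4c^2\Theta$ for your $c$; and the Cauchy--Schwarz absorption of the cross term into $2\int_{F_t}(\partial_tu)^2$ leaves a deficit of order $-\frac{k(n-k-2)}{2(n-1)}(\varphi')^2\Theta$, which together with the $-\frac{n-k-2}{16}\Theta$ coming from $(A_t)$(2) already destroys the inequality when $n-k\in\{3,4\}$. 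None of this can be repaired by the smallness parameter $c_0$, since $|\varphi'|$ and $k$ are not small ($c_0$ only controls $A,X,s,\epsilon,e(h_t)$), nor by your vague appeal to interior elliptic estimates for $\sup_{F_t}u$; and your sketch likewise does not explain where the collar $\gamma=\frac{\sqrt{32}}{n-k-2}$ and the interior $L^{\infty}$-norm genuinely enter (a simple endpoint comparison would not need either). So as it stands the proposal has a genuine gap exactly at the step that makes the theorem hard; either carry out the full weighted/fiberwise analysis in the $(A_t)$-regions as in \cite{ammann.dahl.humbert:13}, or do as the paper does and quote Theorem 5.2 there.
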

Note that the assertion is non-trivial only if 
$\beta-\alpha>2\gamma$. 

\begin{proof}
This is a special case of Theorem 5.2 
in \cite{ammann.dahl.humbert:13}. 
Since the proof given there is very long 
and technical we will not repeat it here. 
Note that the theorem 
in \cite{ammann.dahl.humbert:13} is stated with 
$\|u\|_{L^{\infty}(P)}$ on the right hand side 
of the asserted estimate. 
However if we examine the end of the proof 
of Theorem 5.2 
in \cite{ammann.dahl.humbert:13} we observe 
that we may also put 
$\|u\|_{L^{\infty}(\pi^{-1}(\alpha,\beta))}$ 
as we have done. 
\end{proof}

\subsection{Proof of Theorem \ref{theorem_preservation_mass}}
\label{section_proof_preservation_mass}

Let $(M,g)$ be a closed 
Riemannian manifold of dimension $n\geq3$ 
with positive Yamabe constant $Y(M,g)$. 
Assume that $g$ is flat on an open 
neighborhood $U$ of a point $p\in M$. 
Then we can define the mass $m(M,g)$ at $p$. 

Let $N$ be obtained from $M$ by a surgery 
of dimension $k\in\{0,...,n-3\}$ which does not 
hit the point $p$. 
More precisely we apply the construction described 
in Section \ref{section_definition_metrics} with 
$M_1:=M$, $g_1:=g$, $M_2:=S^n$, $g_2:=\sigma^n$, 
$W:=S^k$ such that the 
embedding $S^k\to S^n$ is the standard embedding 
and such that $p$ is not contained in the image of the 
embedding $S^k\to M$. 
Moreover we choose the number $R_{\max}>0$ and the 
open neighborhood $U$ of $p$ in such a way that 
$U\cap U^{M}(R_{\max})=\emptyset$. 
Then for all $\theta$ which are small enough 
we obtain a manifold $N:=N_{\epsilon}$ with a 
Riemannian metric 
$g_{\theta}$ as described in Section 
\ref{section_definition_metrics}. 
In particular $g_{\theta}$ coincides 
with $g$ on $U$. 
By Theorem 6.1 in the article 
\cite{ammann.dahl.humbert:13} 
and by the fact that 
\begin{displaymath}
Y(M\amalg S^n,g\amalg \sigma^n)
=Y(M,g)
\end{displaymath}
(see e.\,g.\,Section 1.2 in 
\cite{ammann.dahl.humbert:13}) 
we know that there exist positive 
constants $\Lambda_{n,k}$ depending only on $n$ 
and $k$, 
such that 
\begin{displaymath}
\min\{Y(M,g),\Lambda_{n,k}\}
\leq\liminf_{\theta\searrow 0}Y(N,g_{\theta})
\leq\limsup_{\theta\searrow 0}Y(N,g_{\theta})
\leq Y(M,g).
\end{displaymath}
Thus if $\theta$ is small enough we have 
$Y(N,g_{\theta})>0$ 
and thus we can define the mass $m(N,g_{\theta})$ 
at $p$.

We recall that by Theorem \ref{maintheorem} we have 
\begin{displaymath}
-m(M,g)
=\inf 
\{J_g(u)|\,u\in C^{\infty}(M)\},
\end{displaymath} 
where for every $u\in C^{\infty}(M)$ 
\begin{displaymath}
J_g(u)
=\int_{M\setminus\{p\}}
\eta r^{2-n}F_{\eta}\,dv^g
+2\int_{M}
uF_{\eta}\,dv^g
+\int_{M}uL_g u\,dv^g
\end{displaymath} 
and where $\eta$ and $F_{\eta}$ are defined 
as in Section \ref{variational}. 
For $m(N,g_{\theta})$ we have an analogous 
formula with a functional denoted 
by $J_{g_{\theta}}$. 
We note that the functions $\eta$ and $F_{\eta}$ 
can be chosen independently of $\theta$ since 
we have $g=g_{\theta}$ for all $\theta$ 
on $\supp(\eta)$. 


The proof of Theorem \ref{theorem_preservation_mass} 
is divided into several steps.\\

\noindent 
\textbf{Step 1:} 
After passing to a subsequence we have 
\begin{displaymath}
\lim_{\theta\to 0} m(N,g_{\theta})\geq m(M,g). 
\end{displaymath}

The proof is analogous to the proof of Theorem 
\ref{theorem_surgery_positivity} and we do not 
repeat it here. 


We choose $\delta>0$ such that 
$B(p,2\delta)\subset U$ and we choose 
a smooth function $\eta$ on $N_{\epsilon}$ 
such that $\eta\equiv\frac{1}{(n-2)\omega_{n-1}}$ 
on $B(p,\delta)$, 
$\eta\equiv 0$ on 
$N_{\epsilon}\setminus B(p,2\delta)$ and 
$|\diff\eta|_g\leq\frac{2}{\delta}$ on 
$N_{\epsilon}$. 
For every $\theta$ we denote the Green function 
for $L_{g_{\theta}}$ at $p$ by $G_{\theta}$. 
Then the function $u_{\theta}$: 
$N_{\epsilon}\to\mR$, 
\begin{displaymath}
u_{\theta}(x):=
\left\{
\begin{array}{ll}
G_{\theta}(x)-\eta(x) r(x)^{2-n}, & x\neq p\\
m(N,g_{\theta}), & x=p
\end{array}
\right. 
\end{displaymath}
is smooth. 
For every $\alpha>0$ which is small enough we set 
\begin{displaymath}
A_{\alpha}:=
U^M(2\alpha)\setminus U^M(\alpha)
\subset M.
\end{displaymath} 

\noindent 
\textbf{Step 2:} 
We prove that 
for all $\alpha,\theta$ with 
$0<\theta<\alpha< R_0$ we have 
\begin{displaymath}
-m(M,g)\leq 
-m(N,g_{\theta})
+16\int_{A_{\alpha}} u_{\theta}^2\,dv^{g_{\theta}}.
\end{displaymath}

For every $\alpha$ which is small enough 
let $\chi_{\alpha}$: $M\to[0,1]$ 
be a smooth function such that 
$\chi_{\alpha}\equiv 1$ 
on $M\setminus U^M(2\alpha)$, 
$\chi_{\alpha}\equiv 0$ on 
$U^M(\alpha)$ 
and $|\diff\chi_{\alpha}|_g\leq\frac{2}{\alpha}$. 
In particular for all $\alpha$ 
we have $\chi_{\alpha}\equiv 1$ on $U$. 
Furthermore if $\theta<\alpha$, then we have 
$g_{\theta}=F^2g$ on $\supp(\chi_{\alpha})$. 
If in addition $\alpha\in(0,R_0)$, then 
we obtain for all $\theta\in(0,\alpha)$ 
\begin{equation}
\label{d_chi_alpha}
|\diff\chi_{\alpha}|_{g_{\theta}}
=F^{-1}|\diff\chi_{\alpha}|_g
=r|\diff\chi_{\alpha}|_g
\leq 2\alpha\frac{2}{\alpha}=4.
\end{equation}
For $0<\theta<\alpha<R_0$ the function 
$v_{\alpha,\theta}$: $M\to\mR$ defined by 
\begin{displaymath}
v_{\alpha,\theta}(x):=
\left\{
\begin{array}{ll}
F^{\frac{n-2}{2}}\chi_{\alpha} u_{\theta},&
\textrm{if }x\in M\setminus W_1'\\
0,& \textrm{if }x\in W_1'
\end{array} 
\right. 
\end{displaymath}
is smooth. 
By Theorem \ref{maintheorem} we have 
\begin{align*}
-m(M,g)
&\leq J_g(v_{\alpha,\theta})\\
&=\int_{M\setminus\{p\}}
\eta r^{2-n}F_{\eta}\,dv^g
+2\int_{M}
F^{\frac{n-2}{2}}\chi_{\alpha} u_{\theta}F_{\eta}\,
dv^g\\
&{}+\int_{M}F^{\frac{n-2}{2}}\chi_{\alpha} u_{\theta}
L_g(F^{\frac{n-2}{2}}\chi_{\alpha} u_{\theta})\,dv^g.
\end{align*}
Since on $\supp(\chi_{\alpha})$ we have 
$g_{\theta}=F^2g$ 
it follows from the conformal transformation property 
(\ref{L_g_conform}) of $L_g$ that 
\begin{displaymath}
L_g(F^{\frac{n-2}{2}}\chi_{\alpha} u_{\theta})=
F^{\frac{n+2}{2}}
L_{g_{\theta}}
(\chi_{\alpha} u_{\theta}). 
\end{displaymath}
Since on $\supp(\chi_{\alpha})$ we have 
$dv^g=F^{-n}dv^{g_{\theta}}$ we obtain 
\begin{displaymath}
\int_{M}
F^{\frac{n-2}{2}}\chi_{\alpha} u_{\theta}
L_g(F^{\frac{n-2}{2}}\chi_{\alpha} u_{\theta})\,
dv^g
=\int_{M} \chi_{\alpha} u_{\theta} 
L_{g_{\theta}} (\chi_{\alpha} u_{\theta})\, 
dv^{g_{\theta}}. 
\end{displaymath}
Now by (\ref{cut_off_formula}) we have 
\begin{displaymath}
\int_{M} \chi_{\alpha} u_{\theta} 
\Delta_{g_{\theta}} 
(\chi_{\alpha} u_{\theta})\,dv^{g_{\theta}}
=\int_{M} (u_{\theta}^2
|\diff\chi_{\alpha}|_{g_{\theta}}^2
+\chi_{\alpha}^2 
u_{\theta}\Delta_{g_{\theta}} u_{\theta})\,
dv^{g_{\theta}}.
\end{displaymath}
Using that 
on $\supp(F_{\eta})$ we have $F\equiv 1$, 
$\chi_{\alpha}\equiv 1$ 
and $g_{\theta}=g$ we obtain 
\begin{align*}
-m(M,g)&\leq
\int_{M\setminus\{p\}}
\eta r^{2-n}F_{\eta}\,dv^{g_{\theta}}
+2\int_{M}
u_{\theta}F_{\eta}\,dv^{g_{\theta}}\\
&{}
+\int_{M}
u_{\theta}^2
|\diff\chi_{\alpha}|_{g_{\theta}}^2\,
dv^{g_{\theta}}
+\int_{M}
\chi_{\alpha}^2 u_{\theta}
L_{g_{\theta}} u_{\theta}
\,dv^{g_{\theta}}.
\end{align*}
Using that $L_{g_{\theta}}u_{\theta}=-F_{\eta}$ 
and $\chi_{\alpha}\equiv 1$ 
on $\supp(F_{\eta})$ and using that 
$\supp(F_{\eta})\subset M$, we obtain 
\begin{align*}
-m(M,g)&\leq
\int_{N_{\epsilon}\setminus\{p\}}
\eta r^{2-n}F_{\eta}\,dv^{g_{\theta}}
+2\int_{N_{\epsilon}}
u_{\theta}F_{\eta}\,dv^{g_{\theta}}\\
&{}
+\int_{M}
u_{\theta}^2
|\diff\chi_{\alpha}|_{g_{\theta}}^2\,
dv^{g_{\theta}}
+\int_{N_{\epsilon}}
u_{\theta}
L_{g_{\theta}} u_{\theta}
\,dv^{g_{\theta}}\\
&=
J_{g_{\theta}}(u_{\theta})
+\int_{M} u_{\theta}^2
|\diff\chi_{\alpha}|_{g_{\theta}}^2\,
dv^{g_{\theta}}
\end{align*}
Using 
(\ref{d_chi_alpha}) and that 
$\supp(d\chi_\alpha)\subset A_{\alpha}$ 
we obtain 
\begin{displaymath}
-m(M,g)
\leq J_{g_{\theta}}(u_{\theta})
+16\int_{A_{\alpha}} u_{\theta}^2\,dv^{g_{\theta}}
\end{displaymath}
By Theorem \ref{maintheorem} we have 
$J_{g_{\theta}}(u_{\theta})
=-m(N,g_{\theta})$ and therefore the assertion 
of Step 2 follows.

In the remainder of the proof we will show
that the integral on the right 
hand side tends to $0$ as $\alpha$ and $\theta$ 
tend to $0$. 
By definition of $u_{\theta}$ we have 
$L_{g_{\theta}}u_{\theta}=-F_{\eta}$ 
for all $\theta$, where $F_{\eta}$ 
is defined as in Section \ref{variational}. 
In particular there exists $b>0$ such that 
for all $\theta$ we have 
\begin{displaymath}
U^N_{\epsilon}(b)\cap
\supp(L_{g_{\theta}}u_{\theta})
=\emptyset.
\end{displaymath}
In the following step we obtain an $L^2$-estimate 
for the functions $u_{\theta}$ which is independent 
of $\theta$. 
The result is not trivial since 
$\vol^{g_{\theta}}(U^{N}_{\epsilon}(b))
\to\infty$ as $\theta\to 0$.\\

\noindent 
\textbf{Step 3:} 
We prove that 
there exist $a\in(0,b)$ and $D>0$ such that 
for every $\theta$ we have 
\begin{displaymath}
\int_{U^N_{\epsilon}(a)} 
u_{\theta}^2\,dv^{g_{\theta}}
\leq 
D\Big(\max_{U^N_{\epsilon}(b)}u_{\theta}\Big)^2.
\end{displaymath}

This inequality is a special case of Lemma 6.6 in 
the article \cite{ammann.dahl.humbert:13} and we 
follow the proof given there. 
Let $\tilde{r}\in(\epsilon,b)$ be fixed. 
The manifold $P:=U^{N}_{\epsilon}(\tilde{r})$ 
with the metric $g_{\theta}'$ 
defined in (\ref{g_theta_primed}) is a 
$WS$-bundle, where in the notation of 
Section \ref{section_ws_bundles} we have 
$I=(\alpha,\beta)$ with 
$\alpha:=-\ln\tilde{r}+\ln\epsilon$ 
and $\beta:=\ln\tilde{r}-\ln\epsilon$. 
The metric $g_{\theta}'$ has exactly the form 
(\ref{g_WS}) with $\varphi=f$ and 
$h_t=\tilde{h}_t$. 
Let $\theta$ be small enough and let 
\begin{displaymath}
t\in(-\ln\tilde{r}+\ln\epsilon,
-\ln\delta_0+\ln\epsilon)\cup 
(\ln\delta_0-\ln\epsilon,
\ln\tilde{r}-\ln\epsilon).
\end{displaymath}
Then assumption $(A_t)$ from Section 
\ref{section_ws_bundles} is true. 
Let again $\theta$ be small enough and let 
\begin{displaymath}
t\in(-\ln\delta_0+\ln\epsilon,
\ln\delta_0-\ln\epsilon).
\end{displaymath}
Then we have 
$\scal_{g_{\theta}'}=\scal_{\sigma^{n-k-1}}
+O(1/A_{\theta})$ and the error term 
$e(\tilde{h}_t)$ from condition $(B_t)$ 
satisfies 
\begin{displaymath}
2(n-1)|e(\tilde{h}_t)|
\leq\Big|\mathrm{tr}_{\tilde{h}_t}
\partial_t\tilde{h}_t\Big|
=\Big|\mathrm{tr}_{\tilde{h}_t}
\Big(\chi'(t/A_{\theta})\frac{h_2-h_1}{A_{\theta}}
\Big)
\Big|
\leq\frac{C}{A_{\theta}}
\end{displaymath}
and 
\begin{displaymath}
2(n-1)|\partial_t e(\tilde{h}_t)|
=\big|\mathrm{tr}
\big(\tilde{h}_t^{-1}(\partial_t\tilde{h}_t)
\tilde{h}_t^{-1}(\partial_t\tilde{h}_t)
\big)
\big|
+\big|\mathrm{tr}_{\tilde{h}_t}
\partial_t^2\tilde{h}_t
\big|
\leq\frac{C}{A_{\theta}^2}.
\end{displaymath}
Because of $1/A_{\theta}\leq\theta$ 
the assumption $(B_t)$ from Section 
\ref{section_ws_bundles} is true. 
Now on $P$ we have $L_{g_{\theta}}u_{\theta}=0$ 
and with respect to the metric 
$g_{\mathrm{WS}}:=g_{\theta}'$ this equation 
has the form (\ref{perturbed_LGu=0}) 
as argued in Section \ref{section_ws_bundles}. 
Using (\ref{T_estimate_1}), (\ref{T_estimate_2}), 
(\ref{T_estimate_3}) one verifies that the 
error terms satisfy the pointwise estimates 
\begin{displaymath}
|A(x)|_{g_{\mathrm{WS}}},
|X(x)|_{g_{\mathrm{WS}}},
|s(x)|_{g_{\mathrm{WS}}},
|\epsilon(x)|_{g_{\mathrm{WS}}}
\leq Ce^{-f(t)}
\end{displaymath}
on $U^{N}_{\epsilon}(R_0)$, 
where $C>0$ is independent of $\theta$. 
In particular for every $c_0>0$ we obtain 
\begin{displaymath}
|A(x)|_{g_{\mathrm{WS}}},
|X(x)|_{g_{\mathrm{WS}}},
|s(x)|_{g_{\mathrm{WS}}},
|\epsilon(x)|_{g_{\mathrm{WS}}}
\leq c_0
\end{displaymath}
on $U^{N}_{\epsilon}(\theta)$ if $\theta$ 
is small enough. 
We set 
\begin{displaymath}
\alpha:=-\ln\tilde{r}+\ln\epsilon,\quad 
\beta:=\ln\tilde{r}-\ln\epsilon. 
\end{displaymath}
If $\tilde{r}$ is so small that 
$\beta-\alpha>2\gamma=\frac{8\sqrt{2}}{n-k-2}$, 
then with 
$P':=U^{N}_{\epsilon}(\tilde{r}e^{-\gamma})$ 
we obtain by Theorem \ref{theorem_ws_bundle} that 
\begin{displaymath}
\int_{P'}u_{\theta}^2\,dv^{g_{\mathrm{WS}}}
\leq C 
\|u_{\theta}\|_{L^{\infty}(\pi^{-1}(\alpha,\beta))}^2,
\end{displaymath}
where
\begin{displaymath}
C=\frac{4}{n-k-2}(\vol^{g_{\alpha}}(F^{\alpha})
+\vol^{g_{\beta}}(F^{\beta}))
\end{displaymath}
is independent of $\theta$. 
Furthermore if $\tilde{r}$ is small enough 
we have 
\begin{displaymath}
dv^{g_{\theta}}\leq 2dv^{g_{\mathrm{WS}}}
\end{displaymath}
on $P'$ and therefore 
\begin{displaymath}
\int_{P'}u_{\theta}^2\,dv^{g_{\theta}}
\leq 2 C 
\|u_{\theta}\|_{L^{\infty}(\pi^{-1}(\alpha,\beta))}^2.
\end{displaymath}
Thus with $a:=\tilde{r}e^{-\gamma}$ the 
assertion of Step 3 follows since the functions 
$u_{\theta}$ are positive on $U^N_{\epsilon}(b)$.\\

\noindent 
\textbf{Step 4:}
We prove that there exists $C_1>0$ such that 
for all $\theta$ we have 
\begin{displaymath}
\int_{N}u_{\theta}^p\,dv^{g_{\theta}}\leq C_1,
\end{displaymath}
where $p:=\frac{2n}{n-2}$. 

By Theorem 6.1 in the article 
\cite{ammann.dahl.humbert:13} there exists 
a positive constant $\Lambda_{n,k}$ depending 
only on $n$ and $k$ such that we have 
\begin{displaymath}
C_0:=\min\{Y(M,g),\Lambda_{n,k}\}
\leq\liminf_{\theta\to 0}Y(N,g_{\theta})
\end{displaymath}
where $C_0>0$. 
Let $q\in\mR$ such that 
$\frac{1}{p}+\frac{1}{q}=1$. 
By definition of $Y(N,g_{\theta})$ and by 
H\"older's inequality we obtain 
for all sufficiently small $\theta$ 
\begin{displaymath}
\frac{C_0}{2}\leq 
\frac{\int_{N}u_{\theta}L_{g_{\theta}}u_{\theta}\,
dv^{g_{\theta}}}
{(\int_{N}u_{\theta}^p\,dv^{g_{\theta}})^{2/p}}
=-\frac{\int_{N}u_{\theta}F_{\eta}\,dv^{g_{\theta}}}
{(\int_{N}u_{\theta}^p\,dv^{g_{\theta}})^{2/p}}
\leq\frac{(\int_{N}F_{\eta}^q\,dv^{g_{\theta}})
^{1/q}}
{(\int_{N}u_{\theta}^p\,dv^{g_{\theta}})^{1/p}}.
\end{displaymath}
On $\supp(F_{\eta})$ we have $g_{\theta}=g$ and 
thus the numerator on the right hand side is 
independent of $\theta$. 
The assertion of Step 4 follows.\\

\noindent 
\textbf{Step 5:}
We prove that there exists $C_2>0$ such that 
for all $\theta$ we have 
\begin{displaymath}
\max_{N_{\epsilon}}u_{\theta}\leq C_2.
\end{displaymath}

For every $\theta$ we choose 
$x_{\theta}\in N_{\epsilon}$ 
such that 
\begin{displaymath}
u_{\theta}(x_{\theta})
=\max_{N_{\epsilon}}u_{\theta}
=:m_{\theta}.
\end{displaymath}
We assume that after taking a subsequence we have 
$m_{\theta}\to\infty$ as $\theta\to 0$. 
First we prove the following lemma. 

\begin{lemma}
Let $\alpha>0$. 
Then for all sufficiently small $\theta$ 
there exists 
$x_{\theta}'\in U^{N}_{\epsilon}(2\alpha)$ 
such that we have 
\begin{displaymath}
c_{\alpha}m_{\theta}
\leq u_{\theta}(x_{\theta}')
\leq m_{\theta}, 
\end{displaymath}
where $c_{\alpha}>0$ is independent of $\theta$. 
\end{lemma}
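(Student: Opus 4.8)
The plan is to argue by contradiction using a blow-up at the maximum point $x_\theta$, combined with the uniform $L^{2n/(n-2)}$-bound from Step 4; recall that throughout we are assuming $m_\theta=u_\theta(x_\theta)\to\infty$ along a subsequence. I claim it is enough to show that, for each fixed $\alpha>0$, one has $x_\theta\in U^N_\epsilon(2\alpha)$ for all sufficiently small $\theta$: the lemma then holds with $x_\theta':=x_\theta$ and $c_\alpha:=1$. So fix $\alpha>0$ and suppose, after passing to a subsequence, that $x_\theta$ lies in $K:=N_\epsilon\setminus U^N_\epsilon(2\alpha)$. The key point is that $K$ (and a fixed neighborhood of it) carries a \emph{$\theta$-independent} geometry: for $\theta<2\alpha$ the metric $g_\theta$ equals $F^2g_i=r_i^{-2}g_i$ on each of the two pieces $M_i\setminus U^{M_i}(2\alpha)$ there, it is independent of $\theta$, and $\injrad(N_\epsilon,g_\theta)$ is bounded below on that neighborhood uniformly in $\theta$ (the $t$-range of the neck has length tending to $\infty$, so no short geodesic loops occur). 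Passing to a further subsequence, $x_\theta\to x_*\in K$.

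First I would blow up at $x_\theta$ at the natural scale $b_\theta:=m_\theta^{2/(n-2)}\to\infty$. By Lemma \ref{lemma_limit_space_1}, applied with $\gamma_\theta:=g_\theta$ (which converges in $C^2$ on a fixed neighborhood of $x_*$ by the previous paragraph) and $q_\theta:=x_\theta$, one obtains for each $r>0$ and small $\theta$ a diffeomorphism $\Theta_\theta\colon B^n(r)\to B^{g_\theta}(x_\theta,b_\theta^{-1}r)$ with $\Theta_\theta(0)=x_\theta$ and $\tilde g_\theta:=\Theta_\theta^*(b_\theta^2g_\theta)\to\xi^n$ in $C^2(B^n(r))$. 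Put $\hat u_\theta:=\Theta_\theta^*(m_\theta^{-1}u_\theta)$, so that $0<\hat u_\theta\le1$ and $\hat u_\theta(0)=1$. Since $L_{b_\theta^2g_\theta}=b_\theta^{-2}L_{g_\theta}$ and $L_{g_\theta}u_\theta=-F_\eta$, we get $L_{\tilde g_\theta}\hat u_\theta=-b_\theta^{-2}m_\theta^{-1}\,\Theta_\theta^*F_\eta$, whose sup-norm is at most $b_\theta^{-2}m_\theta^{-1}\|F_\eta\|_{L^\infty}\to0$ regardless of $\Theta_\theta$. Hence, by the proof of Lemma \ref{lemma_limit_space_2} (which goes through when the right-hand side merely tends to zero in $C^0$), a subsequence of $\hat u_\theta$ converges in $C^1_{\mathrm{loc}}(\mR^n)$ to a function $\hat u\ge0$ with $L_{\xi^n}\hat u=\Delta_{\xi^n}\hat u=0$, $\hat u(0)=1$ and $\hat u\le1$; by Liouville's theorem for bounded harmonic functions on $\mR^n$, $\hat u\equiv1$.

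The contradiction then comes from Step 4. A change of variables, using $dv^{\tilde g_\theta}=\Theta_\theta^*(b_\theta^n\,dv^{g_\theta})$ and $b_\theta^n=m_\theta^{2n/(n-2)}$, gives for every $r>0$
\[
\int_{B^n(r)}\hat u_\theta^{\,2n/(n-2)}\,dv^{\tilde g_\theta}
=\int_{B^{g_\theta}(x_\theta,\,b_\theta^{-1}r)}u_\theta^{\,2n/(n-2)}\,dv^{g_\theta}
\le\int_{N}u_\theta^{\,2n/(n-2)}\,dv^{g_\theta}\le C_1 .
\]
On the other hand, by (\ref{limit_L_r_norm}) and $\hat u\equiv1$ the left-hand side tends to $\vol^{\xi^n}(B^n(r))$ as $\theta\to0$, so $\vol^{\xi^n}(B^n(r))\le C_1$ for every $r>0$, which is absurd. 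Therefore $x_\theta\in U^N_\epsilon(2\alpha)$ for $\theta$ small, which proves the lemma (with $x_\theta'=x_\theta$ and $c_\alpha=1$). I expect the one genuinely delicate point to be the verification that the rescaled limit is the flat $\mR^n$ rather than a collapsed or lower-dimensional space; this is exactly why the argument is confined to $K=N_\epsilon\setminus U^N_\epsilon(2\alpha)$, where $g_\theta$ is frozen and of uniformly bounded geometry, the degenerating neck $U^N_\epsilon(2\alpha)$ being deliberately left to the subsequent steps.
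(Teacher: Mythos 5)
Your route is genuinely different from the paper's. The paper never does a blow-up at this stage: it introduces an auxiliary Yamabe metric $\tilde g_\theta=v_\theta^{4/(n-2)}g_\theta$ whose scalar curvature equals $1$ outside $U^N_\epsilon(2\alpha)$, evaluates the transformed equation at the maximum point of $\tilde u_\theta=u_\theta/v_\theta$, and uses only $\Delta_{\tilde g_\theta}\tilde u_\theta\geq 0$ at that point together with the boundedness of $F_\eta v_\theta^{-\frac{n+2}{n-2}}$ to force $\scal_{\tilde g_\theta}(x_\theta')\to 0$, hence $x_\theta'\in U^N_\epsilon(2\alpha)$, with $c_\alpha=b_\alpha/B_\alpha$. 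This is purely local and elementary and, importantly, it only needs to produce \emph{some} point of comparable height inside $U^N_\epsilon(2\alpha)$, i.e.\ away from $p$ and $\supp(F_\eta)$, which is exactly what the later blow-up arguments (Cases 1 and 2 of Step 5) require. You instead prove the stronger statement that the global maximum point itself enters $U^N_\epsilon(2\alpha)$ (so $c_\alpha=1$), by running, already inside the proof of the Lemma, the blow-up-plus-$L^p$-bound argument that the paper reserves for Case 1; if your argument is completed correctly it would in fact make the paper's Case 1 redundant. The rescaling computations, the use of Step 4, and the Liouville conclusion are all correct as far as they go.

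There is, however, a genuine unjustified step: the claim $0<\hat u_\theta\leq 1$, and more generally the appeal to Lemma \ref{lemma_limit_space_2}, which is stated for \emph{positive} solutions of the \emph{homogeneous} equation. The function $u_\theta=G_\theta-\eta r^{2-n}$ is positive only outside $\supp(\eta)$; on $\supp(\eta)$ it may change sign (indeed $u_\theta(p)=m(N,g_\theta)$ can be negative), and near $p$ the subtracted term $\eta r^{2-n}$ is unbounded, so there is no trivial uniform lower bound for $u_\theta$ there. If the maximum points $x_\theta$ accumulate in $\supp(\eta)$ — which nothing in your argument rules out a priori — the two-sided local $L^\infty$ bound needed for the elliptic compactness (and hence for extracting the harmonic limit to which you apply Liouville) is missing as written; you address the inhomogeneity $-F_\eta$ (correctly observing it rescales to zero, though "the proof of Lemma \ref{lemma_limit_space_2} goes through" is asserted rather than checked) but not the sign/lower-bound issue. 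The gap is repairable: since $g$ is flat and $\eta$ is constant on $B(p,\delta)$, $u_\theta$ is harmonic there, so by the maximum principle the maximum point can be chosen outside $B(p,\delta)$; then on the shrinking balls around $x_\theta$ one has $u_\theta\geq-\sup_{\{r\geq\delta/2\}}\eta r^{2-n}>-\infty$, giving $\hat u_\theta\geq -C/m_\theta$, and a harmonic limit bounded above by $1$ and below by $0$, to which Liouville and the Step 4 bound apply as you intend. Note that this positivity problem is precisely what the paper's conformal trick sidesteps, and it is the reason the paper proves the Lemma \emph{before} any blow-up: it guarantees that all blow-up points lie in $U^N_\epsilon(2\alpha)$, where $u_\theta=G_\theta>0$ and $L_{g_\theta}u_\theta=0$, so Lemmas \ref{lemma_limit_space_1}--\ref{lemma_limit_space_3} apply verbatim.
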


\begin{proof}
Let $v$ be a solution to the Yamabe problem 
on $(M,g)$, i.\,e.\,a smooth positive function 
on $M$ such that the Riemannian metric 
$v^{4/(n-2)}g$ has constant scalar curvature $1$ 
on $M$. 
We choose a smooth function 
$\chi_{\alpha}$: $N_{\epsilon}\to[0,1]$ such that 
$\chi_{\alpha}\equiv 1$ on 
$N\setminus U^{N}_{\epsilon}(2\alpha)$ and 
$\chi_{\alpha}\equiv 0$ on 
$U^{N}_{\epsilon}(\alpha)$. 
Then for every $\theta$ the function 
\begin{displaymath}
v_{\theta}
:=F^{-\frac{n-2}{2}}v\chi_{\alpha}+1-\chi_{\alpha}
\end{displaymath}
on $N_{\epsilon}$ is smooth and positive 
and it depends on $\theta$ since 
$F$ depends on $\theta$. 
Now there exist constants 
$b_{\alpha},B_{\alpha}>0$ such that 
for every $\theta$ we have 
\begin{equation}
\label{v_theta_bound}
b_{\alpha}\leq v_{\theta}\leq B_{\alpha}
\end{equation}
on $N_{\epsilon}$. 
For every $\theta$ we define the Riemannian metric 
\begin{displaymath}
\tilde{g}_{\theta}
:=v_{\theta}^{\frac{4}{n-2}}g_{\theta}
\end{displaymath}
on $N_{\epsilon}$. 
Let $\theta$ be so small that on 
$N\setminus U^{N}_{\epsilon}(2\alpha)$ we have 
$g_{\theta}=F^2 g$. 
Then on $N\setminus U^{N}_{\epsilon}(2\alpha)$
we get $\tilde{g}_{\theta}=v^{4/(n-2)}g$ and 
thus 
\begin{equation}
\label{scal_g_theta_tilde}
\scal_{\tilde{g}_{\theta}}\equiv 1\textrm{ on }
N\setminus U^{N}_{\epsilon}(2\alpha).
\end{equation}
For every $\theta$ we define the function 
\begin{displaymath}
\tilde{u}_{\theta}=\frac{u_{\theta}}{v_{\theta}}
\end{displaymath}
on $N_{\epsilon}$ and we choose 
$x_{\theta}'\in N_{\epsilon}$ such that 
\begin{displaymath}
\tilde{u}_{\theta}(x_{\theta}')
=\max_{N_{\epsilon}}\tilde{u}_{\theta}. 
\end{displaymath}
Then for all $\theta$ we have 
by (\ref{v_theta_bound}) 
\begin{equation}
\label{u_theta_tilde_x_theta_prime}
\tilde{u}_{\theta}(x_{\theta}')
\geq\tilde{u}_{\theta}(x_{\theta})
=\frac{m_{\theta}}{v_{\theta}(x_{\theta})}
\geq\frac{m_{\theta}}{B_{\alpha}}
\end{equation}
and thus by our assumption 
$\tilde{u}_{\theta}(x_{\theta}')\to\infty$ 
as $\theta\to 0$. 
By the conformal transformation law 
(\ref{L_g_conform}) for $L_{g_{\theta}}$ 
we have at $x_{\theta}'$
\begin{equation}
\label{x_theta_prime}
(\Delta_{\tilde{g}_{\theta}}\tilde{u}_{\theta})
(x_{\theta}')
+\frac{n-2}{4(n-1)}
\scal_{\tilde{g}_{\theta}}(x_{\theta}')
\tilde{u}_{\theta}(x_{\theta}')
=-F_{\eta}(x_{\theta}')
v_{\theta}(x_{\theta}')^{-\frac{n+2}{n-2}}.
\end{equation}
Notice that the right hand side is bounded 
independently of $\theta$ since on $\supp(F_{\eta})$ 
the function $v_{\theta}$ is independent of $\theta$. 
Since the first term on the left hand side is 
non-negative and since 
$\tilde{u}_{\theta}(x_{\theta}')\to\infty$ 
as $\theta\to 0$ it follows that 
$\scal_{\tilde{g}_{\theta}}(x_{\theta}')\to 0$ 
as $\theta\to 0$. 
Thus by (\ref{scal_g_theta_tilde}) we have 
$x_{\theta}'\in U^{N}_{\epsilon}(2\alpha)$ 
if $\theta$ is small enough.
It remains to prove the inequalities of the 
assertion. 
First, by definition of $m_{\theta}$ we have 
$u_{\theta}(x_{\theta}')\leq m_{\theta}$. 
Second, by (\ref{v_theta_bound}) and 
(\ref{u_theta_tilde_x_theta_prime}) we have 
\begin{displaymath}
u_{\theta}(x_{\theta}')
=v_{\theta}(x_{\theta}')
\tilde{u}_{\theta}(x_{\theta}')
\geq 
\frac{v_{\theta}(x_{\theta}')m_{\theta}}{B_{\alpha}}
\geq 
\frac{b_{\alpha}}{B_{\alpha}}m_{\theta}.
\end{displaymath}
This finishes the proof of the lemma. 
\end{proof}

In the remaining part of the proof of Step 5 
we distinguish two cases.\\

\noindent 
\textbf{Case 1:} 
There exists $c>0$ such that 
$x_{\theta}'\in N\setminus U^{N}_{\epsilon}(c)$ 
for an infinite number of $\theta$.

The proof is very similar to Subcase I.1 
in the proof of Theorem 6.1 in 
\cite{ammann.dahl.humbert:13}. 
After taking a subsequence we may assume that 
there exists 
$\overline{x}\in N\setminus U^{N}_{\epsilon}(c)$ 
such that 
$\lim_{\theta\to 0}x_{\theta}'=\overline{x}$. 
For every $\theta$ we put 
$a_{\theta}:=u_{\theta}(x_{\theta}')$. 
In a neighborhood $U$ of $\overline{x}$ the 
metric $g_{\theta}=F^2g$ is independent of $\theta$ 
if $\theta$ is small enough. 
We define 
$\tilde{g}_{\theta}:=a_{\theta}^{4/(n-2)}g_{\theta}$.
Let $r>0$. 
We apply Lemma \ref{lemma_limit_space_1} 
with $O=U$, $\alpha=\theta$, 
$q_{\alpha}=x_{\theta}'$, 
$q=\overline{x}$, 
$\gamma_{\alpha}=g_{\theta}=F^2g$ 
and $b_{\alpha}=a_{\theta}^{2/(n-2)}$. 
For $\theta$ small we then obtain a diffeomorphism 
\begin{displaymath}
\Theta_{\theta}:\,B^n(r)\to 
B^{g_{\theta}}
(x_{\theta}',a_{\theta}^{-\frac{2}{n-2}}r)
\end{displaymath}
such that the sequence of metrics 
$(\Theta_{\theta}^*(\tilde{g}_{\theta}))$ converges 
to the flat metric $\xi^n$ in $C^2(B^n(r))$. 
For all sufficiently small $\theta$ we have 
\begin{displaymath}
B^{g_{\theta}}
(x_{\theta}',a_{\theta}^{-\frac{2}{n-2}}r)
\cap\supp(F_{\eta})=\emptyset 
\end{displaymath}
and thus $L_{g_{\theta}}u_{\theta}=0$ on 
$B^{g_{\theta}}(x_{\theta}',a_{\theta}^{-2/(n-2)}r)$. 
We define 
$\tilde{u}_{\theta}:=a_{\theta}^{-1}u_{\theta}$. 
By the conformal transformation law 
\ref{L_g_conform} for $L_{g_{\theta}}$ we have 
\begin{displaymath}
L_{\tilde{g}_{\theta}}\tilde{u}_{\theta}=0
\end{displaymath}
on 
$B^{g_{\theta}}(x_{\theta}',a_{\theta}^{-2/(n-2)}r)$ 
and since 
$dv^{\tilde{g}_{\theta}}=a_{\theta}^p dv^{g_{\theta}}$ 
we have 
\begin{align*}
\int_{B^{g_{\theta}}
(x_{\theta}',a_{\theta}^{-\frac{2}{n-2}}r)}
\tilde{u}_{\theta}^p\,dv^{\tilde{g}_{\theta}}
&=\int_{B^{g_{\theta}}
(x_{\theta}',a_{\theta}^{-\frac{2}{n-2}}r)}
u_{\theta}^p\,dv^{g_{\theta}}\\
&\leq\int_{N}u_{\theta}^p\,dv^{g_{\theta}}\\
&\leq C_1
\end{align*}
by Step 4. 
Since 
\begin{displaymath}
\Theta_{\theta}:\,
(B^n(r),\Theta_{\theta}^*(\tilde{g}_{\theta}))\to 
(B^{g_{\theta}}
(x_{\theta}',a_{\theta}^{-\frac{2}{n-2}}r),
\tilde{g}_{\theta})
\end{displaymath}
is an isometry we can consider $\tilde{u}_{\theta}$ 
as a solution of 
\begin{displaymath}
L_{\Theta_{\theta}^*(\tilde{g}_{\theta})}
\tilde{u}_{\theta}=0
\end{displaymath}
on $B^n(r)$ with 
\begin{displaymath}
\int_{B^n(r)} \tilde{u}_{\theta}^p\,
dv^{\Theta_{\theta}^*(\tilde{g}_{\theta})}\leq C_1.
\end{displaymath}
Since 
$\|\tilde{u}_{\theta}\|_{L^{\infty}(B^n(r))}
=|\tilde{u}_{\theta}(0)|=1$ 
we can apply Lemma \ref{lemma_limit_space_2} 
with $V=\mR^n$, $\alpha=\theta$, 
$g_{\alpha}=\Theta_{\theta}^*(\tilde{g}_{\theta})$ 
and $u_{\alpha}=\tilde{u}_{\theta}$. 
We can apply this lemma since every compact subset 
of $\mR^n$ is contained in some ball $B^n(r)$. 
We conclude that there exists a non-negative 
$C^2$-function $u$ on $\mR^n$ such that 
\begin{displaymath}
L_{\xi^n}u=0,\quad u(0)=1,
\end{displaymath}
in particular $u\not\equiv 0$. 
By (\ref{limit_L_r_norm}) we have for every $r>0$
\begin{displaymath}
\int_{B^n(r)}u^p\,dv^{\xi^n}
=\lim_{\theta\to 0}
\int_{B^{g_{\theta}}(x_{\theta}',a_{\theta}
^{-\frac{2}{n-2}}r)} u_{\theta}^p\,dv^{g_{\theta}}
\leq C_1.
\end{displaymath}
In particular 
\begin{displaymath}
\int_{\mR^n}u^p\,dv^{\xi^n}\leq C_1.
\end{displaymath}
After dividing $u$ by a constant we may assume 
that $\int_{\mR^n}u^pdv^{\xi^n}=1$. 
We have obtained a contradiction to Lemma 
\ref{lemma_limit_space_3}. 
This finishes the proof in Case 1.\\

\noindent 
\textbf{Case 2:} 
For every $c>0$ we have 
$x_{\theta}'\in U^{N}_{\epsilon}(c)$ for $\theta$ 
sufficiently small.

The proof is very similar to Subcase I.2 in the proof 
of Theorem 6.1 in \cite{ammann.dahl.humbert:13}. 
Again for every $\theta$ we put 
$a_{\theta}:=u_{\theta}(x_{\theta}')$. 
The subset $U^{N}_{\epsilon}(c)$ is diffeomorphic 
to $W\times I\times S^{n-k-1}$ where $I$ is an 
interval. 
We identify 
\begin{displaymath}
x_{\theta}'=(y_{\theta},t_{\theta},z_{\theta})
\in W\times 
(-\ln R_0+\ln\epsilon,-\ln\epsilon+\ln R_0)
\times S^{n-k-1}.
\end{displaymath}
By taking a subsequence we may assume that 
$y_{\theta}$, $\frac{t_{\theta}}{A_{\theta}}$ 
and $z_{\theta}$ converge respectively to 
$y\in W$, $T\in[-\infty,\infty]$ and 
$z\in S^{n-k-1}$. 
First we apply Lemma \ref{lemma_limit_space_1} 
with $V=W$, $\alpha=\theta$, 
$q_{\alpha}=y_{\theta}$, 
$q=y$, $\gamma_{\alpha}=\tilde{h}_{t_{\theta}}$, 
$\gamma_0=\tilde{h}_T$ 
and $b_{\alpha}=a_{\theta}^{2/(n-2)}$, where we 
define $\tilde{h}_{-\infty}=h_1$ 
and $\tilde{h}_{\infty}=h_2$. 
For every $r>0$ the lemma provides diffeomorphisms 
\begin{displaymath}
\Theta_{\theta}^{y}:\,
B^k(r)\to 
B^{\tilde{h}_{t_{\theta}}}(y_{\theta},
a_{\theta}^{-\frac{2}{n-2}}e^{-f(t_{\theta})}r)
\end{displaymath}
such that 
$(\Theta_{\theta}^{y})^*
(a_{\theta}^{4/(n-2)}e^{2f(t_{\theta})}
\tilde{h}_{t_{\theta}})$ 
converges to the flat metric $\xi^k$ on $B^k(r)$ 
as $\theta\to 0$. 
Second we apply Lemma \ref{lemma_limit_space_1} 
with $V=S^{n-k-1}$, $\alpha=\theta$, 
$q_{\alpha}=z_{\theta}$, 
$\gamma_{\alpha}=\gamma_0=\sigma^{n-k-1}$ 
and $b_{\alpha}=a_{\theta}^{2/(n-2)}$. 
For every $r'>0$ we obtain diffeomorphisms 
\begin{displaymath}
\Theta_{\theta}^{z}:\,
B^{n-k-1}(r')\to B^{\sigma^{n-k-1}}
(z_{\theta},a_{\theta}^{-\frac{2}{n-2}}r')
\end{displaymath}
such that 
$(\Theta_{\theta}^{z})^*
(a_{\theta}^{4/(n-2)}\sigma^{n-k-1})$ converges 
to $\xi^{n-k-1}$ on $B^{n-k-1}(r')$ as 
$\theta\to 0$. 
For $r,r',r''>0$ we define 
\begin{align*}
U_{\theta}(r,r',r'')
&:=
B^{\tilde{h}_{t_{\theta}}}
(y_{\theta},a_{\theta}^{-\frac{2}{n-2}}
e^{-f(t_{\theta})}r)\times 
[t_{\theta}-a_{\theta}^{-\frac{2}{n-2}}r'',
t_{\theta}+a_{\theta}^{-\frac{2}{n-2}}r'']\\
&\quad \times B^{\sigma^{n-k-1}}
(z_{\theta},a_{\theta}^{-\frac{2}{n-2}}r')
\end{align*}
and 
\begin{displaymath}
\Theta_{\theta}:\,
B^k(r)\times[-r'',r'']\times B^{n-k-1}(r')\to 
U_{\theta}(r,r',r'')
\end{displaymath}
by 
\begin{displaymath}
\Theta_{\theta}(y,s,z):=
(\Theta_{\theta}^{y}(y),t(s),
\Theta_{\theta}^{z}(z)),
\end{displaymath}
where $t(s):=t_{\theta}+a_{\theta}^{-2/(n-2)}s$. 
Then $\Theta_{\theta}$ is a diffeomorphism 
and we obtain 
\begin{displaymath}
\Theta_{\theta}^*
(a_{\theta}^{\frac{4}{n-2}}g_{\theta})=
(\Theta_{\theta}^{y})^*
(a_{\theta}^{\frac{4}{n-2}}e^{2f(t)}\tilde{h}_t)
+ds^2
+(\Theta_{\theta}^{z})^*
(a_{\theta}^{\frac{4}{n-2}}\sigma^{n-k-1})
+\Theta_{\theta}^*
(a_{\theta}^{\frac{4}{n-2}}\widetilde{T}_t).
\end{displaymath}
As in Subcase I.2 in the proof of Theorem 6.1 
in \cite{ammann.dahl.humbert:13} one shows that 
the sequence of Riemannian metrics 
$\Theta_{\theta}^*(a_{\theta}^{4/(n-2)}g_{\theta})$ 
tends to the flat metric $\xi^n$. 
Then as in the proof of Case 1 above one obtains 
a non-negative $C^2$-function $u$ 
satisfying 
\begin{displaymath}
L_{\xi^n}u=0,\quad u(0)=1,\quad 
\int_{\mR^n}u^p\,dv^{\xi^n}<\infty.
\end{displaymath}
In particular $u\not\equiv 0$ 
and one obtains a contradiction to Lemma 
\ref{lemma_limit_space_3} as above. 
This finishes the proof of Step 5.\\

By Steps 3 and 5 
we know that there exist $a>0$ and $C>0$ such that 
for every $\theta$ we have 
\begin{equation}
\label{u_theta_L2_bound}
\int_{U^{M}(a)}u_{\theta}^2\,dv^{g_{\theta}}
\leq C.
\end{equation}
We recall that for $\alpha>0$ we have defined 
\begin{displaymath}
A_{\alpha}:=U^{M}(2\alpha)\setminus U^{M}(\alpha)
\subset M.
\end{displaymath}
Next we define 
\begin{displaymath}
E:=\liminf_{\alpha\to 0}\liminf_{\theta\to 0}
\int_{A_{\alpha}} u_{\theta}^2\,dv^{g_{\theta}}.
\end{displaymath}

\noindent 
\textbf{Step 6:} 
Conclusion.

By the result of Step 2 it remains to show 
that $E=0$. 
We proceed similarly as on p.\,50 of the article 
\cite{ammann.dahl.humbert:13}. 
Namely there exists $\delta>0$ such that for every 
$\alpha\in(0,\delta)$ we have 
\begin{displaymath}
\liminf_{\theta\to 0}
\int_{A_{\alpha}} u_{\theta}^2\,dv^{g_{\theta}}
\geq \frac{E}{2}.
\end{displaymath}
For $m\in\mN$ we set $\alpha_m:=2^{-m}\delta$. 
Then we have 
\begin{displaymath}
\liminf_{\theta\to 0}
\int_{A_{\alpha_m}} u_{\theta}^2\,dv^{g_{\theta}}
\geq \frac{E}{2}
\end{displaymath}
for all $m$. 
Let $N_0\in\mN$. 
The sets $A_{\alpha_m}$, $m\in\mN$, are disjoint and 
therefore we have 
\begin{displaymath}
\int_{U^{M}(\delta)}u_{\theta}^2\,dv^{g_{\theta}}
\geq 
\int_{\bigcup_{m=1}^{N_0}A_{\alpha_m}}
u_{\theta}^2\,dv^{g_{\theta}}
=\sum_{m=1}^{N_0}
\int_{A_{\alpha_m}} u_{\theta}^2\,dv^{g_{\theta}}
\end{displaymath}
for all $\theta$. 
From this we obtain 
\begin{align*}
\liminf_{\theta\to 0}
\int_{U^{M}(\delta)}
u_{\theta}^2\,dv^{g_{\theta}}
&\geq 
\liminf_{\theta\to 0}
\sum_{m=1}^{N_0}
\int_{A_{\alpha_m}} 
u_{\theta}^2\,dv^{g_{\theta}}\\
&\geq 
\sum_{m=1}^{N_0}\liminf_{\theta\to 0}
\int_{A_{\alpha_m}} 
u_{\theta}^2\,dv^{g_{\theta}}\\
&\geq 
\frac{E N_0}{2}.
\end{align*}
Assume that $E>0$. 
Since $N_0\in\mN$ can be chosen arbitrarily large, 
we obtain a contradiction to the estimate 
(\ref{u_theta_L2_bound}). 
Thus we have $E=0$ and Theorem 
\ref{theorem_preservation_mass} is proved.

\section{Application to the positive mass conjecture}
\label{section_pmt_non_spin}

In this section we study an application of 
Theorem \ref{theorem_preservation_mass} to the 
positive mass conjecture. 
By a simply connected manifold $T$ we mean a connected 
manifold $T$ with $\pi_1(T)=\{0\}$. 
If $T$ is an oriented manifold, we denote by $-T$ 
the manifold $T$ with the opposite orientation. 

\begin{lemma}
\label{lemma_orient_cobord}
Let $X_1$ be a closed 
simply connected oriented non-spin manifold 
of dimension $n\geq 5$ and let $X_0$ be a manifold 
of dimension $n$ which is oriented cobordant to 
$X_1$. 
Then $X_1$ can be obtained from $X_0$ by 
finitely many surgeries of dimension 
$k\in\{0,...,n-3\}$. 
\end{lemma}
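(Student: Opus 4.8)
The plan is to realize the oriented cobordism by a handle decomposition all of whose handles have index between $3$ and $n$, and then to read off the surgeries from the dual decomposition. Since $X_0$ and $X_1$ are oriented cobordant, there is a compact oriented $(n+1)$-dimensional cobordism $W$ with $\partial_-W=X_1$ and $\partial_+W=X_0$; we may take $W$ connected (two components of a cobordism between connected manifolds can always be joined by a $1$-handle). A self-indexing Morse function decomposes $W$, relative to $X_1$, into handles of indices $0,\dots,n+1$, and attaching a $\lambda$-handle effects a surgery of dimension $\lambda-1$ on the level hypersurface as one climbs from $X_1$ to $X_0$. Turning $W$ upside down, the same $\lambda$-handle becomes an $(n+1-\lambda)$-handle, so $X_1$ is obtained from $X_0$ by surgeries of dimensions $n-\lambda$. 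Hence it suffices to arrange that every handle index lies in $\{3,\dots,n\}$, since then $n-\lambda$ runs through $\{0,\dots,n-3\}$.

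The first reductions are classical handle trading, carried out in this order. First I would kill the $0$- and $(n+1)$-handles: a $0$-handle rel $X_1$ starts a new component which must be reconnected, so since $X_0$ is connected it cancels with a $1$-handle; dually an $(n+1)$-handle is a $0$-handle of a cobordism ending in the connected manifold $X_1$, hence cancels likewise. Next I would trade the $1$-handles for $3$-handles: since $\dim W=n+1\ge6$ and $\pi_1(X_1)=0$, the standard device (create a cancelling $(2,3)$-handle pair, then remove the $1$-handle against the new $2$-handle by a Whitney move) eliminates all $1$-handles at the cost of extra $3$-handles. After these steps all handles have index in $\{2,\dots,n\}$.

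The remaining step, trading away the $2$-handles, is the heart of the matter and is exactly where the hypothesis that $X_1$ is non-spin is used. The attaching circle $\gamma$ of a $2$-handle can be pushed into $X_1$, and since $\pi_1(X_1)=0$ and $n\ge5$ it bounds an embedded $2$-disk $D\subset X_1$ with trivial normal bundle; the handle's attaching framing then differs from the framing induced by $D$ by an element of $\pi_1(\SO(n-1))\cong\mZ/2$. Because $w_2(X_1)\neq0$ there is an embedded $2$-sphere $S\subset X_1$ with non-trivial normal bundle; tubing $D$ to a copy of $S$ (possible since $X_1$ is connected and $n\ge5$) produces another embedded disk bounding $\gamma$ whose induced framing is the other element of $\mZ/2$. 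Thus one can always choose the bounding disk so that its framing matches the handle's, and then the $2$-handle is traded for a $4$-handle by the same create-a-cancelling-pair argument. Doing this for every $2$-handle leaves a decomposition with handles only of index $3,\dots,n$, and dualizing as above exhibits $X_1$ as obtained from $X_0$ by surgeries of dimension in $\{0,\dots,n-3\}$.

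The steps other than the framing correction are routine handle calculus in the spirit of \cite{gromov.lawson:80} and \cite{ammann.dahl.humbert:13}. The part I expect to require genuine care is the last one: verifying that the bounding disks, the tube to $S$, and the various Whitney moves all embed in the available codimension range, and checking that the framing indeed changes by the generator of $\mZ/2$ when one tubes through a sphere with non-trivial normal bundle — this is the precise point at which "oriented but non-spin" buys the extra flexibility that "spin" would not.
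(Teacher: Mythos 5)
Your overall strategy (trade away the handles of index $0,1,2$ and $n+1$ in a handle decomposition of $W$ relative to $X_1$, using the non-spin hypothesis to correct the $\pi_1(\SO(n-1))\cong\mZ/2$ framing obstruction when trading $2$-handles) is a genuinely different route from the paper, which instead follows Gromov--Lawson's proof of their Theorem C: make $X_0$ and $W$ simply connected by preliminary surgeries, use that $X_1$ is non-spin to make $\pi_2(X_1)\to\pi_2(W)$ surjective, deduce $H_i(W,X_0)=H_i(W,X_1)=0$ for $i\le 2$, and quote Smale's handle decomposition theorem \cite{smale:62}, \cite{kosinski:93}. Your $2$-handle step is sound and nicely isolates where non-spin enters: the $2$-handles attach along circles in $X_1$ itself, simple connectivity of $X_1$ gives embedded bounding disks there, and tubing with an embedded $2$-sphere on which $w_2$ is non-zero (such spherical classes exist because $X_1$ is simply connected, so all of $H_2(X_1;\mZ/2)$ is spherical) indeed flips the induced boundary framing by the generator of $\mZ/2$; this replaces the paper's use of $\pi_2$-surjectivity into $W$.

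The genuine gap is the $1$-handle step. Eliminating all $0$- and $1$-handles of $W$ relative to $X_1$ forces $\pi_1(X_1)\to\pi_1(W)$ to be surjective: once only handles of index $\ge 2$ remain, $\pi_1(W)$ is a quotient of $\pi_1(X_1)=0$. But the given cobordism $W$ need not be simply connected (take any oriented cobordism and form the interior connected sum with $S^1\times S^n$), so ``$\dim W\ge 6$ and $\pi_1(X_1)=0$'' is not sufficient for the $(2,3)$-pair trading device; what is needed is $1$-connectivity of the pair $(W,X_1)$, i.e. $\pi_1(W)=0$ in the present situation, and this is an invariant of $W$, not of the chosen handle decomposition. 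The fix is the step you omitted and the paper makes explicit: before any handle trading, perform surgeries on embedded circles in the interior of $W$ (they have trivial normal bundle since $W$ is orientable) so as to kill $\pi_1(W)$; this changes $W$ but not $\partial W$, and afterwards your trading argument (or the paper's appeal to \cite{gromov.lawson:80} and \cite{smale:62}) goes through.
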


\begin{proof}
The assertion follows from the proof of Theorem C 
in the article \cite{gromov.lawson:80} 
by Gromov and Lawson. 
Namely let $W$ be an oriented cobordism from 
$X_0$ to $X_1$. 
After applying finitely many surgeries of dimension 
$0$ or $1$ to $X_0$ and then to $W$ we may assume 
that $X_0$ and $W$ are simply connected. 
After further applying surgeries and using that 
$X_1$ is not spin we can assume that the 
induced homomorphism $\pi_2(X_1)\to\pi_2(W)$ is 
surjective. 
It follows that for $i\leq 2$ we have 
$H_i(W,X_0)=0$ and $H_i(W,X_1)=0$. 
The assertion then follows from a result by Smale 
(\cite{smale:62}, see also 
\cite[VIII Thm. 4.1]{kosinski:93}). 
\end{proof}

\begin{definition}
We say that a closed manifold $M$ satisfies PMT 
if for every Riemannian metric $g$ on $M$ 
with $Y(M,g)>0$ and for every point $p\in M$ 
such that $g$ is flat on an open neighborhood 
of $p$ we have $m(M,g)\geq 0$ at $p$.
\end{definition}

\begin{lemma}
\label{pmt_a}
Let $M$, $N$ be two closed manifolds of dimension 
$n$ such that $N$ satisfies PMT. 
Assume that $M$ is obtained from $N$ by surgery of 
dimension $\ell\in\{2,...,n-1\}$. 
Then $M$ satisfies PMT. 
\end{lemma}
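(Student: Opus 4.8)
The plan is to argue by contradiction, using the fact that a surgery of dimension $\ell$ can be undone by a surgery of the complementary dimension, combined with the mass–preservation statement of Theorem \ref{theorem_preservation_mass}. Suppose $M$ does not satisfy PMT. Then there exist a Riemannian metric $g$ on $M$ with $Y(M,g)>0$ and a point $p\in M$ such that $g$ is flat on an open neighborhood of $p$ and $m(M,g)<0$ at $p$. From these data I will manufacture a metric on $N$ violating PMT, contradicting the hypothesis.

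First I would recall the standard handle–calculus fact that surgery is reversible: if $M$ arises from $N$ by a surgery of dimension $\ell$ along a framed embedded sphere $S^\ell\subset N$, then $N$ arises from $M$ by a surgery of dimension $n-\ell-1$ along the dual (belt) sphere $S^{n-\ell-1}\subset M$, with the framing induced by the trace cobordism (the dual of an index $\ell+1$ handle in an $(n+1)$–dimensional cobordism is an index $n-\ell$ handle). Since $\ell\in\{2,\dots,n-1\}$ by hypothesis, $n-\ell-1\in\{0,\dots,n-3\}$, which is precisely the range of dimensions to which Theorem \ref{theorem_preservation_mass} applies. Next I would arrange that this surgery on $M$ avoids $p$: the belt sphere $S^{n-\ell-1}\subset M$ has codimension $\ell+1\geq 3$, so a small isotopy of the framed embedding (which does not change the diffeomorphism type of the surgered manifold, by the isotopy extension theorem) pushes it off $p$; then, since $\dist_g(p,S^{n-\ell-1})>0$, one can take the tubular neighborhood to be removed small enough that it is disjoint from a flat ball around $p$, so that the hypotheses of Theorem \ref{theorem_preservation_mass} are satisfied.

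Then I would apply Theorem \ref{theorem_preservation_mass} to $(M,g)$ and this surgery of dimension $n-\ell-1$ not hitting $p$. It yields a family of metrics $(g_\theta)$ on $N$ which coincide with $g$ near $p$ (hence are flat there), for which $Y(N,g_\theta)>0$ when $\theta$ is small, so that $m(N,g_\theta)$ is defined at $p$, and for which $\lim_{\theta\to 0}m(N,g_\theta)=m(M,g)$. Since $m(M,g)<0$, it follows that $m(N,g_\theta)<0$ for all sufficiently small $\theta$. This contradicts the assumption that $N$ satisfies PMT, so $M$ must satisfy PMT.

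The only nontrivial ingredient is the topological bookkeeping in the second paragraph — the identification of the inverse of an $\ell$–surgery as an $(n-\ell-1)$–surgery together with a compatible framing, and the routine general–position argument that keeps the surgery region away from $p$; both are classical. Everything analytic is already encapsulated in Theorem \ref{theorem_preservation_mass}, and in particular the fact that the $g_\theta$ agree with $g$ on a fixed neighborhood of $p$ is built into the construction of Section \ref{section_definition_metrics}, where one arranges $U\cap U^{M}(R_{\max})=\emptyset$.
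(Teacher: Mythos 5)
Your proposal is correct and follows essentially the same route as the paper: undo the $\ell$-surgery by the complementary $(n-\ell-1)$-surgery, note that $\ell\in\{2,\dots,n-1\}$ puts $n-\ell-1$ in the range $\{0,\dots,n-3\}$, and apply Theorem \ref{theorem_preservation_mass} to transport a hypothetical negative-mass metric from $M$ to $N$, contradicting PMT on $N$. The extra details you supply (the belt-sphere description of the reverse surgery and the general-position argument keeping the surgery sphere away from $p$) are correct refinements of what the paper leaves implicit.
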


\begin{proof}
In general any surgery of dimension $\ell$ on a 
manifold of dimension $n$ can be undone by a 
surgery of dimension $n-1-\ell$. 
Thus $N$ can be obtained from $M$ by surgery of 
dimension $k\in\{0,...,n-3\}$ and the assertion 
follows from Theorem 
\ref{theorem_preservation_mass}. 
\end{proof}

\begin{lemma}
\label{pmt_conn_sum}
Let $M$ and $P$ be two closed manifolds of the 
same dimension. 
Assume that $M$ does not satisfy PMT and that 
there exists a Riemannian metric $h$ on $P$ 
with $Y(P,h)>0$. 
Then the connected sum $M\# P$ does not satisfy 
PMT. 
\end{lemma}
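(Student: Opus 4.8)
The plan is to produce, on $M\#P$, a metric with positive Yamabe constant that is flat near a point and has negative mass there, by grafting a copy of $(P,h)$ onto a metric on $M$ witnessing the failure of PMT via the connected-sum construction of Section~\ref{section_definition_metrics} and then invoking the mass-preservation Theorem~\ref{theorem_preservation_mass}.

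First I would use the hypothesis that $M$ does not satisfy PMT to fix a metric $g$ on $M$ with $Y(M,g)>0$, a point $p\in M$, and an open neighborhood $U$ of $p$ on which $g$ is flat, such that $m(M,g)<0$ at $p$. Then I would apply the connected-sum construction of Section~\ref{section_definition_metrics} with $M_1:=M$, $g_1:=g$, $M_2:=P$, $g_2:=h$, and $W$ a single point, so that $k=0\le n-3$ and $N_\epsilon\cong M\#P$; the sum is performed at a point of $M$ lying outside $\overline U$, and after shrinking $U$ we may arrange that $U\cap U^{M_1}(R_{\max})=\emptyset$. This produces a family of metrics $(g_\theta)_{\theta>0}$ on $M\#P$, each equal to $g$ on $U$; hence each $g_\theta$ is flat near $p$, and $p$ together with its flat chart survives the operation.

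Next I would check that $Y(M\#P,g_\theta)>0$ for $\theta$ small. Since $2/p=(n-2)/n<1$, the function $t\mapsto t^{2/p}$ is subadditive, which forces $Y(M\amalg P,g\amalg h)=\min\{Y(M,g),Y(P,h)\}>0$ (for $u=(u_1,u_2)$ one has $\int uL_{g\amalg h}u\geq\min\{Y(M,g),Y(P,h)\}(\|u_1\|_p^2+\|u_2\|_p^2)\geq\min\{Y(M,g),Y(P,h)\}\|u\|_p^2$). Then Theorem~6.1 of \cite{ammann.dahl.humbert:13}, in the general connected-sum form used throughout Section~\ref{section_surgery_preservation}, gives $\liminf_{\theta\searrow 0}Y(M\#P,g_\theta)\geq\min\{Y(M\amalg P,g\amalg h),\Lambda_{n,0}\}>0$, so $m(M\#P,g_\theta)$ at $p$ is defined for small $\theta$.

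Finally I would invoke Theorem~\ref{theorem_preservation_mass}: although it is stated for surgery ($M_2=S^n$), its proof in Section~\ref{section_proof_preservation_mass} uses only the general construction of Section~\ref{section_definition_metrics}, the variational characterization of Theorem~\ref{maintheorem}, the $WS$-bundle estimate of Theorem~\ref{theorem_ws_bundle}, the limit-space Lemmas~\ref{lemma_limit_space_1}--\ref{lemma_limit_space_3}, and positivity of the Yamabe constant of the disjoint union, all of which hold with $P$ in place of $S^n$; so the argument applies verbatim and yields $\lim_{\theta\to 0}m(M\#P,g_\theta)=m(M,g)<0$. Thus for $\theta$ small, $g_\theta$ is a metric on $M\#P$ with positive Yamabe constant, flat near $p$, and with negative mass at $p$, so $M\#P$ does not satisfy PMT. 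The only genuinely non-formal point is this last one: confirming that Steps~3 and~5 of Section~\ref{section_proof_preservation_mass} (the $\theta$-uniform $L^2$-bound on the neck and the $\theta$-uniform sup-bound on $u_\theta$) use no special feature of $S^n$; here it actually helps that with $W$ a point the surgery neck is simply the cylinder $\mR\times S^{n-1}$, which makes those steps a special case of, and no harder than, the general surgery situation already treated.
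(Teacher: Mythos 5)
Your argument is correct in substance, but it takes a genuinely different route from the paper. The paper never performs the connected sum of $(M,g)$ and $(P,h)$ directly: it first passes to the disjoint union $M\amalg P$ with the metric $g\amalg h$, observes that $Y(M\amalg P,g\amalg h)=\min\{Y(M,g),Y(P,h)\}>0$ and that the Green function of $L_{g\amalg h}$ at $p$ is $G_g$ on $M$ and $\equiv 0$ on $P$, so that $m(M\amalg P,g\amalg h)=m(M,g)<0$; it then notes that $M\#P$ is obtained from $M\amalg P$ by a surgery of dimension $0$ (the construction of Section \ref{section_definition_metrics} with $M_2=S^n$, $W=S^0$, one point of $S^0$ in each component) and applies Theorem \ref{theorem_preservation_mass} as stated. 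You instead realize $M\#P$ as a connected sum of $(M,g)$ and $(P,h)$ along a point ($M_2=P$, $W=\{\mathrm{pt}\}$, $k=0$) and invoke the mass-preservation theorem in a generalized form, which obliges you to re-inspect its proof: Theorem 6.1 of \cite{ammann.dahl.humbert:13} in its general connected-sum form for Step 4, the neck estimate of Step 3, the blow-up analysis of Step 5 (where the auxiliary Yamabe-type conformal factor must now be chosen on $M\amalg P$, so $Y(P,h)>0$ is used there as well), and Step 2, none of which uses any special feature of $S^n$ beyond $Y(S^n,\sigma^n)>0$. Your verification of these points is essentially right, so your route works, at the price of effectively extending Theorem \ref{theorem_preservation_mass} to arbitrary connected sums along a submanifold; note also that for the lemma only the upper-bound direction $\limsup_{\theta\to 0}m(M\#P,g_\theta)\leq m(M,g)$ is needed. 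The paper's trick of absorbing $(P,h)$ into the ``old'' manifold via the disjoint union buys the lemma as an immediate corollary of the theorem as stated, the only (harmless) stretch being that the theorem is applied there to a disconnected manifold, a case its proof covers since it already works with $M\amalg S^n$.
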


\begin{proof}
Let $g$ be a Riemannian metric on $M$ with 
$Y(M,g)>0$ such that 
at some point $p\in M$ we have $m(M,g)<0$. 
The metric $g\amalg h$ on the disjoint union 
$M\amalg P$ satisfies 
\begin{displaymath}
Y(M\amalg P,g\amalg h)
=\min\{Y(M,g), Y(P,h)\}>0
\end{displaymath}
(see e.\,g.Section 1.2 in 
\cite{ammann.dahl.humbert:13}). 
The Green function of $L_{g\amalg h}$ is given by 
\begin{displaymath}
G_{g\amalg h}=\left\{
\begin{array}{ll}
G_g & \textrm{on }M,\\ 
0   & \textrm{on }P
\end{array} 
\right.
\end{displaymath}
and thus at $p$ we have 
$m(M\amalg P,g\amalg h)=m(M,g)<0$, 
i.\,e.\,$M\amalg P$ does not satisfy PMT. 
Since $M\# P$ can be obtained from $M\amalg P$ by 
surgery of dimension~$0$ Theorem 
\ref{theorem_preservation_mass} shows that 
$M\# P$ does not satisfy PMT. 
\end{proof}

\begin{theorem}
\label{theorem_pmt}
Assume that there exists a closed orientable 
simply-connected non-spin 
manifold of dimension $n\geq 5$ satisfying PMT. 
Then every closed manifold of dimension $n$ 
satisfies PMT. 
\end{theorem}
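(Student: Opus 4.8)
The plan is to prove the contrapositive: if $M_0$ is a closed simply connected oriented non-spin manifold of dimension $n\ge 5$ and some closed $n$-manifold $M$ does \emph{not} satisfy PMT, then $M_0$ does not satisfy PMT; since $M_0$ is assumed to satisfy PMT, this forces every closed $n$-manifold to satisfy PMT. Here ``$M$ does not satisfy PMT'' means there is a metric $g$ on $M$ with $Y(M,g)>0$, a point $q$ at which $g$ is flat, and $m(M,g)<0$ at $q$. I would use three operations, each of which turns a manifold failing PMT into another one failing PMT: passing to a finite Riemannian cover; connected sum with a manifold of positive Yamabe constant, which is precisely Lemma~\ref{pmt_conn_sum}; and a surgery of dimension $k\in\{0,\dots,n-3\}$ performed away from $q$, which follows from Theorem~\ref{theorem_preservation_mass}, because the metrics $g_{\theta}$ it produces are flat near $q$, have positive Yamabe constant for $\theta$ small, and satisfy $m(\cdot,g_{\theta})\to m(M,g)<0$.

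\emph{Reduction to the orientable case.} If $M$ is non-orientable, let $\pi\colon\widetilde M\to M$ be the orientation double cover (connected, closed, orientable, of dimension $n$), and let $g,q$ witness that $M$ fails PMT. Put $\widetilde g:=\pi^{*}g$ and let $\widetilde q_1,\widetilde q_2$ be the two preimages of $q$. Pulling back the first eigenfunction of $L_g$ gives a \emph{positive} eigenfunction of $L_{\widetilde g}$, which must be a ground state, so $\lambda_1(L_{\widetilde g})=\lambda_1(L_g)>0$ and $Y(\widetilde M,\widetilde g)>0$. From $L_{\widetilde g}(\pi^{*}G_g)=\pi^{*}\delta_q=\delta_{\widetilde q_1}+\delta_{\widetilde q_2}$ and uniqueness of Green functions we get $\pi^{*}G_g=G_{\widetilde g}^{\widetilde q_1}+G_{\widetilde g}^{\widetilde q_2}$; comparing the expansions of Proposition~\ref{green_function} near $\widetilde q_1$ (using that $\pi$ is a local isometry) yields $m(\widetilde M,\widetilde g)=m(M,g)-G_{\widetilde g}^{\widetilde q_2}(\widetilde q_1)<m(M,g)<0$, since $G_{\widetilde g}^{\widetilde q_2}$ is strictly positive away from $\widetilde q_2$. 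Thus $\widetilde M$ is orientable and fails PMT, and we may assume $M$ is orientable.

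\emph{Adjusting the oriented bordism class.} I claim that in dimension $n\ge 5$ every class of the oriented bordism group $\Omega_n^{\mathrm{SO}}$ has a closed oriented representative of positive Yamabe constant. Indeed, pick a simply connected representative $Q$ (standard via surgeries of dimension $0$ and $1$, which also preserve orientability); if $Q$ is spin, replace it by $Q\#Z_0$, where $Z_0$ is a closed oriented simply connected \emph{non-spin} $n$-manifold with $[Z_0]=0$ in $\Omega_n^{\mathrm{SO}}$ --- for $n\ge 6$ one may take $Z_0:=(\mC P^{2}\#\overline{\mC P^{2}})\times S^{n-4}$, and for $n=5$ the total space of the nontrivial $S^{3}$-bundle over $S^{2}$ (null-cobordant because $\Omega_5^{\mathrm{SO}}=0$). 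This does not change the bordism class and makes the representative simply connected and non-spin, so it carries a metric of positive scalar curvature by Gromov--Lawson \cite{gromov.lawson:80}, hence of positive Yamabe constant. Now choose a connected such manifold $P$ with $[P]=[M_0]-[M]$ in $\Omega_n^{\mathrm{SO}}$ and set $M_1:=M\#P$. By Lemma~\ref{pmt_conn_sum}, $M_1$ fails PMT; it is orientable and $[M_1]=[M]+[P]=[M_0]$, so $M_1$ is oriented cobordant to $M_0$.

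\emph{Conclusion.} Since $M_0$ is closed, simply connected, oriented, non-spin and of dimension $n\ge 5$, Lemma~\ref{lemma_orient_cobord} (applied with $X_1=M_0$ and $X_0=M_1$) shows that $M_0$ is obtained from $M_1$ by finitely many surgeries of dimension $k\in\{0,\dots,n-3\}$. Following a point of $M_1$ with a flat neighbourhood and negative mass through this chain of surgeries --- at each stage isotoping the surgery sphere, of codimension $\ge 3$, off the relevant point --- and applying Theorem~\ref{theorem_preservation_mass} at every step, we conclude that $M_0$ fails PMT, contradicting the hypothesis. Hence every closed $n$-manifold satisfies PMT. The real work of this proof is the topological bookkeeping that brings an arbitrary $M$ into the cobordism class of the fixed $M_0$: the non-orientable reduction and, above all, the realization of every oriented bordism class by a positive-Yamabe manifold so that the connected sum in the third step lands in $[M_0]$. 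Once $M_1$ lies in the right class, Lemma~\ref{lemma_orient_cobord} together with Theorem~\ref{theorem_preservation_mass} --- the genuinely hard analytic input, already established --- finishes the argument almost formally.
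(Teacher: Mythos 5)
Your proposal is correct, and while it relies on the same analytic core as the paper (Theorem \ref{theorem_preservation_mass} to push failure of PMT through surgeries of dimension $k\in\{0,\dots,n-3\}$, Lemma \ref{pmt_conn_sum} for connected sums, Lemma \ref{lemma_orient_cobord} to realize an oriented cobordism by such surgeries, and the covering identity $m(N,g)=m^{\tilde p_1}(\tilde N,\tilde g)+\tilde G_2(\tilde p_1)$, which you use in the contrapositive direction), the bordism bookkeeping is genuinely different. The paper symmetrizes both manifolds into the null-bordant class: it replaces the failing manifold $N$ by $N\#(-N)$ and must show that $M_0\#(-M_0)$ satisfies PMT, which forces the auxiliary detour through $M_0\#M_0\#(-M_0)$ and Lemma \ref{pmt_a}, i.e.\ the observation that a surgery of dimension $\ell\in\{2,\dots,n-1\}$ can be undone by one of dimension $n-1-\ell$. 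You instead shift the failing manifold directly into the class $[M_0]\in\Omega_n^{\mathrm{SO}}$ by taking a connected sum with a positive-Yamabe representative $P$ of $[M_0]-[M]$, which lets you apply Lemma \ref{lemma_orient_cobord} and Theorem \ref{theorem_preservation_mass} once, in the forward direction only, and dispenses with Lemma \ref{pmt_a} altogether. The price is extra topological input: you need that every class in $\Omega_n^{\mathrm{SO}}$, $n\geq 5$, has a connected, simply connected, non-spin representative (hence, by Gromov--Lawson \cite{gromov.lawson:80}, one of positive scalar curvature and so of positive Yamabe constant), and your explicit null-bordant non-spin correction manifolds ($(\mC P^2\#\overline{\mC P^2})\times S^{n-4}$ for $n\geq 6$, the twisted $S^3$-bundle over $S^2$ for $n=5$, using $\Omega_5^{\mathrm{SO}}=0$) do the job; the paper only needs the PSC metric on the single given manifold $M_0$. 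Both arguments also handle the non-orientable case by the same two-fold cover computation, and your remark that the surgery spheres, having dimension at most $n-3$, can be isotoped off the distinguished point matches the paper's standing assumption that the surgery does not hit $p$. In short: same machinery, a different and equally valid way of matching bordism classes, trading the paper's $X\#(-X)$ trick for a standard representation theorem in oriented bordism.
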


\noindent Note that, by Proposition 4.1 in \cite{schoen:89} or Section 5 in \cite{lohkamp:99}, this theorem could also be stated for the ADM-mass in the context of the standard positive mass conjecture coming from general relativity. 

\begin{proof}
Let $M$ be a closed oriented simply-connected non-spin 
manifold 
of dimension $n$ satisfying PMT. 
The manifold $M\#M\#(-M)$ is oriented cobordant to 
$M$. 
By Lemma \ref{lemma_orient_cobord} the manifold 
$M$ can be obtained from $M\#M\#(-M)$ by finitely 
many surgeries of dimension $k\in\{0,...,n-3\}$. 
Therefore $M\#M\#(-M)$ can be obtained from $M$ 
by finitely many surgeries of dimension 
$\ell\in\{2,...,n-1\}$. 
Since $M$ satisfies PMT it follows from 
Lemma \ref{pmt_a} that 
$M\#M\#(-M)$ satisfies PMT. 
By Lemma \ref{pmt_conn_sum} we conclude that 
$M\#(-M)$ satisfies PMT. 

Let $N$ be a closed manifold of dimension $n$. 
Assume first that $N$ is orientable and choose 
an orientation on $N$. 
Assume that $N$ does not satisfy PMT. 
By Lemma \ref{pmt_conn_sum} it follows that 
$N\#(-N)$ does not satisfy PMT. 
Now $N\#(-N)$ is oriented cobordant to $M\#(-M)$ 
since both manifolds are oriented cobordant to $S^n$. 
Furthermore $M\#(-M)$ is simply connected and 
non-spin. 
By Lemma \ref{lemma_orient_cobord} the manifold 
$M\#(-M)$ can be obtained from $N\#(-N)$ by finitely 
many surgeries of dimension $k\in\{0,...,n-3\}$. 
By Theorem \ref{theorem_preservation_mass} the 
manifold $M\#(-M)$ does not satisfy PMT which is 
a contradiction. 

Next assume that $N$ is not orientable. 
Let $\pi$: $\tilde{N}\to N$ be the two-fold 
orientable covering of $N$. 
Let $g$ be a Riemannian metric on $N$ which is flat 
on an open neighborhood of a point $p\in N$ 
and such that $L_g$ is a positive operator. 
Let $\tilde{g}$ be the Riemannian metric on 
$\tilde{N}$ such that $\pi$ is a Riemannian 
covering. 
Since the first eigenvalue $\tilde{\lambda}_0$ of 
$L_{\tilde{g}}$ is simple and the corresponding 
eigenfunctions do not change their sign, 
$\tilde{\lambda}_0$ is also an eigenvalue of 
$L_g$. 
It follows that $L_{\tilde{g}}$ is a positive 
operator. 
Now if we write 
$\pi^{-1}(p)=\{\tilde{p}_1,\tilde{p}_2\}$ 
and if $\tilde{G}_1$, $\tilde{G}_2$ denote 
the Green functions for $L_{\tilde{g}}$ 
at $\tilde{p}_1$ and $\tilde{p}_2$ respectively, 
then for the Green function $G$ of $L_g$ at $p$ 
we have $G\circ\pi=\tilde{G}_1+\tilde{G}_2$. 
In particular if 
$m^{\tilde{p}_1}(\tilde{N},\tilde{g})$ denotes 
the mass of $(\tilde{N},\tilde{g})$ 
at $\tilde{p}_1$, then for the mass of $L_g$ 
at $p$ we have 
$m(N,g)=m^{\tilde{p}_1}(\tilde{N},\tilde{g})
+\tilde{G}_2(\tilde{p}_1)>0$. 
\end{proof}

\noindent  It is easy to find examples of closed 
orientable simply-connected non-spin manifolds, 
e.\,g.\,$\mC P^{2m}$ or $\mC P^{2m}\times S^k$ 
with $k\geq 2$. 
Our hope is that among these examples one can 
find manifolds of dimension at least $8$ 
satisfying PMT. 
However we have not yet succeeded. 
Among the manifolds of dimension at least 
$8$ satisfying PMT we know examples 
which are simply connected and spin 
(by Section \ref{section_pmt_spin}) 
and examples 
which are not simply-connected and non-spin: indeed, we have

\begin{proposition}
 Let $n \geq 5$, $n\equiv 1 \bmod 4$. Then, the projective space $\mR P^n$ satisfies PMT. 
\end{proposition}

\begin{proof}
Let $g$ be a metric on $\mR P^n$ which is flat around $p \in \mR P^n$ such that $L_g$ is a positive 
operator. 
Using the two-fold covering $S^n\to\mR P^n$ one 
obtains as in the last part of the proof of 
Theorem \ref{theorem_pmt} that the mass 
of~$L_g$ at~$p$ is strictly positive. 
\end{proof}

\end{document}